\newcommand{\A}{\mathbf A}
\newcommand{\B}{\mathbf B}
\newcommand{\x}{\mathbf x}
\newcommand{\y}{\mathbf y}
\newtheorem{definition}{Definition}
\newtheorem{lemma}{\bf Lemma}
\newtheorem{proposition}{\bf Proposition}
\newtheorem{theorem}{\bf Theorem}
\newtheorem{corollary}{\bf Corollary}
\renewenvironment{proof}{\noindent {\bf Proof: }}{\rm\\}
\theoremstyle{definition}
\newtheorem{remark}{Remark}{\rm}
\newtheorem{example}{Example}{\rm}
\renewcommand{\p@algorithm}{\arabic{algorithm}\expandafter\@gobble}
\newcounter{step}[algorithm]
\newcommand\STEP[2][\(\triangleright\)]{%
	\refstepcounter{step}
	\vskip 0.25\baselineskip
	\item[]\hskip -\algorithmicindent #1 \textbf{Step \arabic{step}}%
	\ifthenelse{\equal{\unexpanded{#2}}{}}{}{ (\texttt{#2})}%
	\textbf{.}%
}
\def\algo#1\end{%
	\noindent\fbox{%
	\begin{minipage}[b]{\dimexpr\columnwidth-\algorithmicindent\relax}
	\begin{algorithmic}
	#1
	\end{algorithmic}
	\end{minipage}
	}%
\end}
\renewcommand{\fnum@algorithm}{\fname@algorithm}
\begin{document}
\pagestyle{plain}

\title{Alternating projections with applications to Gerchberg-Saxton error reduction$^1$}
\author{Dominikus Noll$^{2}$}
\thanks{$^1$Dedicated to R.T. Rockafellar on the occasion of his 85th anniversary}
\thanks{$^2$Institut de Math\'ematiques, Universit\'e de Toulouse, France}
\date{}

\begin{abstract}
We consider convergence of alternating projections between non-convex sets and obtain applications to convergence of
the Gerchberg-Saxton error reduction method, of the Gaussian expectation-maximization algorithm, and of Cadzow's algorithm.
\\

\noindent
{\bf Key words:} 
Alternating projections $\cdot$ subanalytic sets $\cdot$ phase retrieval $\cdot$ Gerchberg-Saxton $\cdot$ Douglas-Rachford
$\cdot$ Gaussian EM-algorithm $\cdot$ Cadzow algorithm
\end{abstract}
\maketitle

\section{Introduction}
We consider convergence of
alternating projections $a_k \in P_A(b_{k-1})$, $b_k\in P_B(a_k)$ between closed sets $A,B\subset \mathbb R^n$, where
$P_A,P_B$ are the potentially set-valued orthogonal projectors on $A,B$. 
Since their invention
\cite{schwarz} alternating projections have been understood as an algorithmic solution to the
feasibility problem of finding points $x^*\in A \cap B$.  In the infeasible
case $A \cap B=\emptyset$, alternating projections are still interpreted  as of providing generalized solutions
realizing the gap between $A$ and $B$. 

It is well-known \cite{bauschke-survey} that bounded alternating sequences converge
if $A,B$ are closed convex, while convergence may fail already if one of the sets is
non-convex. If $a_k,b_k$ are bounded and satisfy $a_k-a_{k-1}\to 0$,
$b_k-b_{k-1}\to 0$  as $k \to \infty$, then by Ostrowski's theorem  the sets $A^*,B^*$ of accumulation points of the $a_k,b_k$ are compact continua. 
This includes the 
singleton case $A^*=\{a^*\}, B^*=\{b^*\}$ with convergence, but allows examples where
$A^*,B^*$ are non-singleton. The first cases of failure of convergence with non-singleton $A^*=B^*\subset A \cap B$
were constructed in \cite{bauschke-noll} and 
\cite{douglas}.

In the feasible case $A \cap B \not=\emptyset$ local convergence of alternating projections was
established under transversality hypotheses
in \cite{malick,luke,bauschke1,bauschke2,obsolete,hesse}, where the speed of convergence is linear.
Convergence for cactus sets without transversality 
was proved in \cite{bauschke-noll}, and the case of tangential intersection was addressed in \cite{initial,noll}.
General convergence conditions are given in \cite{csiszar}, but are difficult to check in practice. 
The Kurdyka-\L ojasiewicz (KL) circle of ideas plays a crucial role in the approach \cite{noll},
and there had previously been results for related projection based methods in
\cite{attouch}. In \cite{chinese} the approach of \cite{noll} and the KL-property is used to address the infeasible case,
where the authors do not focus on geometric properties of the sets $A,B$, but
on properties of the sequence $a_k,b_k$ directly.

In this work we show that the infeasible case can be covered by suitably adapting the approach of \cite{noll}.
This gives convergence under geometric conditions
in terms of $A,B$.

A central concern of this work is application of alternating projections to
the Gerchberg-Saxton error reduction method \cite{gerchberg},  introduced  in 1972. This classical tool for phase retrieval has been used 
successfully 
for more than 40 years without convergence certificate.
The first
convergence proof ever appeared in 2013 in \cite{initial,noll}, 
addressing the feasible case and including subanalytic sets.
Here we give the  first convergence proof covering also the infeasible case, providing criteria which can often be
checked in practice.

It turns out that not only had Gerchberg-Saxton error reduction been used without theoretical convergence certificates for decades,
neither had the question ever been raised whether there could be cases where convergence fails.
We therefore supplement a first counterexample, showing that Gerchberg-Saxton
error reduction may indeed fail to converge even in the feasible case if only the prior information set is sufficiently irregular. 

We end with a glimpse on the EM-algorithm, where the situation is not unlike in phase retrieval, inasmuch
as since the 1970s a satisfactory convergence theory outside the realm of convexity is missing. For variants of the
EM-algorithm which are realizations of alternating projections, we can prove convergence without convexity.
Our findings also concern the speed of convergence, which is shown to be sublinear.

The structure of the paper is as follows. After the preparatory Sections \ref{sect_prep}, \ref{sect_local},  Sections \ref{sect_angle}, \ref{sect_holder}, \ref{sect_reach}, \ref{sect_three}
adapt notions developed for the feasible case in \cite{noll} to address the infeasible case. Section \ref{sect_convergence}
gives the central convergence result. Gerchberg-Saxton error reduction is discussed in Section \ref{sect_GS},
counterexamples for the Gerchberg-Saxton and Hybrid-Input-Output (HIO) algorithms are constructed in Sections \ref{spiral}, \ref{sect_hio}. The Gaussian
EM-algorithm is given attention in Section \ref{EM}, and Cadzow's algorithm in Section \ref{sect-cadzow}.

\section*{Notation}
Notions from nonsmooth analysis are covered by \cite{rock,mord}. Euclidean balls are denoted
$\mathcal B(x,\delta)$, and $\mathcal N(A,\delta)=\{x\in \mathbb R^n: d_A(x)\leq \delta\}$ is the Euclidean
$\delta$-neighborhood of a set $A$.
The proximal normal
cone to $A$ at $a\in A$ is 
$N_A^p(a)=\{\lambda u: \lambda \geq 0, a \in P_A(a+u)\}$,
the normal cone 
is the set 
$N_A(a)$ of $v$ for which there exist $a_k\in A$ with $a_k \to a$ and $v_k\in N_A^p(a_k)$
such that $v_k\to v$. The Fr\'echet normal cone $\widehat{N}_A(a)$ to $A$ at $a\in A$
is the set of $v$ for which $\limsup_{A \ni a'\to a} \frac{\langle v,a'-a\rangle}{\|a'-a\|}\leq 0$; cf. \cite[(1.2)]{mord}.
We have $N_A^p(a)\subset \widehat{N}_A(a)\subset N_A(a)$; cf. {\cite[Chapter 2.D and (1.6)]{mord} or \cite[Lemma 2.4]{bauschke1}}.
The proximal subdifferential $\partial_pf(x)$ of a lower semi-continuous
function $f$ at $x\in {\rm dom}f$ is the set of vectors 
$v\in \mathbb R^n$ such that $(v,-1)\in N^p_{{\rm epi}f}(x,f(x))$; \cite[(2.81)]{mord}. The 
subdifferential $\partial f(x)$ of $f$ at $x\in {\rm dom} f$ is the set of $v$
satisfying $(v,-1)\in N_{{\rm epi} f}(x,f(x))$. 
The Fr\'echet subdifferential 
$\widehat{\partial}f(x)$ at $x\in {\rm dom} f$ is the set of $v\in \mathbb R^n$ such that
$(v,-1)\in \widehat{N}_{{\rm epi}f}(x,f(x))$, cf.  \cite[(1.51)]{mord}. The indicator function  of a set $A$ is $i_A$,
the distance to $B$ is $d_B$. We have the following

\begin{lemma}
\label{first}
Let $r^*\geq 0$,  $f = i_A + \frac{1}{2}(d_B - r^*)^2$, $a^+\in A$, $v = \lambda(b-a^+) \in N_A^p(a^+)$, where $b \in B$, $\lambda \geq 0$. Then
$v + \frac{d_B(a^+)-r^*}{d_B(a^+)}(a^+-P_B(a^+) )\subset \widehat{\partial} f(a^+)$.
\hfill $\square$
\end{lemma}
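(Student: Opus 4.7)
The plan is to verify the membership directly from the definition of the Fr\'echet subdifferential. Fix an arbitrary $p\in P_B(a^+)$, set $d:=d_B(a^+)$ and $w_p:=\frac{d-r^*}{d}(a^+-p)$; the target is to show
\[
\liminf_{y\to a^+}\frac{f(y)-f(a^+)-\langle v+w_p,y-a^+\rangle}{\|y-a^+\|}\geq 0.
\]
Decompose $f=i_A+F$ with $F:=\tfrac12(d_B-r^*)^2$. For $y\notin A$ the quotient is $+\infty$ and there is nothing to prove, so all the work concerns $y\in A$ close to $a^+$, where $f(y)=F(y)$.

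The first step disposes of the $v$--contribution. Since $v=\lambda(b-a^+)\in N_A^p(a^+)$, the standard characterization of the proximal normal cone provides $\sigma\geq 0$ (one can take $\sigma=\lambda/2$) such that $\langle v,y-a^+\rangle \leq \sigma\|y-a^+\|^2$ for every $y\in A$ close to $a^+$; in particular $\langle v,y-a^+\rangle=o(\|y-a^+\|)$ on $A$, and this term is absorbed into the remainder. The problem thus reduces to establishing the one-variable inequality
\[
F(y)-F(a^+)\geq \langle w_p,y-a^+\rangle+o(\|y-a^+\|),\qquad y\to a^+.
\]

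The second step exploits the elementary algebraic identity
\[
F(y)-F(a^+)=(d-r^*)\bigl(d_B(y)-d\bigr)+\tfrac12\bigl(d_B(y)-d\bigr)^2,
\]
combined with the $1$-Lipschitz property $|d_B(y)-d|\leq\|y-a^+\|$, so that the quadratic remainder is $O(\|y-a^+\|^2)=o(\|y-a^+\|)$. To process the linear part I would use that $p\in B$ implies $d_B(y)\leq\|y-p\|$ and Taylor-expand
\[
\|y-p\|=d+\tfrac{1}{d}\langle a^+-p,y-a^+\rangle+O(\|y-a^+\|^2),
\]
which, inserted into $(d-r^*)(d_B(y)-d)$, produces precisely $\langle w_p,y-a^+\rangle=\tfrac{d-r^*}{d}\langle a^+-p,y-a^+\rangle$ up to an $o(\|y-a^+\|)$ error.

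The main obstacle is the direction of the inequality: the one-sided estimate $d_B(y)\leq\|y-p\|$ must translate into a \emph{lower} bound on $F(y)-F(a^+)-\langle w_p,y-a^+\rangle$, and the sign of $d-r^*$ controls which way this passes through. The cleanest route is to invoke the convex lower bound $F(y)-F(a^+)\geq (d-r^*)(d_B(y)-d)$ coming from convexity of $t\mapsto\tfrac12(t-r^*)^2$, and then split cases on the sign of $d-r^*$ to conclude. Once this sign-tracking is done, the quadratic remainder and the bound on $\langle v,y-a^+\rangle$ combine to give the required $o(\|y-a^+\|)$ estimate, completing the verification $v+w_p\in\widehat{\partial}f(a^+)$ for every $p\in P_B(a^+)$.
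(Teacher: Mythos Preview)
Your approach differs from the paper's: instead of citing $\frac{a^+-P_B(a^+)}{d}\subset\widehat{\partial}d_B(a^+)$ from \cite{mord,rock} and then applying the chain rule together with the Fr\'echet sum rule $\widehat{\partial}i_A(a^+)+\widehat{\partial}F(a^+)\subset\widehat{\partial}f(a^+)$, you attempt a direct verification from the definition. Where it works this is more elementary; in particular your handling of the $v$-term via the proximal-normal inequality, and of the case $d\le r^*$, is fine.

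There is, however, a genuine gap in the case $d>r^*$, which is exactly the case used throughout the paper. After dividing by $d-r^*>0$ you are left needing the \emph{lower} bound
\[
d_B(y)-d\;\ge\;\tfrac{1}{d}\langle a^+-p,\,y-a^+\rangle+o(\|y-a^+\|),
\]
and none of the tools you list produce it: the estimate $d_B(y)\le\|y-p\|$ and its Taylor expansion point the wrong way, the $1$-Lipschitz property gives only $d_B(y)\ge d-\|y-a^+\|$, and the convexity of $t\mapsto\tfrac12(t-r^*)^2$ merely transfers the problem from $F$ to $d_B$ without reversing the inequality. The displayed estimate \emph{is} the statement $\frac{a^+-p}{d}\in\widehat{\partial}d_B(a^+)$, i.e.\ precisely the nontrivial input the paper takes from the literature. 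So ``splitting on the sign of $d-r^*$'' does not close the argument; you still owe this lower estimate on $d_B$. Note, moreover, that it genuinely fails when $P_B(a^+)$ is not a singleton (take $B=\{-1,1\}\subset\mathbb R$, $a^+=0$, $r^*=0$, where $\widehat{\partial}d_B(0)=\emptyset$), so any direct proof along your lines will have to build in that hypothesis or something equivalent.
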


\begin{proof}
By \cite[Cor. 1.96]{mord} or \cite[p. 340]{rock} we have $\frac{a^+-P_B(a^+)}{\|a^+-P_B(a^+)\|} \in \widehat{\partial} d_B(a^+)$, hence by the chain rule
$(d_B(a^+)-r^*)  \frac{a^+-P_B(a^+)}{\|a^+-P_B(a^+)\|} \in \widehat{\partial} \frac{1}{2}(d_B-r^*)^2 (a^+)$. Since
$\widehat{\partial}{i}_A(a^+)=\widehat{N}_A(a^+)$ by \cite[Prop. 1.79]{mord}, we have $v\in N^p_A(a^+)\subset \widehat{N}_A(a^+)\subset \widehat{\partial}i_A(a^+)$,
and by the sum rule \cite[Lemma 2.4]{kruger} we have $\widehat{\partial} i_A(a^+) + \widehat{\partial} \frac{1}{2}(d_B-r^*)^2(a^+) \subset \widehat{\partial} f(a^+)$,
which completes the proof.
\hfill $\square$
\end{proof}

The importance of $f$ in KL-theory is well-known. See for instance \cite{attouch,bolte_new,csiszar,noll,obsolete}.

\section{Preparation}
\label{sect_prep}
Given nonempty closed sets $A,B\subset \mathbb R^n$, we consider sequences of alternating projections
$b_k\in P_B(a_k)$, $a_{k+1}\in P_A(b_k)$, where $P_A,P_B$ are the possibly set-valued orthogonal projectors on $A,B$.
We use the notation
\[
a_k \to b_k \to a_{k+1}, \quad b_{k-1} \to a_k \to b_k
\]
for the building blocks of the alternating sequence, and sometimes the index free notation
$a \to b \to a^+$ and $b \to a^+ \to b^+$ introduced in \cite{noll}. If a projection is single-valued, we write $b=P_B(a)$.

For a bounded alternating sequence $a_k \to b_k \to a_{k+1}$ let $A^*$, $B^*$ be the set of accumulation points of the $a_k$, $b_k$,
and  $r^* = \inf\{ \|a_k-b_k\| : k\in \mathbb N\}$,  then we call
$(A^*,B^*,r^*)$ the {\em gap of the alternating sequence}. For every $a^*\in A^*$ there exists $b^*\in B^* \cap P_B(a^*)$ with $\|a^*-b^*\|=r^*$, and
vice versa, for every $b^*\in B^*$ we find $a^*\in A^* \cap P_A(b^*)$ with $\|b^*-a^*\| = r^*$. 
We are interested in those cases where the sequences $a_k,b_k$ converge $a_k \to a^*$, $b_k\to b^*$ i.e.,
$A^* = \{a^*\}$, $B^* = \{b^*\}$.  In the alternative, if this fails,  we would hope that at least one of the sequences
converges. The case $r^*=0$ treated in \cite{noll}
is referred to as the  feasible  case. Here convergence of one of the sequences $a_k$ or $b_k$
implies convergence of the other, but this may no longer be true in the infeasible case $r^*>0$.

In \cite{malick}, and subsequently  in
\cite{luke,bauschke1,bauschke2,obsolete,hesse,noll}, the following point of view is taken: Given a point $x^* \in A \cap B$, find conditions
under which {\em any} alternating sequence, once it gets sufficiently close to $x^*$, is captured and forced to converge to some point in the intersection.
Here we investigate under which conditions a similar local attraction phenomenon may occur in the infeasible case $r^*>0$. 

Given subsets $A^*\subset A$, $B^*\subset B$ and $r^*\geq 0$, we say that
$(A^*,B^*,r^*)$ is a gap between $A$ and $B$, or simply a gap, 
if for every $a^*\in A^*$ there exists $b^*\in B^*$ with $b^*\in P_B(a^*)$ and $\|a^*-b^*\|=r^*$, and
vice versa, for every $b^*\in B^*$ there exists $a^*\in A^*$ with $a^*\in P_A(b^*)$ and $\|a^*-b^*\|=r^*$. 
The question is then the following: Suppose an alternating sequence gets close to that gap
in the sense that $a_k$ is close to $A^*$, $b_k$ is close to $B^*$, and $r^* < \|a_k - b_k\| < r^*+\eta$ for some small $\eta > 0$,
will this sequence be captured and forced to
converge $a_k \to a^*$, $b_k \to b^*$, with $\|a^*-b^*\| = r^*$, realizing that gap?

\section{Local alternating projections}
\label{sect_local}
Despite the absence of a satisfactory convergence theory, non-convex alternating projections had been used on a 
purely experiment basis for decades. With \cite{noll} many of these heuristics have now a sound theoretical basis,
but occasional experiments would suggest to go a little further and include cases, where
projections are computed only locally. This point of view will now be given consideration.

We say that $a^+ \in A$ is a local projection of $b\in B$ onto $A$ if there exists a neighborhood $V$ of $a^+$ such that
$a^+ \in P_{A\cap V}(b)$. In other words, there might be points in $A$ closer to $b$ than $a^+$, but not in the neighborhood $V$ of $a^+$.
Now in this situation there exists a point $c \in (b,a^+)$, sufficiently close to $a^+$, 
such that $a^+ = P_A(c)$. But then by Lemma \ref{first}, 
$v + \frac{d_B(a^+)-r^*}{d_B(a^+)}(a^+-P_B(a^+) )\subset \widehat{\partial} f(a^+)$,
where as before
$f = i_A+\frac{1}{2}(d_B-r^*)^2$. Since $a^++\mathbb R^+ (b-a^+) = a^++\mathbb R^+(c-a^+)$, we have
$\lambda(b-a^+) + \frac{d_B(a^+)-r^*}{d_B(a^+)}(a^+-P_B(a^+) )\subset \widehat{\partial} f(a^+)$
for every $\lambda \geq 0$. 
In consequence, we have the following extension of Lemma \ref{first}:

\begin{lemma}
Suppose $a^+\in A$ is a local projection from $b\in B$, and $b^+\in P_B(a^+)$. Then
$\lambda(b-a^+)+\frac{d_B(a^+)-r^*}{d_B(a^+)}(a^+-b^+) \in \widehat{\partial} f(a^+)$. 
\hfill $\square$
\end{lemma}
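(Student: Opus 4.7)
The plan is to reduce the local projection situation to the global projection setting covered by Lemma \ref{first}. The key observation is that if $a^+\in A$ is merely a local projection of $b$ onto $A$, one can slide from $b$ toward $a^+$ along the segment and reach a point for which $a^+$ becomes a \emph{global} projection, so that the proximal normal cone inclusion used in Lemma \ref{first} becomes available.

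Concretely, I would fix a neighborhood $V$ of $a^+$ with $a^+ \in P_{A\cap V}(b)$ and consider $c_t = (1-t)a^+ + tb$ for small $t \in (0,1]$. To show $a^+ = P_A(c_t)$ for $t$ sufficiently small, I would argue by contradiction: any competitor $a'\in A$ with $\|a'-c_t\| < \|a^+-c_t\| = t\|b-a^+\|$ would satisfy
\[
\|a'-a^+\| \leq \|a'-c_t\| + \|c_t-a^+\| < 2t\|b-a^+\|,
\]
which forces $a'\in V$ once $t$ is small enough. But then the triangle inequality gives
\[
\|a'-b\| \leq \|a'-c_t\| + \|c_t-b\| < t\|b-a^+\| + (1-t)\|b-a^+\| = \|b-a^+\|,
\]
contradicting $a^+\in P_{A\cap V}(b)$. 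Hence $a^+ = P_A(c_t)$, so by the definition of the proximal normal cone, $\mu(c_t-a^+) \in N_A^p(a^+)$ for every $\mu\geq 0$. Since $c_t-a^+ = t(b-a^+)$, this gives $\lambda(b-a^+)\in N_A^p(a^+)$ for every $\lambda\geq 0$.

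Now I would apply Lemma \ref{first} with this $v=\lambda(b-a^+)$ and with $b^+\in P_B(a^+)$ playing the role of the projection: the lemma directly yields
\[
\lambda(b-a^+) + \frac{d_B(a^+)-r^*}{d_B(a^+)}\bigl(a^+-b^+\bigr) \in \widehat{\partial} f(a^+),
\]
which is the claimed inclusion. The only mildly delicate step is the first one, ensuring that $a^+$ becomes a global projection of a nearby point on the segment; the rest is a direct invocation of the previous lemma together with the positive homogeneity of $N_A^p(a^+)$ in the radial direction $b-a^+$.
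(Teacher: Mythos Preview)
Your proposal is correct and follows essentially the same route as the paper: slide from $b$ toward $a^+$ along the segment to a point $c$ for which $a^+$ is a global projection, so that $b-a^+\in N_A^p(a^+)$, and then invoke Lemma~\ref{first}. The only difference is that the paper merely asserts the existence of such a $c$, whereas you supply the explicit contradiction argument establishing it.
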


\begin{definition}
A sequence $a_k\in A$, $b_k\in B$ with $\|a_k - b_{k-1}\| \leq \|a_{k-1}-b_{k-1}\|$, $b_k\in P_B(a_k)$, 
and  $a_k$ 
a local projection of $b_{k-1}$, noted
\begin{equation}
\label{local}
a_{k} \to b_{k} \stackrel{\ell}{\to} a_{k+1}, \quad b_{k-1} \stackrel{\ell}{\to} a_k \to b_k,
\end{equation}
is called a local
alternating sequence of projections.
\end{definition}

\begin{remark}
Our definition of local alternating sequence $a_k\to b_k\stackrel{\ell}{\to} a_{k+1}$ has to require that the distance is decreasing, while this is automatically
true for traditional alternating sequences. Note also that (\ref{local}) breaks the symmetry between $A$ and $B$.
\end{remark}

\begin{remark}
The definition of a local projection is convenient, because in applications the projection
on one of the sets often requires solving a non-linear and non-convex optimization
program $\min \{d_A(b): b \in B\}$, and finding a global minimum might be hard. On the other hand,
a local solver using a descent method started at the last projected point $b\in B$ will obviously
lead to a local projection $a^+\in A$ satisfying $\|a^+-b\| < \|a-b\|$. 
Naturally, for convex $A$ local projections are just ordinary projections.
\end{remark}

Lemma \ref{first} suggest going even one step further. 
We do not need $a^+\in A$ to be a local projection from $b\in B$. What is needed is $b-a^+\in N_A^p(a^+)$.
This leads to the following:

\begin{definition}
A sequence $a_k\in A$, $b_k\in B$ with
$\|b_{k-1}-a_k\| \leq \|b_{k-1}-a_{k-1}\|$, $b_k\in P_B(a_k)$, and $b_{k-1}-a_k \in N_A^p(a_k)$
is called a prox-alternating sequence of projections, noted
\begin{equation}
    \label{prox-building}
a_{k}\to b_{k} \stackrel{p}{\to} a_{k+1}, \quad b_{k-1} \stackrel{p}{\to} a_k \to b_k.
\end{equation}
\end{definition}

\begin{remark}
Clearly every alternating sequence is local alternating, and every local alternating sequence is
a prox-alternating. For convex $A,B$ those all coincide.
\end{remark}

\begin{remark}
Let $a_k\to b_k \stackrel{p}{\to} b_{k+1}$ be a bounded prox-alternating sequence, 
$A^*,B^*$ the sets of accumulation points of the $a_k,b_k$
with gap value $r^*=\inf \{ \|a_k-b_k\|: k \in \mathbb N\}$.  Define
\begin{equation}
    \label{s}
A^s = \{a_k: k \in \mathbb N\} \cup A^*, \quad B^s = \{b_k: k\in \mathbb N\} \cup B^*.
\end{equation}
Then $a_k,b_k$ is converted into a traditional alternating sequence between $A^s,B^s$, where 
$P_{B^s}(a_k) = b_k\in P_B(a_k)$, but where the projection $P_{A^s}(b_{k-1}) = P_{A\cap V}(b_{k-1}) = a_k$, which was local for $A$,
is now rendered global for $A^s$, because points in
$A$ which might make the projection $b_{k-1} \to a_k$ a local one have been removed from $A^s$. We continue to
call $(A^*,B^*,r^*)$ the gap of the prox-alternating sequence.
\end{remark}

\begin{theorem}
\label{critical}
Let $a_k\in A,b_k\in B$ be a bounded prox-alternating sequence with gap $(A^*,B^*,r^*)$. 
Then every $a^*\in A$ is a critical point of $f=i_A+\frac{1}{2}(d_B-r^*)^2$.
When $r^* > 0$ and  $b_{k-1}-b_k \to 0$, then 
$a^*\in A^*$ is also a critical point of $g=i_A+\frac{1}{2}d_B^2$. 
\end{theorem}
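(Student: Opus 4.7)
The plan is to read off the requisite subgradient inclusion from the lemma preceding the prox-alternating definition and then pass to the limit along a subsequence. A prox-alternating step $b_{k-1}\stackrel{p}{\to}a_k\to b_k$ satisfies $b_{k-1}-a_k\in N_A^p(a_k)$ and $b_k\in P_B(a_k)$, and the argument that produced Lemma \ref{first} (chain rule for $\widehat{\partial}\tfrac12(d_B-r^*)^2$ combined with the Fr\'echet sum rule) delivers, for every $\lambda\geq 0$ and every $k$,
$$w_k(\lambda)=\lambda(b_{k-1}-a_k)+\frac{d_B(a_k)-r^*}{d_B(a_k)}\bigl(a_k-b_k\bigr)\in\widehat{\partial}f(a_k).$$
The identical derivation with $r^*$ formally replaced by $0$ yields the companion inclusion
$$u_k(\lambda)=\lambda(b_{k-1}-a_k)+\bigl(a_k-b_k\bigr)\in\widehat{\partial}g(a_k).$$

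For the first assertion I specialize to $\lambda=0$. Given $a^*\in A^*$, pass to a subsequence $a_{k_j}\to a^*$ and refine so that $b_{k_j}\to b^*$ with $\|a^*-b^*\|=r^*$; then $d_B(a_{k_j})=\|a_{k_j}-b_{k_j}\|\to r^*$, so the scalar $1-r^*/d_B(a_{k_j})$ (read as $1$ when $r^*=0$) applied to the bounded vector $a_{k_j}-b_{k_j}$ tends to $0$. Hence $w_{k_j}(0)\to 0$. Since iterates and limit all lie in $A$ and $d_B$ is continuous, $f(a_{k_j})\to f(a^*)$, and the closure property of the limiting subdifferential gives $0\in\partial f(a^*)$.

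For the second assertion I use the $g$-inclusion with the telescoping choice $\lambda=1$, which collapses $u_k(1)$ to the residual $b_{k-1}-b_k$. The added hypothesis is precisely that this residual vanishes, so $u_{k_j}(1)\to 0$ along the same subsequence. Because $g(a_{k_j})=\tfrac12 d_B(a_{k_j})^2\to\tfrac12 r^{*2}=g(a^*)$, the same closure property yields $0\in\partial g(a^*)$. The hypothesis $r^*>0$ only serves to distinguish this from the first claim, since for $r^*=0$ one has $f=g$ and the first part already covers it.

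The only delicate point is the functional continuity $f(a_{k_j})\to f(a^*)$ (respectively $g(a_{k_j})\to g(a^*)$) required by the outer-limit definition of the limiting subdifferential; this is automatic because the iterates and their limits lie in $A$ and $d_B$ is $1$-Lipschitz. I expect no further obstacle.
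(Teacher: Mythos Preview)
Your argument is correct, and for the second claim it coincides with the paper's: both write $b_{k-1}-a_k+(a_k-b_k)=b_{k-1}-b_k\in\widehat{\partial}g(a_k)$ via Lemma~\ref{first} with $r^*=0$, then pass to the limiting subdifferential along a subsequence using $b_{k-1}-b_k\to 0$.

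For the first claim, however, the paper takes a much shorter route. Since $d_B(a^*)=r^*$ for every $a^*\in A^*$, one has $f(a^*)=0$, while $f(a)=\tfrac12(d_B(a)-r^*)^2\geq 0$ for all $a\in A={\rm dom}\,f$; hence $a^*$ is a global minimizer of $f$ and therefore critical. Your limiting argument via $w_{k_j}(0)\to 0$ works too, but it spends Lemma~\ref{first} and a closure step on what is in fact a one-line observation. The advantage of the paper's approach is brevity and robustness (no need to worry about the case $d_B(a_k)=0$ in the fraction); the advantage of yours is that it parallels the second part and makes the role of Lemma~\ref{first} uniform across both claims.
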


\begin{proof}
Every $a^*$ is a global minimum of $f$, hence a critical point.
Consider $g$ for the case $r^*>0$.
From Lemma \ref{first} we get
$b_{k-1}-a_k + a_k- P_B(a_k) \subset \widehat{\partial} g(a_k)$. 
Select an infinite subsequence $k\in \mathcal K$ such that $b_{k-1}\to b^*$, $a_k\to a^*$, $k\in \mathcal K$,
then also $b_k \to b^*$, using the hypothesis $b_{k-1}-b_k\to 0$. 
Then $b_{k-1} - a_k + a_k - b_k \in b_{k-1}-a_k + a_k - P_B(a_k) \subset \widehat{\partial} g(a_k)$,
hence $0=b^*-a^*+a^*-b^* \in \partial g(a^*)$, where $\partial g(a^*)$ is the limiting subdifferential. 
\hfill $\square$
\end{proof}

\section{Angle condition}
\label{sect_angle}
We extend the angle condition introduced in \cite{noll} for the feasible case to the general 
case $r^* \geq 0$ and to prox-alternating sequences.

\begin{definition}
{\bf (Angle condition)}.
We say that the gap $(A^*,B^*,r^*)$ satisfies the angle condition with constant $\gamma > 0$ and exponent
$\omega \in [0,2)$, if there exist neighborhoods
$U$ of $B^*$ and $V$ of $A^*$ such that for every building block $b\stackrel{p}{\to} a^+\to b^+$ with $r=\|a^+-b^+\| > r^*$
and $a^+\in V$, $b^+\in U$, the estimate
\begin{equation}
\label{angle1}
\frac{1-\cos \alpha}{(r-r^*)^\omega} \geq \gamma
\end{equation}
holds
for the angle $\alpha = \angle(b-a^+,b^+-a^+)$.
\end{definition}

\begin{remark}
The interpretation of (\ref{angle1}) is that if the angle $\alpha$ between consecutive projection steps wants to get
close to 0 as the alternating sequence approaches the gap, then this
decrease has to be controlled by the speed with which the alternating sequence approaches the gap value $r^*$. Condition (\ref{angle1})
is strongest for $\omega=0$, and becomes  less binding as $\omega$ approaches 2. Values beyond 2 are too weak to be of interest.
The case $\omega=0$ is allowed, and here the angle
$\alpha$ stays away from $0$. 
\end{remark}

\begin{remark}
In \cite{noll} the condition was formulated for the feasible case $(\{x^*\},\{x^*\},0)$, where $x^*\in  A \cap B$.
Note that the angle condition breaks the symmetry. If we want to use the corresponding
condition for building blocks $a\stackrel{p}{\to} b\to a^+$, then we have to refer to a gap $(B^*,A^*,r^*)$.
\end{remark}

\begin{remark}
In the feasible case the sets $A,B$ intersect at $x^*$, and in \cite{noll} the term separable intersection, or intersection {\it at an angle},  was employed synonymously with
the term angle condition with $\omega=0$.
One could also refer to this as tangential intersection, as opposed to transversal intersection, or intersection
{\em at an angle}. 
\end{remark}

\begin{definition}
\label{def_loja}
{\bf (\L ojasiewicz inequality)}.
Let $f:\mathbb R^n \to \mathbb R \cup\{\infty\}$ be lower semi-continuous with closed domain
such that $f|_{{\rm dom} f}$ is continuous. We say that $f$ satisfies the \L ojasiewicz inequality
with exponent $\theta \in [0,1)$ at the critical point
$x^*$ of $f$ if there exists $\gamma > 0$, $\eta > 0$, and a neighborhood $V$ of $x^*$ such that
$(f(x)-f(x^*))^{-\theta} \|g\| \geq \gamma$ for every $x\in V$ with $f(x^*) < f(x) < f(x^*)+\eta$ and every $g\in {\partial} f(x)$.
\end{definition}

Here $x^*$ is critical in the sense of the limiting
subdifferential, see \cite{mord,rock}.   Note that we expect values $\theta \in [\frac{1}{2},1)$. Indeed, consider a real-analytic function $f$ of one variable
with a critical point at $x^*$. If $f'(x^*)=\dots= f^{(N)}(x^*)=0$, $f^{(N+1)}(x^*) \not=0$, then the \L ojasiewicz inequality holds with $\theta=N/(N+1)$, so the best possible value is
$\theta = \frac{1}{2}$ for $N=1$.

\begin{remark}
Suppose $K^*$ is a compact set of critical points of $f$ with
$f(K^*)$ constant on $K^*$. If $f$ satisfies the \L ojasiewicz inequality at every $x^*\in K^*$, then
by a simple compactness argument there exists a neighborhood $V$ of $K^*$ and parameters $\theta$ and $\gamma,\eta > 0$, valid for the whole of $K^*$, 
for which the same estimate is satisfied.
\end{remark}

Let $(A^*,B^*,r^*)$ be a gap and
$a^*\in A$, $b^*\in B$ with $b^*-a^*\in N_A^p(a^*)$,  $b^*\in P_B(a^*)$. Let
$f=i_A+\frac{1}{2}(d_B-r^*)^2$, then  by  Lemma \ref{first} 
$a^*$ is a critical point of $f$. Since the domain $A$ of $f$ is closed,
$f$ is amenable to Definition \ref{def_loja}. We deduce the following


\begin{lemma}
\label{second}
Let $(A^*,B^*,r^*)$ be a gap with compact $A^*$ and
suppose $f = i_A+\frac{1}{2}\left(d_B-r^*\right)^2$ satisfies the \L ojasiewicz
inequality with exponent $\theta \in [0,1)$ and constant $\gamma >0$ on $A^*$. 
Then $\theta \geq \frac{1}{2}$, and there exists a neighborhood $V$ of $A^*$ and $\eta > 0$
such that for every prox-building block $b\stackrel{p}{\to} a^+\to b^+$ with $a^+\in V$ and $r^* < r = \|a^+-b^+\| < r^* +\eta$ the angle condition 
\begin{equation}
    \label{angle}
\frac{1-\cos\alpha}{\left( \|a^+-b^+\|-r^{*}\right)^{4\theta-2}} \geq \gamma 
\end{equation}
is satisfied, where
$\alpha = \angle(b-a^+,b^+-a^+)$. 
\end{lemma}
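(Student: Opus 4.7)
My plan is to apply Lemma \ref{first} to produce an explicit one-parameter family of Fr\'echet subgradients at $a^+$, and then combine the \L ojasiewicz lower bound with a direct minimization over $\lambda\geq 0$ to extract the angle estimate.

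First I would note that $f(a^*)=0$ for every $a^*\in A^*$, since $d_B(a^*)=r^*$ by the gap condition. Compactness of $A^*$ together with the remark following Definition \ref{def_loja} then furnishes uniform constants: a neighborhood $V$ of $A^*$ and $\eta_0>0$ with $\|g\|\geq\gamma\,(f(x))^\theta$ for every $x\in V$ satisfying $0<f(x)<\eta_0$ and every $g\in\partial f(x)$. For a prox-building block $b\stackrel{p}{\to}a^+\to b^+$ with $r:=\|a^+-b^+\|>r^*$ one has $f(a^+)=\tfrac12(r-r^*)^2$, so fixing $\eta$ with $\tfrac12\eta^2\leq\eta_0$ keeps us in the \L ojasiewicz regime whenever $r<r^*+\eta$.

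For the core of the argument, because $b-a^+\in N_A^p(a^+)$ is a proximal normal, Lemma \ref{first} applies with $v=\lambda(b-a^+)\in N_A^p(a^+)$ for every $\lambda\geq 0$, yielding the family
\[
g_\lambda := \lambda(b-a^+)+\frac{r-r^*}{r}(a^+-b^+)\ \in\ \widehat{\partial}f(a^+)\subset\partial f(a^+),\qquad \lambda\geq 0.
\]
The choice $\lambda=0$ already gives $\|g_0\|=r-r^*$, so \L ojasiewicz forces $(r-r^*)^{1-2\theta}\geq\gamma\,2^{-\theta}$, which upon letting $r\downarrow r^*$ proves $\theta\geq 1/2$. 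To get the angle estimate I would then minimize $\|g_\lambda\|^2$ over $\lambda\geq 0$. Writing $u=b-a^+$, $w=a^+-b^+$, $c=(r-r^*)/r$, and using $\langle u,w\rangle=-\|u\|\,r\cos\alpha$, the unconstrained critical point is $\lambda^\star=cr\cos\alpha/\|u\|$; for $\cos\alpha\geq 0$ it is admissible and gives minimum value $(r-r^*)^2\sin^2\alpha$, while for $\cos\alpha<0$ the constrained minimum equals $(r-r^*)^2$ at $\lambda=0$. In either case $\min_{\lambda\geq 0}\|g_\lambda\|\geq (r-r^*)|\sin\alpha|$, so \L ojasiewicz yields $\sin^2\alpha\geq\gamma^2 4^{-\theta}(r-r^*)^{4\theta-2}$, and the identity $\sin^2\alpha=(1-\cos\alpha)(1+\cos\alpha)\leq 2(1-\cos\alpha)$ delivers the angle condition after relabeling the constant.

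The main technical obstacles I expect are keeping the \L ojasiewicz constants uniform over the compact set $A^*$, handled by the compactness remark, and the case split on the sign of $\cos\alpha$ in the constrained minimization; once these are dispatched, the algebraic core is routine.
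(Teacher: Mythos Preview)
Your approach is essentially the same as the paper's: both invoke Lemma~\ref{first} to produce the one-parameter family $g_\lambda\in\widehat{\partial}f(a^+)$, apply the \L ojasiewicz lower bound uniformly over $\lambda\geq 0$, minimize, and split into the cases $\cos\alpha\geq 0$ and $\cos\alpha<0$. Your derivation of $\theta\geq\tfrac12$ from the choice $\lambda=0$ is clean and in fact more explicit than what the paper writes.

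There is, however, a small logical slip in your unification step. You write ``In either case $\min_{\lambda\geq 0}\|g_\lambda\|\geq (r-r^*)|\sin\alpha|$, so \L ojasiewicz yields $\sin^2\alpha\geq\ldots$''. But the inequality $\min\geq (r-r^*)|\sin\alpha|$ goes the wrong way: combined with the \L ojasiewicz lower bound on $\min$, it tells you nothing about $\sin\alpha$. In the case $\cos\alpha\geq 0$ you have \emph{equality} $\min=(r-r^*)|\sin\alpha|$, and then the chain works. In the case $\cos\alpha<0$ you cannot bound $\sin^2\alpha$ from below this way; instead use directly that $1-\cos\alpha>1$ together with your already-established bound $(r-r^*)^{1-2\theta}\geq\gamma\,2^{-\theta}$ (squared, this gives $(r-r^*)^{2-4\theta}\geq\gamma^2\,4^{-\theta}$), which immediately yields $\dfrac{1-\cos\alpha}{(r-r^*)^{4\theta-2}}>\gamma^2\,4^{-\theta}$. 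This is exactly how the paper handles the obtuse case, and once you make this correction your argument is complete and matches the paper's.
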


\begin{proof}
The function $f = i_A + \frac{1}{2}(d_B-r^*)^2$ has constant value $0$ on $A^*$. By the definition of the \L ojasiewicz inequality there exists a neighborhood
$V$ of $A^*$ and $\gamma > 0$ such that every $a^+\in A \cap V$ with $r^* < d_B(a^*) < r^*+\eta$ satisfies
$$
f(a^+)^{-\theta} {\rm dist}\left(0,\widehat{\partial} f(a^+)\right) \geq \gamma.
$$
By Lemma \ref{first} this means
$$
2^\theta \left( d_B(a^+)-r^* \right)^{-2\theta} \left\|\lambda(b-a^+) + (d_B(a^+)-r^*) \frac{a^+-P_B(a^+)}{\|a^+-P_B(a^+)\|}\right\| \geq \gamma
$$
for every $\lambda \geq 0$. We deduce using  the substitution $\mu = \lambda \frac{\|a^+-P_B(a^+)\|}{d_B(a^+)-r^*}$ that
for every $b^+\in P_B(a^+)$
\begin{equation}
\label{new_ang}
2^\theta \frac{\left( d_B(a^+)-r^* \right)^{-2\theta+1}}{\|a^+-P_B(a^+)\|} \min_{\mu \geq 0} \left\|\mu(b-a^+) +  a^+-b^+\right\| \geq \gamma.
\end{equation}
Assume that the angle
$\alpha = \angle(b-a^+,b^+-a^+)$ is smaller than $90^\circ$, then the minimum in (\ref{new_ang}) is $\|a^+-b^+\|\sin \alpha$. Hence
$$
\frac{\sin \alpha}{\left(d_B(a^+)-r^*\right)^{2\theta-1}} \geq 2^{-\theta} \gamma.
$$
Since $1-\cos \alpha \geq \frac{1}{2} \sin^2\alpha$, we obtain
\begin{equation}
\label{29}
\frac{1-\cos \alpha}{(d_B(a^+)-r^*)^{4\theta-2}} \geq 2^{-2\theta-1} \gamma^2.
\end{equation}

Now for angles $\alpha > 90^\circ$ we have $\cos \alpha < 0$, hence $1-\cos \alpha > 1$. The minimum in (\ref{new_ang}) is now attained at
$\mu=0$,  with value $\|a^+-b^+\|$. Hence (\ref{new_ang}) implies $(d_B(a^+)-r^*)^{1-2\theta} \geq 2^{-\theta}\gamma$, 
hence $(d_B(a^+)-r^*)^{2-4\theta} \geq 2^{-2\theta} \gamma^2 > 2^{-2\theta-1} \gamma^2$, so that 
(\ref{29}) holds also in this case.
\hfill $\square$
\end{proof}

\begin{remark}
We do not 
expect exponents better than $\theta =\frac{1}{2}$  in Definition \ref{def_loja}, and hence in (\ref{angle}), and due to
$\omega = 4\theta-2$  this corresponds
to the best value $\omega=0$ in (\ref{angle1}).
As we shall later see, in the case $r^* > 0$ we even expect values $\theta \in [\frac{3}{4},1)$, or in terms of
(\ref{angle1}), values $\omega \geq 1$.
\end{remark}

\begin{remark}
For the best possible $\theta=\frac{1}{2}$ the denominator in (\ref{angle}) equals 1, so that the condition requires 
$\alpha$ to stay away from $0$. Here we expect linear convergence, and that will be proved in Theorem \ref{rate}. 
In the feasible case $r^*=0$ this was referred to in \cite{noll} as separable intersection.
\end{remark}

We now apply our findings to subanalytic sets. Recall that  $A \subset \mathbb R^n$ is
{\it semi-analytic} if for every $x'\in \mathbb R^n$
there exists an open neighborhood $V$ of $x'$ such that
\begin{equation}
\label{subanalytic}
A\cap V = \bigcup_{i\in I} \bigcap_{j\in J} \{x\in V: \phi_{ij}(x)=0,\psi_{ij} > 0\}
\end{equation}
for finite sets $I,J$ and real analytic functions $\phi_{ij},\psi_{ij} :V \to \mathbb R$.
A set $B\subset \mathbb R^n$ is {\it subanalytic} if for every $x'\in \mathbb R^n$ there exists a 
neighborhood $V$ of $x'$ and a bounded
semi-analytic set $A\subset \mathbb R^n \times \mathbb R^m$ for some $m$ such that
$B\cap V = \{x\in \mathbb R^n: \exists y\in \mathbb R^m \, (x,y)\in A\}$.
A function $f:\mathbb R^n \to \mathbb R \cup \{\infty\}$ is subanalytic if its graph is a subanalytic set in $\mathbb R^n \times \mathbb R$.

\begin{corollary}
Let $A,B$ be subanalytic sets, and let $a_k,b_k$ be a bounded prox-alternating sequence with gap $r^*$. Then
there exists an exponent $\theta\in [\frac{1}{2},1)$ and a constant
$\gamma > 0$, such that 
\[
\frac{1-\cos\alpha_k}{\left(\|a_k-b_k\|-r^{*}\right)^{4\theta-2}}\geq \gamma
\]
for $k\in \mathbb N$.
\end{corollary}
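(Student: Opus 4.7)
The plan is to deduce this from Lemma \ref{second} by verifying that $f = i_A + \frac{1}{2}(d_B - r^*)^2$ satisfies the \L ojasiewicz inequality uniformly on the compact accumulation set $A^*$, and then upgrading the resulting local angle estimate to a global one over the entire sequence.

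First I would argue that $f$ is a lower semi-continuous subanalytic function with closed domain $A$. Since $A$ is subanalytic, the indicator $i_A$ is subanalytic; since $B$ is subanalytic, the distance function $d_B$ is subanalytic on any bounded region, so $\frac{1}{2}(d_B - r^*)^2$ is subanalytic there. Hence $f$ is subanalytic on a bounded neighborhood of $A^*$, which is all that is needed. Invoking the Kurdyka--\L ojasiewicz inequality for subanalytic functions (Bolte--Daniilidis--Lewis), $f$ satisfies the \L ojasiewicz inequality with some exponent $\theta \in [0,1)$ at every critical point.

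By Theorem \ref{critical}, every $a^* \in A^*$ is a critical point of $f$, and $f \equiv 0$ on $A^*$. Boundedness of the alternating sequence makes $A^*$ nonempty and compact, so the compactness argument in the remark following Definition \ref{def_loja} produces a single neighborhood $V$ of $A^*$ together with uniform constants $\theta \in [0,1)$, $\gamma > 0$ and $\eta > 0$ for which the \L ojasiewicz inequality holds. Lemma \ref{second} now applies and yields $\theta \geq \frac{1}{2}$ together with the angle estimate (\ref{angle}) for every prox-building block $b_{k-1} \stackrel{p}{\to} a_k \to b_k$ satisfying $a_k \in V$ and $r^* < \|a_k - b_k\| < r^* + \eta$.

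It remains to upgrade this local statement to all $k \in \mathbb N$. The distance sequence $\|a_k - b_k\|$ is non-increasing, because $\|a_{k+1} - b_k\| \leq \|a_k - b_k\|$ by the definition of a prox-alternating sequence and $\|a_{k+1} - b_{k+1}\| \leq \|a_{k+1} - b_k\|$ since $b_{k+1} \in P_B(a_{k+1})$; as its infimum is $r^*$, it converges to $r^*$. Hence $\|a_k - b_k\| < r^* + \eta$ for all sufficiently large $k$; and since every accumulation point of $(a_k)$ lies in $A^*$, also $a_k \in V$ for all sufficiently large $k$. The angle inequality therefore holds from some index onward, and the finitely many remaining indices (for which numerator and denominator are positive) are accommodated by shrinking $\gamma$. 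The principal point requiring care is the appeal to subanalyticity of $f$ and the correct form of the \L ojasiewicz inequality for lsc subanalytic functions with closed domain; everything else is a direct application of the machinery established in Sections \ref{sect_local} and \ref{sect_angle}.
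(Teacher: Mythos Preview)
Your proposal is correct and follows essentially the same approach as the paper: establish subanalyticity of $f = i_A + \tfrac{1}{2}(d_B - r^*)^2$, invoke Theorem~\ref{critical} together with the \L ojasiewicz inequality for subanalytic functions (Bolte--Daniilidis--Lewis), and then apply Lemma~\ref{second}. You supply in addition the explicit passage from the neighborhood estimate of Lemma~\ref{second} to all $k \in \mathbb N$ via monotonicity of $\|a_k-b_k\|$ and compactness of $A^*$, which the paper leaves implicit.
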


\begin{proof}
Let $(A^*,B^*,r^*)$ be the gap of the alternating sequence. Then by Theorem \ref{critical} every $a^*\in A^*$ is a critical point
of $f = i_A+\frac{1}{2} (d_B-r^*)^2$. Since $A,B$ are subanalytic, so is $f$ (cf. \cite[Thm. 3]{noll}), and by \cite[Thm. 3.1]{bolte1} $f$ satisfies the \L ojasiewicz
inequality with the same exponent $\theta \in [\frac{1}{2},1)$ throughout $A^*$. Now the result follows from Lemma \ref{second}.
\hfill $\square$
\end{proof}

\begin{remark}
Let $A,B$ be subanalytic, and consider a prox-alternating sequence $a_k,b_k$. Then trivially the angle condition (\ref{angle}) still holds 
for the gap of the sequence, but now
with regard to
the sets $A^s,B^s$ in (\ref{s}). This is significant in so far as $A^s,B^s$ are defined recursively and have no reason to
be subanalytic. 
\end{remark}

\section{H\"older regularity}
\label{sect_holder}
We extend the notion of H\"older-regularity introduced for the feasible case in \cite{noll}.

\begin{definition}
\label{holder1}
{\bf (H\"older regularity)}.
We say that a gap $(A^*,B^*,r^*)$ is $\sigma$-H\"older regular with constant $c>0$  and exponent $\sigma \in (0,1)$
if there exist a neighborhood $V$ of $A^*$ and $\eta > 0$ such that
every building block $b\to a^+\to b^+$ with  $r=\|a^+-b^+\|$, $r^* < r < r^*+\eta$ and $a^+\in V$
satisfies:
\begin{equation}
    \label{holder1}
\mathcal{B}(a^+,(1+c)r) \cap \{b\in P_A^{-1}(a^+):
\langle a^+-b^+,b-b^+\rangle > \sqrt{c}r(r-r^*)^{\sigma}
\|b-b^+\|\} \cap B = \emptyset,
\end{equation}
or what is the same with the angle $\beta = \angle(a^+-b^+,b-b^+)$:
\begin{equation}
    \label{holder}
\mathcal{B}(a^+,(1+c)r) \cap\{b\in P_A^{-1}(a^+): \cos\beta >\sqrt{c}
(r-r^*)^\sigma\}\cap B=\emptyset.
\end{equation}
\end{definition}

\begin{remark}
Note the asymmetry in the definition. If we want $A^*,B^*$ to change roles, 
we say that the gap $(B^*,A^*,r^*)$ is $\sigma$-H\"older regular.
\end{remark}

\begin{remark}
The definition agrees with the notion of $\sigma$-H\"older-regularity of $B$ with respect to $A$ at $x^*\in A \cap B$ in
\cite{noll} when we take as gap $(\{x^*\},\{x^*\},0)$. Even in the feasible case this is already an asymmetric condition.
\end{remark}

\begin{remark}
Note that in the case $r^*=0$ the notion $\sigma$-H\"older regularity with $\sigma=0$ includes a very weak form of transversality generalizing
the transversality notions in
 \cite{malick,luke,bauschke1,bauschke2,obsolete,hesse}. Consequently, linear convergence results based on our concepts of
 $0$-H\"older regularity in tandem with $0$-separability are the strongest in this class.
\end{remark}

\begin{remark}
Let $a_k,b_k$ be an alternating sequence and let $A^*,B^*$ be the corresponding sets of accumulation points.
Suppose the gap $(A^*,B^*,r^*)$ is $\sigma$-H\"older regular with constant $c > 0$. Then trivially $(A^*,B^*,r^*)$
is also $\sigma$-H\"older regular with regard to the underlying sets $A^s$, $B^s$. This simply means that (\ref{holder1}) is only
required for the elements of the alternating sequence. 
\end{remark}

\begin{definition}
\label{holder2}
{\bf (H\"older regular sequence)}.
An alternating sequence is $\sigma$-H\"older regular with constant $c>0$ if the gap $(A^*,B^*,r^*)$ of its accumulation
points is $\sigma$-H\"older regular with constant $c>0$ in the sense of Definition {\rm \ref{holder1}} with the underlying sets $A^s,B^s$.
\end{definition}

\section{Slowly shrinking reach}
\label{sect_reach}
In this section we provide a sufficient condition for H\"older regularity.
Let $b\in B$ and $d$ be an outer normal of $B$ at $b$, $d\in N_B(b)$, $d\not=0$. We define
\[
R(b,d)=\sup\{R \geq 0: P_B(b+Rd/\|d\|)=b\}
\]
and call this the reach of $B$ at $b$ along $d$. Note that $R(b,d)\in [0,\infty]$, and $R(b,d) > 0$ for a proximal normal,
i.e., if $d\in N_B^p(b)$. 
We say that $\mathcal{B}(b+R(b,d)d/\|d\|,R(b,d))$ is the largest ball with centre on the ray $b+\mathbb R_+d$ which touches the set $B$ from
outside. The case $R(b,d)=+\infty$ occurs e.g. when $B$ is convex at $b$, in which case the largest ball is the half space
$\langle x-b,d\rangle \geq 0$. If $d$ is not a proximal normal, then $R(b,d)=0$, so the largest ball is a dot.

\begin{definition}
{\bf (Slowly shrinking reach).}
Let $\sigma\in (0,1]$.
The set $B$ has $\sigma$-slowly shrinking reach with respect to $A$ and gap $(A^*,B^*,r^*)$
if there exists $0 \leq \tau < 1$ such that
\begin{eqnarray}
\label{slow}
\limsup_{r^* < \|a^+-b^+\|\to r^*} \frac{\left(\|a^+-b^+\|-r^*\right)^\sigma}{R(b^+,d)-r^*} \leq \tau,
\end{eqnarray}
where 
$d=(a^+-b^+)/\|a^+-b^+\|$, and the limit is over building blocks $b\to a^+\to b^+$ approaching the gap.
We say that the reach shrinks with exponent $\sigma$ and
rate $\tau$.
\end{definition}

In \cite{noll} this was introduced for the case $r^*=0$, where it was termed slowly vanishing reach.
The following - not surprisingly - extends \cite[Prop. 5]{noll}.

\begin{proposition}
\label{prox}
Let $\sigma\in (0,1)$, $\tau \in [0,1)$.
Suppose $B$ has $\sigma$-slowly shrinking reach with rate $\tau \geq 0$ with respect to $A$ and gap $(A^*,B^*,r^*)$.
Then that gap is $(1-\sigma)$-H\"older regular with any constant $c>0$
satisfying $\frac{\tau}{2}\sqrt{2+c} < 1$.
\end{proposition}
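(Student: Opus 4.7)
My plan is to argue by contradiction. I fix $\tau'\in(\tau,2/\sqrt{2+c})$, which is possible because $\tfrac{\tau}{2}\sqrt{2+c}<1$ is a strict inequality; slowly shrinking reach then guarantees $R-r^*\geq(r-r^*)^\sigma/\tau'$ for every building block $b\to a^+\to b^+$ close enough to the gap. I then assume, for contradiction, that some such building block admits a point $b\in B\cap P_A^{-1}(a^+)\cap \mathcal{B}(a^+,(1+c)r)$ with $\cos\beta>\sqrt{c}(r-r^*)^{1-\sigma}$, where $\beta=\angle(a^+-b^+,b-b^+)$ and $r=\|a^+-b^+\|$, and derive a contradiction via tangent-ball geometry.

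The main geometric input comes from the tangent ball at $b^+$. Since $b^+\in P_B(a^+)$, the unit vector $d=(a^+-b^+)/r$ is a proximal normal, and by definition of the reach $\mathcal{B}^\circ(b^++Rd,R)$ is disjoint from $B$, where $R=R(b^+,d)\geq r$. Expanding $\|b-(b^++Rd)\|^2\geq R^2$ and using $\langle b-b^+,d\rangle=\cos\beta\,\|b-b^+\|$ yields $\|b-b^+\|\geq 2R\cos\beta$. In parallel, the law of cosines on the triangle $(b,b^+,a^+)$ together with $\|b-a^+\|\leq(1+c)r$ produces the quadratic $\|b-b^+\|^2-2r\cos\beta\,\|b-b^+\|-c(c+2)r^2\leq 0$, so $\|b-b^+\|\leq r\cos\beta+r\sqrt{\cos^2\beta+c(c+2)}$. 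Feeding the lower bound into this upper bound gives $(2R-r)\cos\beta\leq r\sqrt{\cos^2\beta+c(c+2)}$, and since $R\geq r$ both sides are nonnegative; squaring and using $(2R-r)^2-r^2=4R(R-r)$ delivers the central estimate
\[
\cos^2\beta\leq\frac{c(c+2)\,r^2}{4R(R-r)}.
\]

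The finishing step combines this with the slowly shrinking reach bound. From $R-r^*\geq(r-r^*)^\sigma/\tau'$ together with $R\geq r$, an elementary manipulation yields $R(R-r)\geq(r-r^*)^{2\sigma}/(\tau')^2\cdot(1-o(1))$ as $r\to r^*$; substituting this and the standing assumption $\cos^2\beta>c(r-r^*)^{2-2\sigma}$ into the central estimate, the factors $(r-r^*)^{2-2\sigma}$ and $r^2$ cancel against $(r-r^*)^{2\sigma}$ and $r^{2}$ respectively, leaving
\[
1<\frac{(c+2)(\tau')^2}{4}\bigl(1+o(1)\bigr).
\]
Since $\tfrac{\tau'}{2}\sqrt{2+c}<1$ leaves a strict positive gap $1-(\tau')^2(c+2)/4>0$, choosing $\eta$ small enough that the $o(1)$ term is smaller than this gap yields the sought contradiction.

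The step I expect to be the most delicate is the lower bound $R(R-r)\gtrsim(r-r^*)^{2\sigma}/(\tau')^2$ in the genuinely infeasible setting $r^*>0$: there slowly shrinking reach only forces $R\approx r^*+(r-r^*)^\sigma/\tau'$, barely above $r^*$, and one must decompose $R(R-r)=(R-r^*)(R-r)+r^*(R-r)$, play off $R\geq r$ against $R-r^*\geq(r-r^*)^\sigma/\tau'$, and track the asymptotics precisely enough that the $(1+o(1))$ correction does not consume the strict slack in $\tfrac{\tau'}{2}\sqrt{2+c}<1$. All the technical work is packed into getting this bookkeeping right.
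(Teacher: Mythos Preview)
Your overall strategy---derive the tangent-ball inequality $\|b-b^+\|\geq 2R\cos\beta$ from the reach, combine it with the law-of-cosines upper bound to obtain the central estimate $\cos^2\beta\leq c(c+2)r^2/[4R(R-r)]$, and then feed in the slowly shrinking reach hypothesis---is exactly the geometric content of the paper's proof. The paper merely organizes it differently: instead of working with the reach $R(b^+,d)$, it introduces the auxiliary radius $R=\tfrac{r}{2}\bigl(1+\sqrt{1+c(c+2)/\cos^2\beta}\bigr)$ for which $\mathcal B(b^++Rd,R)$ just contains $b$, and observes that this $R$ must exceed the reach. Solving the paper's $R$ back for $\cos\beta$ recovers your central estimate verbatim, so the two routes are equivalent.

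The genuine gap is in your finishing step, and it lies precisely where you suspected. Your lower bound $R(R-r)\geq(r-r^*)^{2\sigma}/(\tau')^2\,(1-o(1))$ is correct, but substituting it together with $\cos^2\beta>c(r-r^*)^{2-2\sigma}$ into the central estimate gives
\[
c\,(r-r^*)^{2-2\sigma}\;<\;\frac{c(c+2)\,r^2\,(\tau')^2}{4\,(r-r^*)^{2\sigma}}\bigl(1+o(1)\bigr),
\qquad\text{i.e.}\qquad
(r-r^*)^2\;<\;\frac{(c+2)(\tau')^2}{4}\,r^2\,\bigl(1+o(1)\bigr).
\]
Your claimed cancellation is wrong: $(r-r^*)^{2-2\sigma}\cdot(r-r^*)^{2\sigma}=(r-r^*)^2$, not $r^2$, and there is only one factor $r^2$ present, so nothing ``cancels against $r^2$''. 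When $r^*=0$ the two coincide and the contradiction $1<\tfrac{(c+2)(\tau')^2}{4}(1+o(1))$ does follow; but for $r^*>0$ the displayed inequality reads $(r-r^*)^2/r^2<\text{const}$, which is trivially satisfied as $r\to r^*$ and yields no contradiction whatsoever. Your proposed decomposition $R(R-r)=(R-r^*)(R-r)+r^*(R-r)$ does not help either: the extra term $r^*(R-r)\sim r^*(r-r^*)^\sigma/\tau'$ only sharpens the lower bound on $R(R-r)$, but the resulting upper bound on $\cos^2\beta$ is still of order $r\tau'/(r-r^*)^\sigma$, which blows up rather than tending to zero. The paper handles this by writing $rQ-r^*=(r-r^*)Q+r^*(Q-1)$ with $Q=R/r\geq1$ and arguing from $(r-r^*)^{-\sigma}\tau'(rQ-r^*)>1$ directly; you would need to track that $r^*(Q-1)$ term through rather than absorb everything into $R(R-r)$.
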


\begin{proof}
We choose $\tau' > \tau$ and $\epsilon > 0$ such that
$\frac{\tau'}{2}\left(\epsilon + \sqrt{\epsilon^2+2+c}\right) < 1$. By hypothesis there exists neighborhoods $U$ of $B^*$ and $V$ of $A^*$
such that $\frac{(r-r^*)^\sigma}{R(b^+,d)-r^*} < \tau'$ for every building block $b \to a^+\to b^+$ with
$b^+\in U$, $a^+\in V$ and $d=(a^+-b^+)/\|a^+-b^+\|$, $r=\|a^+-b^+\| > r^*$. By shrinking $U,V$ further if
necessary, we may arrange that $(r-r^*)^{1-\sigma}<\epsilon$. We show that the neighborhoods are
as required in (\ref{holder}).

We have to show that $b\in B$ is not an element of the set (\ref{holder}). 
We may assume that $b\in \mathcal B(a^+,(1+c)r)$, as otherwise there is nothing to prove. Let
$\beta=\angle (a^+-b^+,b-b^+)$. We have to show $\cos \beta \leq \sqrt{c}(r-r^*)^{1-\sigma}$. This is clear for
$\cos \beta \leq 0$, so let $\cos \beta > 0$. Following the proof of \cite[Prop. 5]{noll} we put
$R=\frac{r}{2} \left(1+ \sqrt{1+\frac{2c+c^2}{\cos^2\beta}}  \right)$. As in \cite{noll} it now follows that
$\mathcal B(b^++Rd,R)$ contains $b$, which implies $R > R(b^+,d)$. Hence by the choice of $U,V$,
$(r-r^*)^\sigma/(R-r^*) < \tau'$. Substituting the definition of $R$ gives
\begin{align*}
    1 &< (r-r^*)^{-\sigma} \tau' \left(r \left(\frac{1}{2}+\frac{1}{2} \sqrt{1+\frac{2c+c^2}{\cos^2\beta}}\right) -r^*\right) \\
    &= (r-r^*)^{1-\sigma} \tau' \left(\frac{1}{2}+\frac{1}{2} \sqrt{1+\frac{2c+c^2}{\cos^2\beta}}\right) + (r-r^*)^{-\sigma}\tau'
    r^* \left(1-\left(\frac{1}{2}+\frac{1}{2} \sqrt{1+\frac{2c+c^2}{\cos^2\beta}}\right)\right) \\
    &\leq  (r-r^*)^{1-\sigma} \tau'  \left(\frac{1}{2}+\frac{1}{2} \sqrt{1+\frac{2c+c^2}{\cos^2\beta}}\right),
\end{align*}
the rightmost term being $\leq 0$. Now suppose that
$\cos\beta > \sqrt{c}(r-r^*)^{1-\sigma}$ contrary to what is claimed, then
\begin{align*}
    1 &< (r-r^*)^{1-\sigma} \tau' \left(\frac{1}{2} + \frac{1}{2} \sqrt{1+\frac{2c+c^2}{c(r-r^*)^{2(1-\sigma)}}}   \right)\\
    &= \frac{\tau'}{2} \left((r-r^*)^{1-\sigma}+\sqrt{(r-r^*)^{2(1-\sigma)}+2+c}  \right)\\
    &< \frac{\tau'}{2} \left(\epsilon + \sqrt{\epsilon^2+2+c}   \right) < 1,
\end{align*}
a contradiction, which proves the result.
\hfill $\square$
\end{proof}

For the following recall the definition of prox-regularity
e.g. in \cite{rock}.

\begin{corollary}
\label{non_shrinking}
Let $(A^*,B^*,r^*)$ be a gap and
suppose $B$ is prox-regular at the points of $B^*$ with reach $>r^*$. Then for every constant $c > 0$ and
every $\sigma \in (0,1)$ the gap is H\"older regular with constant $c$ and exponent $\sigma$.
\end{corollary}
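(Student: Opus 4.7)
The plan is to reduce the corollary to Proposition \ref{prox} by producing, for every $\sigma \in (0,1)$, the $(1-\sigma)$-slowly shrinking reach of $B$ with respect to $A$ and the gap $(A^*,B^*,r^*)$, with rate $\tau = 0$. Once this is done, applying Proposition \ref{prox} with exponent $1-\sigma$ yields that the gap is $(1-(1-\sigma)) = \sigma$-H\"older regular, and since $\tau = 0$ the constraint $\frac{\tau}{2}\sqrt{2+c} < 1$ is trivially satisfied for every $c>0$.

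The central step is producing a uniform lower bound on the reach $R(b^+,d)$ strictly exceeding $r^*$, for $b^+$ in a neighborhood of $B^*$ and $d$ the unit vector $(a^+-b^+)/\|a^+-b^+\|$. First I would recall that, by definition, prox-regularity of $B$ at a point $b^*\in B^*$ with reach $>r^*$ gives a neighborhood of $b^*$ and a number $R_{b^*} > r^*$ such that for every $b\in B$ in that neighborhood and every unit proximal normal $d \in N_B^p(b)$, one has $R(b,d) \geq R_{b^*}$. Since $B^*$ is compact (as the set of accumulation points of a bounded sequence), a standard covering argument produces a single neighborhood $U$ of $B^*$ and a constant $R^* > r^*$ such that $R(b,d) \geq R^*$ for all $b\in B\cap U$ and all unit $d\in N_B^p(b)$.

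Next I would feed this into the slowly shrinking reach estimate. For any building block $b \to a^+ \to b^+$ approaching the gap, we have $b^+ = P_B(a^+)$, so $a^+-b^+ \in N_B^p(b^+)$, which means $d = (a^+-b^+)/\|a^+-b^+\|$ is a unit proximal normal at $b^+$. For $b^+$ in the neighborhood $U$ of $B^*$, the uniform bound yields
\[
\frac{\bigl(\|a^+-b^+\|-r^*\bigr)^{1-\sigma}}{R(b^+,d)-r^*} \;\leq\; \frac{\bigl(\|a^+-b^+\|-r^*\bigr)^{1-\sigma}}{R^*-r^*}.
\]
Since $1-\sigma > 0$ and $\|a^+-b^+\|\to r^*$ along sequences approaching the gap, the right-hand side tends to $0$. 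Therefore the limsup in the definition of slowly shrinking reach is $0$, so $B$ has $(1-\sigma)$-slowly shrinking reach with rate $\tau=0$ with respect to $A$ and the gap.

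Finally I would apply Proposition \ref{prox} with exponent $1-\sigma$ and rate $\tau = 0$; the hypothesis $\frac{\tau}{2}\sqrt{2+c}<1$ holds vacuously for every $c > 0$, and the conclusion gives exactly $\sigma$-H\"older regularity of the gap with constant $c$. I expect no real obstacle; the only delicate point is assembling the uniform reach bound $R^* > r^*$ from pointwise prox-regularity on the compact set $B^*$, which is the compactness argument sketched above.
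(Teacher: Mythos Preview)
Your proposal is correct and follows essentially the same route as the paper: show that prox-regularity with reach $>r^*$ forces the slowly-shrinking-reach limsup to vanish, then invoke Proposition~\ref{prox}. Your write-up is in fact more explicit than the paper's, which simply asserts ``here $\tau=0$'' and then, somewhat redundantly, passes through an auxiliary $\tau'>0$ before adjusting $c$; your direct use of $\tau=0$ in the constraint $\frac{\tau}{2}\sqrt{2+c}<1$ is cleaner. One small caveat: your compactness argument for the uniform reach bound $R^*>r^*$ appeals to $B^*$ being ``the set of accumulation points of a bounded sequence,'' but the corollary as stated is for an arbitrary gap $(A^*,B^*,r^*)$, where $B^*$ need not arise this way; the paper's proof is equally silent on this point, so you are not missing anything the paper supplies.
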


\begin{proof}
Here $\tau = 0$, so for every $\tau' > 0$ and $\sigma' \in (0,1)$ the set $B$ has $(1-\sigma')$-slowly shrinking
reach with rate $\tau'$ for the gap $r^*$ for any constant $c$ with $\frac{\tau'}{2} \sqrt{2+c} < 1$. Given any $c>0$, we can adjust
$\tau'\ll 1$ so that this condition is met, and we let $\sigma = 1-\sigma'$.
\hfill $\square$
\end{proof}

Applying the argument of  Proposition \ref{prox} to prox-building blocks gives the following
extension of \cite[Cor. 3]{noll}. 

\begin{corollary}
\label{cor3}
Consider a prox-alternating sequence $a_{k-1}\to b_{k-1} \stackrel{p}{\to} a_k$ with gap $(A^*,B^*,r^*)$. 
Suppose $B$ is prox-regular with reach $>r^*$  at the points of $B^*$. Then for every constant $c>0$ and every $\sigma\in (0,1)$ the gap
is H\"older regular with constant $c$ and exponent $\sigma$ for the sets $A^s,B^s$. 
\hfill $\square$
\end{corollary}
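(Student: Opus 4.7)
The plan is to rerun the geometric argument of Proposition~\ref{prox} on the prox-building blocks of the sequence, working first with $A,B$, and then to transfer the conclusion to $A^s,B^s$. The key observation is that the proof of Proposition~\ref{prox} never uses the relation $a^+\in P_A(b)$: it takes an arbitrary candidate $b'\in B$ inside the ball $\mathcal B(a^+,(1+c)r)$ satisfying $\cos\beta>\sqrt{c}(r-r^*)^{\sigma}$, constructs the ball $\mathcal B(b^++Rd,R)$ with $R=\tfrac{r}{2}\bigl(1+\sqrt{1+(2c+c^2)/\cos^2\beta}\bigr)$, verifies that $b'$ lies inside, and contradicts the slowly shrinking reach inequality via $R>R(b^+,d)$. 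Only the local geometry at $b^+$ is used, and the required proximal normal $a^+-b^+\in N_B^p(b^+)$ is still furnished by the condition $b_k\in P_B(a_k)$ kept in the prox-alternating definition.

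First I would fix $c>0$ and $\sigma\in(0,1)$ and read the argument of Corollary~\ref{non_shrinking} backwards: prox-regularity of $B$ at points of $B^*$ with reach $>r^*$ means that for $d=(a_k-b_k)/\|a_k-b_k\|$ the quantity $R(b_k,d)-r^*$ is uniformly bounded below by a positive constant as $b_k$ ranges over a neighborhood of $B^*$. Consequently $(r-r^*)^{1-\sigma}/(R(b_k,d)-r^*)\to 0$ as $r\to r^*$ along prox-building blocks $b_{k-1}\stackrel{p}{\to}a_k\to b_k$ of the sequence, so $B$ has $(1-\sigma)$-slowly shrinking reach with any prescribed rate $\tau'>0$ along such building blocks. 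Choosing $\tau',\epsilon>0$ with $\tfrac{\tau'}{2}(\epsilon+\sqrt{\epsilon^2+2+c})<1$ and shrinking the neighborhoods $V$ of $A^*$, $U$ of $B^*$ and $\eta>0$ accordingly, the final chain of inequalities in the proof of Proposition~\ref{prox} goes through unchanged and produces the desired contradiction whenever $\cos\beta>\sqrt{c}(r-r^*)^\sigma$.

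It then remains only to translate the conclusion to $A^s,B^s$. The step above rules out the existence of any $b'\in B$ violating the H\"older bound at a prox-building block in the chosen neighborhood; since $B^s\subset B$, no element of $P_{A^s}^{-1}(a_k)\cap B^s$ can violate it either, so the bad set in~(\ref{holder}) with $A^s,B^s$ in place of $A,B$ is a fortiori empty. This is the desired H\"older regularity of the gap with constant $c$ and exponent $\sigma$ for $A^s,B^s$ in the sense of Definition~\ref{holder2}.

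The main obstacle is really one of verification rather than invention: one must carefully confirm that the geometric construction inside Proposition~\ref{prox} depends only on $b^+\in P_B(a^+)$ at $b^+$ (for the reach estimate) and not on any projection property of the incoming step $b\to a^+$. Once this is acknowledged, replacing ordinary by prox-alternation is automatic, the prox-regularity assumption absorbs the $\sigma$-slowly shrinking reach hypothesis with rate $\tau'$ arbitrarily small for every $\sigma\in(0,1)$, and the passage from $B$ to $B^s$ can only make the candidate set smaller.
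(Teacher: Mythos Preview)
Your proposal is correct and follows exactly the route the paper indicates: the paper's entire argument for Corollary~\ref{cor3} is the sentence ``Applying the argument of Proposition~\ref{prox} to prox-building blocks,'' and you have correctly unpacked this by (i) noting that the geometric contradiction in Proposition~\ref{prox} uses only $b^+\in P_B(a^+)$ (which survives in the prox-alternating setting) and never the incoming projection relation, (ii) invoking the reasoning of Corollary~\ref{non_shrinking} to get arbitrarily small $\tau'$ from prox-regularity with reach $>r^*$, and (iii) observing $B^s\subset B$ so the forbidden set only shrinks. There is nothing to add.
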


\begin{example}
Let $B=\{(x,|x|^{3/2}): x\in \mathbb R\}$,
then $B$ has vanishing reach at the origin in direction $d=(0,1)$. We claim that the radius $R_x$ of the largest ball
touching $B$ at $b=(x,|x|^{3/2})$ from above is of the order $R_x=O(|x|^{1/2})$ as $x\to 0$. This can be seen
as follows. An upper bound for $R_x$ is the radius of the osculating circle at $(x,|x|^{3/2})$, which is $\overline{R}_x=\frac{4}{3}|x|^{1/2}(1+\frac{9}{4}|x|)^{3/2}$,
so for small $x$ we have $\overline{R}_x\sim\frac{4}{3}|x|^{1/2}$. For a lower bound, note that for
a plane $C^2$-curve with positive reach and without bottlenecks the reach is
$1/\sigma$ when 
$\sigma$ is  the maximal curvature, cf. \cite{aamari}. To apply this we approximate $B$ by curves $B_\epsilon$ with positive reach. We let
$y=ax^2+bx+c$ on $(-\infty,\epsilon]$ and $y=x^{3/2}$ on $[\epsilon,\infty)$  so that the combined function is $C^2$. This works with
$a=\frac{3}{8}\epsilon^{-1/2}$, $b=\frac{3}{4}\epsilon^{1/2}$, $c=-\frac{1}{8}\epsilon^{3/2}$. Now the reach $r_\epsilon$ of $B_\epsilon$ 
can be computed exactly via \cite{aamari} and is bounded below by
$r_\epsilon\geq \frac{3}{4}\epsilon^{-1/2}$. This means any ball touching $B_\epsilon$ from above with radius
$r< r_\epsilon$ has a unique contact point. Since this is also true for the contact points $b=(x,x^{3/2})\in B$ with $x > \epsilon$, we see that 
the reach $R_x$ of $b=(x,x^{3/2})$ with $x \geq \epsilon$ is $\geq O(\epsilon^{1/2})$. Namely
$\frac{3}{4} |x|^{1/2} \leq R_x \leq \frac{4}{3}|x|^{1/2}$ as $x\to 0$, proving $R(b,d)=O(|x|^{1/2})$ for the
denominator in (\ref{slow}).

Now let $1< \alpha
< \frac{3}{2}$ and put $A=\{(x,|x|^\alpha): x\in \mathbb R\}$, so that $A$ is above $B$ and touches it at the origin. 
Let $a=(y,y^\alpha)$, $b= (x,x^{3/2})$, $b=P_B(a)$, then
the ansatz $(x,x^{3/2})+t(-\frac{3}{2}x^{1/2},1)=(y,y^\alpha)$ gives $t=y^\alpha -x^{3/2}$ and, 
$y(1+\frac{3}{2}x^{1/2}y^{\alpha-1})=x(1+\frac{3}{2}x)$, hence
$\|a-b\| = |y^\alpha - x^{3/2}|\sqrt{1+\frac{9}{4}x}\sim |y^\alpha - x^{3/2}|
\sim |x^\alpha \left(\frac{1+({3}/{2})x}{1+(3/2)x^{1/2}y^{\alpha-1}}\right)^\alpha-x^{3/2}|
= x^\alpha (1+o(1)-x^{3/2-\alpha}) \sim x^\alpha$.
Then $\frac{\|a-b\|^\sigma}{R(b,d)} = O(x^{\alpha\sigma-\frac{1}{2}})$, which is $O(1)$ for $\sigma \geq 1/2\alpha$, so that $B$ has $\sigma$-slowly vanishing reach with respect to
$A$.

For the infeasible case we use
$B = \{(x,|x|^{3/2}+\frac{1}{2}x^2): x\in \mathbb R\}$, then $B$ has slowly shrinking reach at $(0,0)$ with regard to
$A=\{(x,|x|^\alpha +1): x\in \mathbb R\}$ and
gap
value $r^*=1$.
\end{example}

\section{Three-point estimate}
\label{sect_three}
The following result extends \cite[Lemma 1]{noll}, where it was given for
the feasible case $r^*=0$.

\begin{lemma}
{\bf (Three-point estimate)}.
\label{three-point}
Suppose the building block $b \to a^+\to b^+$ satisfies the angle
condition for $r^*$ with constant $\gamma > 0$ and exponent $\omega$.
Suppose further that the building block is $\omega/2$-H\"older regular
with constant $c>0$ satisfying $c < \gamma/2$. Then it satisfies the three-point estimate
\begin{equation}
\label{three_point_estimate}
    \|a^+-b^+\|^2 + \ell \|b-b^+\|^2 \leq \|b-a^+\|^2
\end{equation}
with $\ell = \min\left\{\frac{1}{2},1-\sqrt{\frac{2c}{\gamma}},\frac{c}{2+c}\right\}$ depending only on $c,\gamma$.
\end{lemma}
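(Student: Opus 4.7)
The plan is to set $r=\|a^+-b^+\|$, $s=\|b-a^+\|$, $t=\|b-b^+\|$ and to name the two relevant angles in the triangle $b, a^+, b^+$: the angle $\alpha=\angle(b-a^+,b^+-a^+)$ at $a^+$, and the angle $\beta=\angle(a^+-b^+,b-b^+)$ at $b^+$. The law of cosines at $b^+$ rewrites $s^2=r^2+t^2-2rt\cos\beta$, so the target estimate $r^2+\ell t^2\le s^2$ is equivalent to $(1-\ell)t^2\ge 2rt\cos\beta$, which is what I would aim for. I would then split into two cases according to whether $b$ lies inside the ball $\mathcal{B}(a^+,(1+c)r)$ where H\"older regularity (\ref{holder}) has bite, or outside it.

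In the exterior case $s>(1+c)r$ the H\"older hypothesis is not needed. The projection property $r\le s$ together with the triangle inequality $t\le s+r$ already yields, setting $u=s/r>1+c$,
\[
s^2-r^2-\ell\, t^2 \;\ge\; s^2-r^2-\ell(s+r)^2 \;=\; r^2(u+1)^2\left(\frac{u-1}{u+1}-\ell\right),
\]
and since $(u-1)/(u+1)$ is increasing in $u$ with value $c/(2+c)$ at $u=1+c$, the constraint $\ell\le c/(2+c)$ would suffice in this case.

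In the interior case $s\le(1+c)r$ the point $b$ lies in $\mathcal{B}(a^+,(1+c)r)\cap P_A^{-1}(a^+)\cap B$, so H\"older regularity with $\sigma=\omega/2$ forces $\cos\beta\le\sqrt{c}(r-r^*)^{\omega/2}$. If $\cos\beta\le 0$ the target is trivial, so I would focus on $\cos\beta>0$. The angle condition at $a^+$ gives $1-\cos\alpha\ge\gamma(r-r^*)^{\omega}$, and the law of cosines at $a^+$ then produces the crucial lower bound on $t$:
\[
t^2 \;=\; (s-r)^2+2sr(1-\cos\alpha) \;\ge\; 2r^2\gamma(r-r^*)^{\omega},
\]
using $s\ge r$. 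Thus $t\ge r\sqrt{2\gamma}(r-r^*)^{\omega/2}$, and the required $(1-\ell)t\ge 2r\cos\beta$ would follow from $(1-\ell)\sqrt{2\gamma}\ge 2\sqrt{c}$, i.e.\ $\ell\le 1-\sqrt{2c/\gamma}$. The standing hypothesis $c<\gamma/2$ keeps this bound strictly positive.

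Taking the minimum of the two case constraints, and capping at $\tfrac{1}{2}$ as an overall safety bound (convenient because it ensures $1-\ell\ge\tfrac{1}{2}$ in the subsequent convergence arguments), delivers the value $\ell=\min\{\tfrac{1}{2},1-\sqrt{2c/\gamma},c/(2+c)\}$. The main obstacle is not any single computation but the double use of the law of cosines—once at $b^+$ to reformulate the target in a form that invites H\"older regularity, and once at $a^+$ to inject the lower bound on $t$ produced by the angle condition; matching the exponent $\omega/2$ in H\"older with $\omega$ in the angle condition is precisely what makes the two $(r-r^*)^{\omega/2}$ factors cancel and yield a clean threshold on $\ell$.
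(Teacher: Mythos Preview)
Your proposal is correct and follows essentially the same strategy as the paper's proof: rewrite the target via the law of cosines at $b^+$ as $(1-\ell)\,t \ge 2r\cos\beta$, then split into cases and combine the H\"older bound on $\cos\beta$ with the angle-condition lower bound on $t$ obtained from the law of cosines at $a^+$. Your case organisation (exterior $s>(1+c)r$ versus interior $s\le(1+c)r$, with the sign of $\cos\beta$ handled inside the interior case) is slightly cleaner than the paper's (sign of $\cos\beta$ first, then ball membership), and you supply explicitly the exterior estimate $s^2-r^2-\ell(s+r)^2=r^2(u+1)^2\bigl[(u-1)/(u+1)-\ell\bigr]$ that the paper only cites from \cite[Lemma~1, part~4)]{noll}; otherwise the two arguments coincide.
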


\begin{proof}
Following the proof of \cite[Lemma 1]{noll} we have to
show that $\frac{1-\ell}{2}\|b-b^+\|\geq \|a^+-b^+\|\cos\beta$, where $\beta = \angle(b-b^+,a^+-b^+)$.
As in that reference there are three cases.  Case I is when $\beta\in [\pi/2,\pi]$, where
$\ell=1/2$ works.  Case II is when $\beta \in [0,\frac{\pi}{2})$, and the latter has two subcases IIa and IIb.

Case IIa is when $b\in \mathcal B(a^+,(1+c)r)$, in which event regularity gives
$\cos\beta \leq \sqrt{c} (r-r^*)^{\omega/2}$. Here we need the angle condition. With $\alpha = \angle(b-a^+,b^+-a^+)$
the cosine theorem gives
\begin{align*}
\|b-b^+\|^2 &\geq 2\|b-a^+\| \|a^+-b^+\|(1-\cos\alpha)\\
&\geq 2\gamma \|b-a^+\|\|a^+-b^+\| (r-r^*)^\omega\\
& \geq \frac{2\gamma}{c}\|a^+-b^+\|^2 \cos^2\beta.
\end{align*}
This leads to $\ell=1-\sqrt{\frac{2\gamma}{c}}$.

The remaining  case IIb is when $\cos\beta > \sqrt{c}(r-r^*)^{\omega/2}$. Here by H\"older regularity
we must have $\|b-a^+\|\geq (1+c)r$. 
Now the argument in part 4)  of \cite[Lemma 1]{noll} can be adopted without changes and requires $\ell = \frac{c}{c+2}$.
Altogether, covering the three cases gives the formula for $\ell $ in the statement.
\hfill $\square$
\end{proof}

\begin{remark}
The three point inequality could be considered as stand-alone, as in \cite{csiszar,chinese}.
Here, following \cite{noll}, we consider it as a technical tool, to be derived from regularity of the sets, as this includes the important case
when $B$ is prox-regular.  
\end{remark}

\section{Convergence}
\label{sect_convergence}
In the feasible case \cite{noll} local convergence was understood in the sense that
if an alternating sequence gets sufficiently close to $A \cap B$, then it converges to some point
in the intersection. Presently we obtain a similar statements for gaps $(A^*,B^*,r^*)$. 
If the $a_k$ get close to $A^*$ and the $b_k$   close
to $B^*$, and if  $r^* <\|a_k-b_k\| < r^* +\eta$ for some small $\eta > 0$, then we expect convergence $a_k \to a^*\in A$, $b_k \to b^*\in B$
to a pair $\|a^*-b^*\|=r^*$  realizing the gap. 
As in the feasible case, this requires the angle condition in tandem with H\"older regularity. However, for $r^* >0$
we need a third ingredient.

\begin{definition}
We say that a gap $(A^*,B^*,r^*)$ is saturated, if for every neighborhood $V$ of $A^*$
there exists a neighborhood $U$ of $B^*$ such that $P_A(b)\subset V$ for every $b\in B \cap U$.
\end{definition}

\begin{remark}
Note that every zero gap $(F,F,0)$ with $F \subset A \cap B$ is saturated. Indeed, let $V=\mathcal N(F,\delta)$
be the neighborhood of $F$, and choose $U=V$. If $b\in U \cap B$, then
there exists $c\in F$ with $\|b-c\| < \delta$. But $F \subset A$, hence $c\in A$, hence $d_A(b) \leq \|b-c\| < \delta$ gives
$P_A(b) \subset \mathcal N(F,\delta)$.
\end{remark}

\begin{remark}
If $P_A$ is single-valued on the set $\{b\in B\cap \mathcal N(B^*,\delta): r^* < {\rm dist}(b,A^*) < r^*+\eta\}$, then the gap is automatically saturated. 
This could still allow $P_A$ to be many-valued on $B^*$. 
\end{remark}

\begin{remark}
The gap $(A^*,B^*,r^*)$ of accumulation points of an alternating sequence $a_k,b_k$ is automatically
saturated with regard to the  underlying sets $A^s=\{a_k: k\in \mathbb N\} \cup A^*$ and
$B^s=\{b_k: k\in \mathbb N\} \cup B^*$. This remains true for a prox-alternating sequence $a_{k-1}\to b_{k-1}\stackrel{p}{\to} a_k$.
\end{remark}

\begin{definition}
We say that the alternating sequence reaches the $\delta$-neighborhood of the gap 
$(A^*,B^*,r^*)$ if there exists $k\in \mathbb N$ with $b_{k}\in \mathcal N(B^*,\delta)$, 
$a_{k+1}\in \mathcal N(A^*,\delta)$, $r^*< \|a_k-b_k\|< r^*+\delta$.
\end{definition}

\begin{theorem}
\label{theorem1}
{\bf (Local attraction).}
Suppose $B$ satisfies the angle condition with exponent $\omega = 4\theta-2$,  $\theta \in [\frac{1}{2},1)$ and constant $\gamma > 0$
for the saturated gap $(A^*,B^*,r^*)$. 
Moreover, suppose the gap is $\omega/2$-H\"older regular with constant $c < \frac{\gamma}{2}$. Then there exists $\delta > 0$
such that whenever an alternating sequence 
reaches the $\delta$-neighborhood of the gap, then
$b_k \to b$ for some $b\in B$ realizing the gap $r^*$. If $A^\sharp$ is the set of accumulation points of the
$a_k$, then $d_{A^\sharp}(b)=d_A(b)=r^*$. 
\end{theorem}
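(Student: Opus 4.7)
The plan is to prove the theorem in three stages.

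\textbf{Stage 1 (trapping and three-point cascade).} Choose $\delta>0$ small so that the entering hypothesis places the sequence in neighborhoods $V\supset A^*$, $U\supset B^*$ on which the angle condition and H\"older regularity hold. Saturation then gives $a_{i+1}\in V$ whenever $b_i\in U$, and a bootstrap with the summability estimate produced in Stage 2 keeps $b_i\in U$ for all subsequent $i$. Lemma \ref{three-point} applies to every building block $b_{i}\to a_{i+1}\to b_{i+1}$, yielding
\[
e_{i+1}^2 + \ell\,\|b_i - b_{i+1}\|^2 \ \leq\ d_{i+1}^2 \ \leq\ e_i^2,
\]
with $e_i=\|a_i-b_i\|$, $d_i=\|b_{i-1}-a_i\|$, and $\ell=\ell(c,\gamma)>0$. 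Hence $e_i\searrow r^\infty\geq r^*$ and $\sum\|b_i-b_{i+1}\|^2<\infty$. Saturation applied for arbitrarily small nested $V$ forces every accumulation point $a^\infty$ of $(a_i)$ to lie in $A^*$, so $e_i\to d_B(a^\infty)=r^*$.

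\textbf{Stage 2 (KL estimate).} I would run the Attouch--Bolte--Svaiter scheme on $g=i_A+\tfrac12 d_B^2$. Three ingredients are needed. First, sufficient descent $g(a_{i-1})-g(a_i)=\tfrac12(e_{i-1}^2-e_i^2)\geq\tfrac{\ell}{2}\|b_{i-1}-b_i\|^2$, direct from three-point. Second, a chosen Fr\'echet subgradient $b_{i-1}-b_i\in\widehat\partial g(a_i)$, exactly as in the proof of Theorem \ref{critical}. Third, a KL-type lower bound on this subgradient; this is the main obstacle, because the hypothesis supplies the angle condition for $f=i_A+\tfrac12(d_B-r^*)^2$, not directly for $g$. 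I would extract the bound from the cosine law: $\|b_{i-1}-b_i\|^2\geq 2d_ie_i(1-\cos\alpha_i)\geq 2\gamma d_ie_i(e_i-r^*)^\omega$ by the angle condition, while $g(a_i)-\tfrac12(r^*)^2=\tfrac12(e_i-r^*)(e_i+r^*)$. Using $d_i,e_i\to r^*>0$ and rearranging yields
\[
\bigl(g(a_i)-\tfrac12(r^*)^2\bigr)^{-\omega/2}\,\|b_{i-1}-b_i\| \ \geq\ \gamma_g\ >\ 0
\]
for $i$ large, which is the required KL inequality on the chosen subgradient, with exponent $\omega/2\in[0,1)$. (The feasible case $r^*=0$ is simpler: $g=f$, and one may use the \L ojasiewicz exponent $\theta$ of $f$ directly, where the same computation gives exponent $\theta$ rather than $\omega/2$.)

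\textbf{Stage 3 (summation, convergence, limit identification).} Concavity of $\phi(s)=\tfrac{2}{2-\omega}s^{1-\omega/2}$ together with the three ingredients gives
\[
\phi\bigl(g(a_{i-1})-\tfrac12(r^*)^2\bigr)-\phi\bigl(g(a_i)-\tfrac12(r^*)^2\bigr)\ \geq\ C\,\frac{\|b_{i-1}-b_i\|^2}{\|b_{i-2}-b_{i-1}\|},
\]
and the AM--GM bound $u^2/v+v\geq 2u$ followed by telescoping produces $\sum\|b_{i-1}-b_i\|<\infty$. Hence $b_i\to b\in B$ by Cauchy's criterion, which closes the bootstrap of Stage 1. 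Finally, $d_A(b_i)\leq\|a_{i+1}-b_i\|\leq e_i\to r^*$ and continuity of $d_A$ yield $d_A(b)\leq r^*$; for any accumulation point $a\in A^\sharp$ of $(a_i)$, the triangle inequality applied to $\|a_{i_j}-b_{i_j}\|\to r^*$ and $\|b_{i_j}-b\|\to 0$ gives $\|a-b\|=r^*$, so $d_{A^\sharp}(b)\leq r^*$. Since $A^\sharp\subseteq A$, also $d_{A^\sharp}(b)\geq d_A(b)$, and the two inequalities collapse to $d_{A^\sharp}(b)=d_A(b)=r^*$.
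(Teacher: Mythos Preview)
Your overall strategy matches the paper's: combine the three-point estimate with the angle condition to obtain a one-step inequality $\|b_k-b_{k+1}\|\le\tfrac12\|b_{k-1}-b_k\|+C\,\Delta_k$ with telescoping $\Delta_k$, bootstrap via a careful choice of $\delta$ (using saturation) to trap the iterates, then sum. Framing the argument through $g=i_A+\tfrac12 d_B^2$ and the Attouch--Bolte--Svaiter scheme rather than directly through $(d_B(a_k)-r^*)^{2-2\theta}$ as the paper does is a legitimate equivalent variant, since $g(a_i)-\tfrac12(r^*)^2=\tfrac12(e_i-r^*)(e_i+r^*)$ and the factor $e_i+r^*$ stays within fixed positive bounds when $r^*>0$.

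There is, however, a genuine error in Stage~1. The claim that ``Saturation applied for arbitrarily small nested $V$ forces every accumulation point $a^\infty$ of $(a_i)$ to lie in $A^*$'' is false. Saturation says: for each neighborhood $V\supset A^*$ there exists $U=U(V)\supset B^*$ with $P_A(B\cap U)\subset V$. Once $\delta$ is fixed---and it must be fixed before the sequence is run, in order to state the theorem and launch the bootstrap---the $b_i$ are trapped only in that one $U$, and hence the $a_{i+1}$ only in the one corresponding $V$; you cannot shrink $V$ afterwards. The paper's own remark after the theorem is explicit on this point: ``it is not claimed that $b^\sharp\in B^*$, nor do we have $A^\sharp\subset A^*$.'' Consequently your Stage~1 derivation of $e_i\to r^*$ is invalid, and invoking ``$d_i,e_i\to r^*$'' in Stage~2 is circular. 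The repair is easy: Stage~2 only needs the bounds $r^*\le e_i,d_i\le e_0$, which are immediate, so replace the convergence statement by these bounds there. The convergence $e_i\to r^*$ is legitimately deduced \emph{after} $b_i\to b$ in Stage~3, from your own cosine-law estimate $\|b_{i-1}-b_i\|^2\ge 2\gamma d_ie_i(e_i-r^*)^\omega$ together with $\|b_{i-1}-b_i\|\to 0$ and $d_ie_i\ge (r^*)^2$; this is exactly how the paper argues in its final step. With that in hand, and noting also $d_A(b_i)=\|a_{i+1}-b_i\|\ge e_{i+1}$ for the lower bound, your limit identification $d_{A^\sharp}(b)=d_A(b)=r^*$ goes through.
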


\begin{proof}
1)
Since there is nothing to prove if the iterates attain the gap in a finite number of steps, we assume that  the sequence $b_k$ is infinite. 
By Lemma \ref{three-point} there exists a neighborhood $\mathcal N(A^*,\epsilon)$ of $A^*$, $\eta > 0$, and $\ell \in (0,1)$, such that for
every building block $b_k \to a_{k+1} \to b_{k+1}$ with $a_{k+1}\in V=\mathcal N(A^*,\epsilon)$ and $r^*<\|a_{k+1}-b_{k+1}\|< r^*+\eta$
the three-point-estimate 
\[
\|b_k-a_{k+1}\|^2 \geq \|a_{k+1}-b_{k+1}\|^2 +\ell \|b_k-b_{k+1}\|^2
\]
is satisfied.
Then for these $a_{k+1}\in \mathcal N(A^*,\epsilon)$
we have also the four point estimate
\begin{equation}
\label{four}
d_B(a_k)^2 - d_B(a_{k+1})^2 \geq \ell \|b_k-b_{k+1}\|^2.
\end{equation}

2) Since the neighborhood $V=\mathcal N(A^*,\epsilon)$ chosen in Lemma \ref{three-point} has the property that
the angle inequality is satisfied for the gap $(A^*,B^*,r^*)$
as soon as $a_k\in V$,  we have
\[
\frac{1-\cos \alpha_k}{\left( \|a_k-b_k\|-r^{*}   \right)^{4\theta-2}} \geq \gamma,
\]
where $\alpha_k=\angle( b_{k-1}-a_k,b_k-a_k)$.
Now following the lead of \cite[Thm. 1]{noll} we apply the cosine theorem to obtain
\begin{align}
\label{est1}
\begin{split}
    \|b_{k-1}-b_k\|^2 &= \left(\|b_{k-1}-a_k\|-\|a_k-b_k\|  \right)^2 + 2 \|b_{k-1}-a_k\| \|a_k-b_k\| (1-\cos \alpha_k) \\
    &\geq 2 \gamma \|b_{k-1}-a_k\| \|a_k-b_k\| \left( \|a_k-b_k\|- r^{*}   \right)^{4\theta-2} \\
    & \geq 2 \gamma d_B(a_k)^2 \left(  d_B(a_k)-r^{*}\right)^{4\theta-2}.
    \end{split}
\end{align}
Here we have dropped the square term on the right and used $\|b_{k-1}-a_k\|\geq \|a_k-b_k\|=d_B(a_k)$.
Taking square roots and re-arranging gives 
\begin{equation}
\label{cosine}
    \left(d_B(a_k)-r^{*} \right)^{-2\theta+1} \geq \sqrt{2\gamma} d_B(a_k) \|b_{k-1}-b_k\|^{-1}.
\end{equation}
At this point we observe a junction, because when $r^*=0$, the term $d_B(a_k)$ on the right
matters and leads to the estimate $d_B(a_k)^{-2\theta} \geq \sqrt{2\gamma} \|b_{k-1} - b_k\|^{-1}$.
This case was handled in \cite[Theorem 1]{noll}, so we may for the moment concentrate on the infeasible case $r^* >0$.
The difference will be relevant in the next theorem when rates of convergence will be computed.

Here we use the fact that $s \mapsto s^{2-2\theta}$ is concave, so that
$s_1^{2-2\theta}  -s_2^{2-2\theta} \geq (2-2\theta) s_1^{1-2\theta} (s_1-s_2)$. We apply this to
$s_1 = d_B(a_k)-r^{*}$ and $s_2=d_B(a_{k+1})-r^{*}$ to obtain
\begin{align}
\label{est2}
\begin{split}
    \left[d_B(a_k)-r^{*}  \right]^{2-2\theta} - &\left[ d_B(a_{k+1})-r^{*}  \right]^{2-2\theta} \geq  \\
    &\geq (2-2\theta) \left[d_B(a_k)-r^{*}  \right]^{1-2\theta} \left( d_B(a_k)-d_B(a_{k+1})   \right)\\
    &\geq (2-2\theta)2^{1/2}\gamma^{1/2}d_B(a_k) \|b_{k-1}-b_k\|^{-1} \left( d_B(a_k)-d_B(a_{k+1})   \right)\\
    & \geq(2-2\theta) 2^{1/2}\gamma^{1/2}\ell  \frac{d_B(a_k)}{d_B(a_k)+d_B(a_{k+1})} \|b_{k-1}-b_k\|^{-1} \|b_k - b_{k+1}\|^2 \\
    & \geq(2-2\theta) 2^{-1/2}\gamma^{1/2}\ell   \|b_{k-1}-b_k\|^{-1} \|b_k - b_{k+1}\|^2,
    \end{split}
\end{align}
where the next to last line is obtained by applying (\ref{four}).
Multiplying by $\|b_{k-1}-b_k\|$ and putting $C=(2-2\theta)^{-1} 2^{1/2}\gamma^{-1/2}\ell^{-1}$, we get
\[
C\left( \left[d_B(a_k)-r^{*}  \right]^{2-2\theta} - \left[ d_B(a_{k+1})-r^{*}  \right]^{2-2\theta} \right) \|b_{k-1}-b_k\|
\geq \|b_k-b_{k+1}\|^2.
\]
using the fact that $a^2\leq bc$ implies $a\leq \frac{1}{2}b+\frac{1}{2}c$ for positive $a,b,c$, we deduce
\begin{equation}
\label{sole}
\|b_k-b_{k+1}\| \leq \frac{1}{2} \|b_k-b_{k-1}\| + \frac{C}{2}\left( \left[d_B(a_k)-r^{*}  \right]^{2-2\theta} - \left[ d_B(a_{k+1})-r^{*}  \right]^{2-2\theta} \right).
\end{equation}
Altogether, what we have proved in 1), 2) above is that $a_k,a_{k+1} \in V=\mathcal N(A^*,\epsilon)$
implies (\ref{sole}).

3)
Let us now define our $\delta > 0$. First choose $\delta > 0$ with $\delta < \eta$, $\delta < \epsilon/3$ and
$\delta'' := \sqrt{\delta (2r^*+\delta)/\ell} < \epsilon/6$,
and such that $a\in \mathcal N(A^*,\delta)$ implies $\delta' := \frac{C}{2} (d_B(a) - r^*)^{2-2\theta} < \frac{\epsilon}{3}$.
The latter is possible, since $d_B$ has constant value $r^*$ on $A^*$, so that $\delta'\to 0$ as $a$ gets closer to $A^*$, or what is the same,
$\delta' \to 0$ as $\delta \to 0$. Using this and the fact 
that the gap is saturated, shrink $\delta > 0$ further such that $P_A(B \cap \mathcal N(B^*,\delta+\delta'+2\delta'')) \subset \mathcal N(A^*,\epsilon)$. 
Note that we have
$\delta + \delta' + 2\delta'' < \epsilon$.
We show that $\delta$ is as claimed in the statement.

Relabeling the sequence, we may assume that we have reached
\begin{equation}
    \label{reached1}
b_0 \in \mathcal N(B^*,\delta),   \;  
\; a_1 \in \mathcal N(A^*,\delta), \; r^* <\|a_0-b_0\| < r^*+\delta.
\end{equation}
From this we first deduce $\|b_0-b_1\| < \delta''$. Indeed, from the three point estimate and (\ref{reached1})
we get $r^{*2} <\|a_0-b_0\|^2 + \ell \|b_0-b_1\|^2 \leq \|a_1-b_0\|^2 \leq (r^*+\delta)^2$, hence
$\ell \|b_0-b_1\|^2 \leq (r^*+\delta)^2-r^{*2} = \delta(2r^*+\delta)$, hence
$\|b_0-b_1\|\leq \sqrt{\delta(2r^*+\delta)/\ell} = \delta'' < \epsilon/6$. That gives
\begin{equation}
    \label{reached2}
    b_1  \in \mathcal N(B^*,\delta+\delta''), \quad
a_2 \in \mathcal N(A^*,\epsilon),
\end{equation}
the latter using $a_2\in P_A(b_1)$,  $b_1 \in B\cap \mathcal N(B^*,\delta+\delta'' )\subset B \cap \mathcal N(B^*,\delta+\delta'+\delta'')$, and
the fact that we assured  above that
$P_B(B \cap \mathcal N(B^*,\delta+\delta'+\delta'')) \subset \mathcal N(A^*,\epsilon)$.

4) We will now prove the following
two conditions by induction over $k\geq 1$:
\begin{equation}
\label{16}
    b_{k-1},b_{k} \in \mathcal N(B^*,\epsilon),\quad  a_k, a_{k+1} \in \mathcal N(A^*,\epsilon) 
\end{equation}
and
\begin{equation}
\label{17}
    \sum_{j=1}^{k} \|b_j-b_{j+1}\| \leq \frac{1}{2} \sum_{j=1}^{k} \|b_{j-1}-b_j\| + \frac{C}{2} \left([d_B(a_1)-r^{*}]^{2-2\theta}-[d_B(a_{k+1})-r^{*}]^{2-2\theta}   \right).
\end{equation}

Let us initialize the induction.
We prove $(\ref{16})_1$. Since the sequence has reached the neighborhood of the gap, we have 
(\ref{reached1}), (\ref{reached2}), and
since $\delta+\delta'' < \epsilon$, condition $(\ref{16})_1$ is clear. Now to prove $(\ref{17})_1$, since
$a_1,a_2\in \mathcal N(A^*,\epsilon)$ by $(\ref{16})_1$ just proved,   we get from part 1)-2) that
\[
\|b_1-b_2\| \leq \frac{1}{2} \|b_0-b_1\| + \frac{C}{2} \left([d_B(a_1)-r^{*}]^{2-2\theta}-[d_B(a_{2})-r^{*}]^{2-2\theta}   \right),
\]
which is just $(\ref{17})_1$. This settles initialization.

Let us now do the induction step.
Suppose $(\ref{16})_{k-1}$, $(\ref{17})_{k-1}$ are satisfied for some $k\geq 2$. We have to prove them
for $k$.  Let us first show that
$(\ref{16})_{k-1} \wedge (\ref{17})_{k-1} \implies (\ref{16})_k$. Indeed, from $(\ref{17})_{k-1}$ we get
\[
\sum_{j=1}^{k-1}\|b_j-b_{j+1}\| \leq \frac{1}{2} \sum_{j=1}^{k-1}\|b_{j-1}-b_j\| +  \frac{C}{2} \left([d_B(a_1)-r^{*}]^{2-2\theta}-[d_B(a_{k})-r^{*}]^{2-2\theta}   \right)
\]
hence
\begin{align}
\label{seefrom}
\sum_{j=1}^{k-1}\|b_j-b_{j+1}\|& \leq \|b_0-b_1\| + C \left([d_B(a_1)-r^{*}]^{2-2\theta}-[d_B(a_{k})-r^{*}]^{2-2\theta}\right)  - 2\|b_{k-1}-b_{k}\| \notag\\
&\leq \|b_0-b_1\| +  C [d_B(a_1)-r^{*}]^{2-2\theta}.
\end{align}
Therefore, if we fix $b^* \in B^*$ such that $\|b_1-b^*\| < \delta+\delta''$, then
\begin{align*}
    \|b_{k}-b^*\| &\leq \|b_{k}-b_1\| + \|b_1 - b^*\| \leq \sum_{j=1}^{k-1} \|b_j-b_{j+1}\| + \|b_1-b^*\|  \\
    &\leq \|b_0-b_1\| +  C [d_B(a_1)-r^{*}]^{2-2\theta} + \|b_1 - b^*\|\\
    & < \delta'' + \delta' + \delta + \delta'' < \epsilon,
\end{align*}
using (\ref{seefrom}) and $\delta+\delta'+2\delta'' < \epsilon$, 
so we are done for $b_{k}$. Now since $a_{k+1}\in P_A(b_k)$ and $b_k \in \mathcal N(B^*,\delta + \delta'+2\delta'')$, it also
follows that $a_{k+1} \in \mathcal N(A^*,\epsilon)$, because $P_A(B \cap \mathcal N(B^*,\delta+\delta'+2\delta''))\subset \mathcal N(A^*,\epsilon)$. 
This settles $(\ref{16})_k$.

Now by $(\ref{16})_{k}$ we have $a_k,a_{k+1} \in \mathcal N(A^*,\epsilon)$, hence the argument of
1) 2) gives us $(\ref{sole})_k$. But adding $(\ref{sole})_k$ and $(\ref{17})_{k-1}$ gives $(\ref{17})_k$.
That ends the induction step.

5) To conclude, as $(\ref{17})$ is now true for all $k$, we see e.g. from (\ref{seefrom})
that the series $\sum_{j=1}^\infty\|b_j-b_{j+1}\|$ converges, hence $b_k$ is a Cauchy sequence, which converges to some
$b^\sharp\in B$. But then (\ref{est1}) shows $d_B(a_k)-r^*\to 0$, so every accumulation point $a^\sharp\in A$  of the $a_k$ satisfies $\|b^\sharp -a^\sharp\|=r^*$,
and the gap $r^*$ is realized.
\hfill $\square$
\end{proof}

\begin{remark}
Saturatedness is used in (\ref{reached2}) to assure
that when $b_0,a_1$ have reached the neighborhood of the gap, the next iterate  $a_2$ stays close. For individual sequences
approaching their own gap this is automatically true, but  local attraction has to work simultaneously for {\em all} sequences
getting close to a given gap. Saturatedness is also redundant when $P_A$ is single valued at $b_1$, or when $r^*=0$, as shown in \cite{noll}.
\end{remark}

\begin{remark}
We stress that it is not claimed that $b^\sharp \in B^*$, nor do we have $A^\sharp \subset A^*$. This was already observed in \cite{noll} for
the feasible case. Observe a difference between the case $r^* > 0$ and the zero gap case. With $r^* > 0$ we do not readily 
obtain convergence of the $a_k$, while this holds when $r^*=0$. On the other hand we see that
$\sum_{k=1}^\infty \left(d_B(a_k) - r^* \right)^{2\theta-1} < \infty$. 
\end{remark}

\begin{theorem}
\label{rate}
{\bf (Rate of convergence for $r^*>0$).}
Under the hypotheses of Theorem {\rm \ref{theorem1}}, for $r^* >0$ the speed of convergence is
$\|b_k-b^*\| = O(k^{-\frac{1-\theta}{2\theta-3/2}})$ for  $\theta\in (\frac{3}{4},1)$. For $\theta=\frac{3}{4}$
convergence is R-linear.
For 
$\theta \in (\frac{1}{2},\frac{3}{4})$
convergence is R-linear with rate $\frac{1}{2}+\epsilon$, where $\epsilon > 0$
can be chosen arbitrarily small. For $\theta=\frac{1}{2}$ convergence is finite.
\end{theorem}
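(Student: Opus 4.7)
The plan is to distill from the proof of Theorem~\ref{theorem1} two key estimates, assemble them into a self-referential recursion for the tail sum $H_K := \sum_{j>K}\|b_{j-1}-b_j\|$, and analyze this recursion in four regimes according to $\theta$. Write $d_k := \|b_{k-1}-b_k\|$ and $u_k := (d_B(a_k)-r^*)^{2-2\theta}$. Inequality~(\ref{sole}) reads $d_{k+1} \leq \tfrac{1}{2} d_k + \tfrac{C}{2}(u_k - u_{k+1})$, while by~(\ref{four}) the sequence $d_B(a_k)$ is monotonically decreasing to $r^*$, so $u_k \downarrow 0$. Summing~(\ref{sole}) over $k \geq K$ and using $H_{K-1} - H_K = d_K$, I obtain the clean one-step bound $H_K \leq d_K + C u_K$. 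On the other hand, (\ref{est1}) combined with $d_B(a_k) \geq r^*/2 > 0$ for $k$ large---the one place where the hypothesis $r^* > 0$ is used---gives $d_k \geq C_1 (d_B(a_k)-r^*)^{2\theta-1}$, hence $u_k \leq C_2 d_k^q$ with $q := \frac{2-2\theta}{2\theta-1}$. Plugging in yields the master recursion
\[
H_K \leq d_K + C' d_K^q = (H_{K-1}-H_K) + C' (H_{K-1}-H_K)^q.
\]

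I then split on $\theta$. For $\theta = \tfrac{1}{2}$, the exponent $2\theta-1 = 0$ in~(\ref{est1}) forces $d_k \geq C_1 > 0$, incompatible with $d_k \to 0$ unless the sequence terminates in finitely many steps. For $\theta \in (\tfrac{1}{2}, \tfrac{3}{4})$ we have $q > 1$, so $d_K^q = o(d_K)$: given any $\epsilon > 0$, eventually $H_K \leq (1+\epsilon) d_K = (1+\epsilon)(H_{K-1}-H_K)$, whence $H_K \leq \frac{1+\epsilon}{2+\epsilon} H_{K-1}$, and sending $\epsilon \downarrow 0$ gives R-linear rate $\tfrac{1}{2} + \epsilon'$ for any $\epsilon' > 0$. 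For $\theta = \tfrac{3}{4}$, $q = 1$ yields $H_K \leq (1+C')(H_{K-1}-H_K)$, hence $H_K \leq \frac{1+C'}{2+C'} H_{K-1}$, R-linear with some fixed rate $< 1$. Finally for $\theta \in (\tfrac{3}{4}, 1)$, $q < 1$, so $d_K^q$ dominates and $H_K \leq C'' d_K^q$, i.e., $H_{K-1} - H_K \geq c_0 H_K^{1/q}$ with $1/q > 1$; the standard Kurdyka--\L ojasiewicz rate trick with $\phi(x) = x^{1-1/q}$ (decreasing, convex) produces $\phi(H_K) - \phi(H_{K-1}) \geq c$ up to a dichotomy separating off fast steps, so $\phi(H_K) \geq c K$ and inversion yields $H_K = O(K^{-q/(1-q)})$. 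The elementary identity $q/(1-q) = (1-\theta)/(2\theta - \tfrac{3}{2})$ then delivers the claimed exponent.

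The main obstacle is the last case, where converting $H_{K-1} - H_K \geq c_0 H_K^{1/q}$ into a polynomial decay requires the familiar KL dichotomy: either $H_K \leq \tfrac{1}{2} H_{K-1}$, which is already faster than any polynomial, or $H_{K-1} \leq 2 H_K$, in which case the mean value theorem applied to $\phi$ delivers the uniform drop $\phi(H_K) - \phi(H_{K-1}) \geq c > 0$. The three other cases reduce to one-line manipulations of the master recursion once it is in place, so the essential content of the statement is the identification of the threshold $\theta = \tfrac{3}{4}$ from the exponent $q = (2-2\theta)/(2\theta-1)$, with the factor $d_B(a_k)^2$ in~(\ref{est1}) being bounded away from zero precisely when $r^* > 0$.
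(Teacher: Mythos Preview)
Your proof is correct and follows essentially the same route as the paper's: both sum~(\ref{sole}) to obtain a recursion for the tail $S_N$ (your $H_K$), combine it with~(\ref{est1}) to replace $u_K=(d_B(a_K)-r^*)^{2-2\theta}$ by $C_2 d_K^{q}$ with $q=(2-2\theta)/(2\theta-1)$, and then split on whether $q\gtrless 1$. Your presentation is slightly cleaner (the ``master recursion'' $H_K\le d_K+C'd_K^{q}$ and the explicit KL dichotomy for $\theta\in(\tfrac34,1)$, where the paper defers to \cite[Cor.~4]{noll}); one cosmetic point is that you may use $d_B(a_k)\ge r^*$ directly rather than $r^*/2$.
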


\begin{proof}
1)
Summing (\ref{sole}) from $k=N$ to $k=M$ for $M > N$ gives
\[
-\frac{1}{2}\|b_N-b_{N-1}\| + \frac{1}{2}\sum_{k=N}^{M-1}\|b_k-b_{k+1}\| + \|b_M-b_{M+1}\|
\leq \frac{C}{2} \left(\left[ d_B(a_N)-r^{*} \right]^{2-2\theta} -  \left[d_B(a_{M+1}-r^{*} \right]^{2-2\theta} \right),
\]
and passing to the limit $M\to \infty$ leads to
\[
-\frac{1}{2}\|b_N-b_{N-1}\| + \frac{1}{2}\sum_{k=N}^{\infty}\|b_k-b_{k+1}\| \leq \frac{C}{2} \left[ d_B(a_N)-r^{*} \right]^{2-2\theta}.
\]
Introducing $S_N=\sum_{k=N}^{\infty}\|b_k-b_{k+1}\|$, we
have 
\begin{equation}
\label{all_the_way}
-\frac{1}{2} \left(S_{N-1}-S_N   \right) + \frac{1}{2} S_N \leq \frac{C}{2} \left[ d_B(a_N)-r^{*} \right]^{2-2\theta}.
\end{equation}
Now from (\ref{est1})
\begin{equation}
\label{this}
\left(d_B(a_N)-r^{*}\right)^{2-2\theta} \leq \left(\sqrt{2\gamma}\, d_B(a_N)  \right)^{\frac{2-2\theta}{1-2\theta}} \|b_{N-1}-b_N\|^{\frac{2-2\theta}{2\theta-1}}
\end{equation}
hence regrouping and using $d_B(a_N) < r^*+\eta$ gives
\begin{equation}
\label{use_again}
 \frac{1}{2}S_N \leq \frac{C}{2} \left(\sqrt{2\gamma}\, (r^*+\eta)  \right)^{\frac{2-2\theta}{1-2\theta}} (S_{N-1}-S_N)^{\frac{2-2\theta}{2\theta-1}}
 + \frac{1}{2}(S_{N-1}-S_N).
\end{equation}

2)
Now consider $\theta \in (\frac{3}{4},1)$, then the term $(S_{N-1}-S_N)^{\frac{2-2\theta}{2\theta-1}}$ ultimately
dominates $S_{N-1}-S_N$, so there exists a constant $C'>0$ such that for large enough $N$,
\[
S_N^{\frac{2\theta-1}{2-2\theta}} \leq C' (S_{N-1}-S_N).
\]
From here onward the proof follows exactly the line in \cite[Cor. 4]{noll}, and we arrive
at the estimate $S_M = O(M^{-\frac{2-2\theta}{4\theta-3}})$, which implies
$\|b_M-b^*\| = O(M^{-\frac{2-2\theta}{4\theta-3}})$ as $M\to\infty$.

 3)  For $\theta = \frac{3}{4}$ the estimate (\ref{use_again}) gives
$S_N \leq (1+C'')(S_{N-1}-S_N)$ with $C''=C\left(  \sqrt{2\gamma} (r^*+\eta) \right)^{-1}$. Hence
$S_N \leq \frac{1+C''}{2+C''} S_{N-1}$ gives Q-linear convergence $S_N \to 0$, hence R-linear convergence $b_N-b^*\to 0$.

4)
For $\theta \in (\frac{1}{2}, \frac{3}{4})$ the term $(S_{N-1}-S_N)^{\frac{2-2\theta}{2\theta-1}}$ is dominated by $S_{N-1}-S_N$, hence
we get $S_N \leq (1+C'')(S_{N-1}-S_N)$ with a constant $C''$ that can be made arbitrarily small.
Then $S_N \leq \frac{1+C''}{2+C''} S_{N-1}$ with a Q-linear rate that can be chosen arbitrarily close to $\frac{1}{2}$. 

5)
Finally, for $\theta =\frac{1}{2}$ 
the angle condition gives $1-\cos \alpha_k > \gamma > 0$, so the angles $\alpha_k=\angle (b_{k-1}-a_k,b_k-a_k)$ stay away from $0$. 
Since $r^* > 0$, this means $\|b_{k-1}-b_k\| \geq 2 \sin (\alpha_k/2) r^* > \epsilon > 0$. But since we proved in Theorem \ref{theorem1} that the sequence $b_k$
converges when it is infinite, we conclude that
the sequence
$b_k$ must converge finitely. 

\hfill $\square$
\end{proof}

\begin{corollary}
\label{old}
{\bf (Rate of convergence for $r^*=0$, cf. \cite{noll})}
Under the hypotheses of Theorem {\rm \ref{theorem1}}, now with $r^*=0$, the speed of convergence
is $\|b_k-b^*\| =O(k^{-\frac{1-\theta}{2\theta-1}})$, $\|a_k-b^*\|=O(k^{-\frac{1-\theta}{2\theta-1}})$ for $\theta\in (\frac{1}{2},1)$. For $\theta=\frac{1}{2}$ the speed is R-linear.
\end{corollary}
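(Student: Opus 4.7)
The plan is to follow the template of the proof of Theorem~\ref{rate}, taking the $r^*=0$ branch at the junction identified just after (\ref{cosine}). In that branch (\ref{est1}) reduces to $\|b_{k-1}-b_k\|^2 \geq 2\gamma\,d_B(a_k)^{4\theta}$, which rearranges to $d_B(a_k)^{2\theta} \leq (2\gamma)^{-1/2}\|b_{k-1}-b_k\|$. This plays the role of (\ref{cosine})--(\ref{this}), with the exponent $2\theta$ in place of $2\theta-1$; in particular the factor $d_B(a_k)$ on the right of (\ref{cosine}) no longer has to be carried along.

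With this substitution the chain (\ref{est2})--(\ref{sole}) goes through unchanged: concavity of $s\mapsto s^{2-2\theta}$ combined with the four-point estimate (\ref{four}) delivers
\[
\|b_k-b_{k+1}\| \leq \tfrac{1}{2}\|b_{k-1}-b_k\| + \tfrac{C}{2}\bigl(d_B(a_k)^{2-2\theta} - d_B(a_{k+1})^{2-2\theta}\bigr).
\]
Setting $S_N=\sum_{k=N}^\infty\|b_k-b_{k+1}\|$ and summing from $N$ to $\infty$ gives $S_N \leq (S_{N-1}-S_N) + C'\,d_B(a_N)^{2-2\theta}$. Substituting the new estimate in the form $d_B(a_N)^{2-2\theta} \leq C''(S_{N-1}-S_N)^{(1-\theta)/\theta}$ then produces the self-contained recursion $S_N \leq (S_{N-1}-S_N) + C''(S_{N-1}-S_N)^{(1-\theta)/\theta}$.

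For $\theta\in(\tfrac{1}{2},1)$ the exponent $(1-\theta)/\theta\in(0,1)$, so the second term asymptotically dominates and one obtains $S_N^{\theta/(1-\theta)} \leq C'''(S_{N-1}-S_N)$. The standard recursion lemma used in \cite[Cor.~4]{noll} and in part~2) of the proof of Theorem~\ref{rate}, applied with exponent $\alpha = \theta/(1-\theta)$, then yields $S_N = O\bigl(N^{-1/(\alpha-1)}\bigr) = O\bigl(N^{-(1-\theta)/(2\theta-1)}\bigr)$, and $\|b_N-b^*\|\leq S_N$ gives the claimed rate for $b_k$. To transfer this to $a_k$, note that in the feasible case $\|a_k-b_k\|=d_B(a_k)\to 0$ together with $b_k\to b^*$ forces $a_k\to b^*$ and $b^*\in A$; hence $\|a_{k+1}-b_k\| = d_A(b_k) \leq \|b^*-b_k\|$, so $\|a_{k+1}-b^*\| \leq 2\|b_k-b^*\|$ carries the same rate over for free.

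For the boundary case $\theta=\tfrac{1}{2}$ the angle condition keeps $\alpha_k$ bounded away from $0$; from $\|b_{k-1}-b_k\|^2 \geq 2\gamma\,d_B(a_k)^2$ together with (\ref{four}) one obtains a strict contraction $d_B(a_{k+1})^2 \leq (1+2\gamma\ell)^{-1}d_B(a_k)^2$, hence geometric decay of $S_N$ and R-linear convergence of both sequences. The main technical obstacle is the analytic step that converts the one-step inequality $S_N^{\alpha}\leq C(S_{N-1}-S_N)$ with $\alpha>1$ into a polynomial rate for $S_N$; this is a classical KL-type ingredient imported from \cite[Cor.~4]{noll} without modification.
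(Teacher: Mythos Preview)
Your proposal is correct and follows essentially the same route as the paper's proof: both take the $r^*=0$ branch at the junction after (\ref{cosine}), which replaces (\ref{this}) by $d_B(a_N)^{2-2\theta}\leq C''\|b_{N-1}-b_N\|^{(1-\theta)/\theta}$, feed this into (\ref{all_the_way}), and invoke the recursion lemma from \cite[Cor.~4]{noll} with exponent $\theta/(1-\theta)$ to obtain the rate $O(k^{-(1-\theta)/(2\theta-1)})$. You have in fact spelled out more detail than the paper, which is quite terse here---in particular your transfer of the rate to the $a_k$ via $b^*\in A$ and $\|a_{k+1}-b^*\|\leq 2\|b_k-b^*\|$, and your explicit contraction argument for $\theta=\tfrac12$, are both correct and make explicit what the paper leaves to the reader or to \cite{noll}.
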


\begin{proof}
We can go all the way till (\ref{all_the_way}) in the above proof. But now due to $r^*=0$, (\ref{est1}) reads
$d_B(a_k)^{-2\theta} \geq \sqrt{2\gamma} \|b_{k-1}-b_k\|^{-1}$. Then we get the estimate
$d_B(a_k)^{2-2\theta} \leq (2\gamma)^{\frac{\theta-1}{2\theta}} \|b_{k-1}-b_k\|^{\frac{1-\theta}{\theta}}$
replacing (\ref{this}). As seen in \cite[Cor. 4]{noll}, this leads to the slightly slower rate
$\|b_k-b^*\|=O(k^{-\frac{1-\theta}{2\theta-1}})$, which due to $r^*=0$ then also holds for the $a_k$. 
\hfill$\square$
\end{proof}

\begin{remark}
Consider $\phi(x)=1+\frac{1}{2}x^2$ and $B={\rm epi}(\phi)\subset \mathbb R^2$, $A$ the $x$-axis. Then convergence of alternating projections
to the gap $(\{(0,0)\},\{(0,1)\},1)$ is with asymptotic linear rate $\frac{1}{2}$.  With some more elementary calculus one can show that 
$f = i_A + \frac{1}{2}(d_B-1)^2$ has \L ojasiewicz exponent $\theta = \frac{3}{4}$, which corroborates the statement of Theorem \ref{rate} for that case.

If we shift the set $B$ down by letting $\psi(x)=\frac{1}{2}x^2$, $B={\rm epi}(\psi)$, so that $A,B$ touch at the origin,
then even though $f = i_A + \frac{1}{2} d_B^2$ still has \L ojasiewicz exponent $\theta=\frac{3}{4}$, this now in accordance with
Corollary \ref{old}
only assures a sublinear rate $O(k^{-1/2})$. 
Since $A,B$ are convex, this is not surprising, as here linear convergence would require $A,B$ to intersect {\it at an angle}, and not
tangentially. For the \L ojasiewicz exponent of $i_A + \frac{1}{2}d_B^2$ in the convex case see also \cite{bolte_new},
and for general considerations as to obtaining optimal $\theta$ see \cite{feenham}.
\end{remark}

\begin{corollary}
\label{global}
{\bf (Global convergence for $r^*>0$).} Let $a_k,b_k$ be a bounded prox-alternating sequence with gap $\|a_k-b_k\|\to r^* >0$.
Suppose $B^s$ satisfies the angle condition 
for that gap with exponent $\omega = 4\theta -2$,  $\theta \in (\frac{3}{4},1)$ and constant $\gamma > 0$, and suppose the gap is $\omega/2$-H\"older regular
with constant $c< \gamma/2$. Then $b_k \to b^*$ for some $b^*\in B$ with rate $\|b_k-b^*\| = O(k^{-\frac{1-\theta}{2\theta-3/2}})$. 
For $\theta=\frac{3}{4}$ the speed is R-linear.
\end{corollary}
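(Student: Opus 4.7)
The plan is to reduce the global statement to the local attraction result Theorem~\ref{theorem1} and the rate computation Theorem~\ref{rate}, by converting the prox-alternating sequence into a traditional alternating sequence between the sets $A^s, B^s$ introduced in~(\ref{s}).

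First, I would invoke the remark following~(\ref{s}) to view the sequence $a_k \to b_k \stackrel{p}{\to} a_{k+1}$ as a genuine alternating sequence between $A^s$ and $B^s$: the prox-step onto $A$ is rendered global on $A^s$ because the points of $A$ that could obstruct minimality have been removed by construction of $A^s$. The gap of accumulation points is unchanged by this relabeling, and by the saturation remark in Section~\ref{sect_convergence}, the gap $(A^*,B^*,r^*)$ is automatically saturated with respect to $A^s, B^s$. Thus all structural hypotheses of Theorem~\ref{theorem1} are in place.

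Second, the hypotheses ensure that the angle condition holds for $B^s$ with exponent $\omega=4\theta-2$ and that the gap is $\omega/2$-H\"older regular with constant $c<\gamma/2$. These are exactly the geometric conditions required in Theorem~\ref{theorem1}, so it only remains to check that the alternating sequence reaches some $\delta$-neighborhood of the gap. But $\|a_k-b_k\|\to r^*$ by assumption, while $A^*, B^*$ are populated by the iterates in every neighborhood of themselves; hence for the $\delta$ furnished by Theorem~\ref{theorem1} there exists $k_0$ with $b_{k_0}\in \mathcal{N}(B^*,\delta)$, $a_{k_0+1}\in \mathcal{N}(A^*,\delta)$ and $r^* < \|a_{k_0}-b_{k_0}\| < r^*+\delta$, the strict inequality being automatic unless the gap value is attained in finite time, in which case the conclusion is trivial. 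Theorem~\ref{theorem1} then yields convergence $b_k\to b^*$ with $d_A(b^*)=r^*$.

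Third, for the rate I would simply re-run the argument of Theorem~\ref{rate}: its derivation depends only on the three-point estimate from Lemma~\ref{three-point}, the cosine-theorem bound~(\ref{est1}), and the decay $d_B(a_k)-r^*\to 0$, each of which carries over verbatim to the $(A^s, B^s)$ picture. The main obstacle to watch is the passage from the originally given sets $A,B$ to the sequentially enriched sets $A^s, B^s$: one must verify that angle condition, H\"older regularity and saturation all persist under this augmentation. This is, however, precisely the content of the remarks in Sections~\ref{sect_angle}, \ref{sect_holder} and~\ref{sect_convergence}, which observe that the conditions for the alternating sequence itself coincide with the corresponding conditions against $A^s, B^s$. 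Consequently Theorems~\ref{theorem1} and~\ref{rate} apply without change, delivering the claimed convergence $b_k\to b^*$ and the rate $\|b_k-b^*\|=O(k^{-(1-\theta)/(2\theta-3/2)})$ for $\theta\in(\tfrac{3}{4},1)$, with R-linear speed at $\theta=\tfrac{3}{4}$.
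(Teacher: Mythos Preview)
Your proposal is correct and follows essentially the same route as the paper: convert to a traditional alternating sequence between $A^s,B^s$, note that the gap is automatically saturated for these sets, observe that the sequence necessarily reaches the $\delta$-neighborhood of its own gap, and then invoke Theorem~\ref{theorem1} together with Theorem~\ref{rate}. Your write-up is somewhat more explicit about why the sequence reaches the gap and why the hypotheses transfer to $A^s,B^s$, but the argument is the same.
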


\begin{proof}
Let $A^*,B^*$ be the sets of accumulation points of the sequences $a_k,b_k$, then $(A^*,B^*,r^*)$ is a saturated gap
for $A^s,B^s$.
By hypothesis the set $B^s$ satisfies the angle condition with exponent $\omega = 4\theta-2$,  $\theta\in [\frac{3}{4},1)$ and constant $\gamma > 0$
for this gap, and moreover the gap is $\omega/2$-H\"older regular with constant $c < \gamma/2$.
The alternating sequence therefore automatically reaches the gap, and
the main convergence theorem with the underlying sets $A^s,B^s$  implies convergence of the $b_k$. The speed of convergence 
follows from Theorem \ref{rate}, which in terms of $\omega$ is $O(k^{-\frac{2-\omega}{2\omega-2}})$.
\hfill $\square$
\end{proof}

The corresponding global convergence theorem for the case $r^*=0$ is obtained in the same way using using the sets $A^s,B^s$ and
\cite[Theorem 1]{noll}, which leads to the rate $\|b_k-b^*\| = O(k^{-\frac{1-\theta}{2\theta-1}})=O(k^{-\frac{2-\omega}{2\omega}})$, and also
$\|a_k-b^*\| = O(k^{-\frac{1-\theta}{2\theta-1}})$.

\begin{corollary}
\label{subanalytic}
{\bf (Subanalytic sets).}
Suppose $A,B$ are closed subanalytic sets and  $B$ is prox-regular. Let
$a_k,b_k$ be any bounded prox-alternating sequence approaching its gap with value $r^* < R$, where $R>0$ is the reach of $B$ at the points of $B^*$.  
Then the $b_k$ converge with speed $\|b_k-b^*\| = O(k^{-\rho})$ for some $\rho>0$. 
\end{corollary}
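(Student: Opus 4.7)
My plan is to match the two geometric conditions required by Corollary \ref{global} (the angle condition and H\"older regularity) with the two hypotheses at hand (subanalyticity of $A,B$ and prox-regularity of $B$), and then read off the rate from Theorem \ref{rate} or, in the case $r^*=0$, from the rate statement that follows Corollary \ref{old}.

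First I would let $(A^*,B^*,r^*)$ denote the gap of the bounded prox-alternating sequence. Because $A$ and $B$ are closed subanalytic, the corollary following Lemma \ref{second} furnishes a \L ojasiewicz exponent $\theta\in [\tfrac{1}{2},1)$ and a constant $\gamma>0$ for which
\[
\frac{1-\cos\alpha_k}{(\|a_k-b_k\|-r^*)^{4\theta-2}}\geq \gamma
\]
holds along the sequence; equivalently, the angle condition holds with exponent $\omega:=4\theta-2\in [0,2)$ and constant $\gamma$. Since a \L ojasiewicz inequality with exponent $\theta_0$ persists for every $\theta\geq\theta_0$ (after possibly shrinking the neighbourhood), I may assume without loss of generality $\theta\in (\tfrac{1}{2},1)$, so that $\omega/2=2\theta-1$ lies strictly inside $(0,1)$.

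Second, I use prox-regularity. By assumption $B$ is prox-regular at the points of $B^*$ with reach at least $R>r^*$, so Corollary \ref{cor3} applies to the prox-alternating sequence and gives $\omega/2$-H\"older regularity of the gap for the recursive sets $A^s,B^s$ of (\ref{s}) with any prescribed constant; I choose it to be some $c<\gamma/2$. The gap $(A^*,B^*,r^*)$ is moreover automatically saturated with respect to $A^s,B^s$ (remark following Theorem \ref{theorem1}), so all hypotheses of Theorem \ref{theorem1} are now in place. Invoking Corollary \ref{global} (for $r^*>0$), or the analogous statement based on Theorem \ref{theorem1} together with the rate discussion after Corollary \ref{old} (for $r^*=0$), produces convergence $b_k\to b^*\in B$ with $d_A(b^*)=r^*$. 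The explicit rate then comes from Theorem \ref{rate} or Corollary \ref{old} depending on the sign of $r^*$ and on the sub-range of $\theta$: in every sub-case ($\theta\in (\tfrac{1}{2},\tfrac{3}{4})$, $\theta=\tfrac{3}{4}$, $\theta\in (\tfrac{3}{4},1)$) the bound $\|b_k-b^*\|=O(k^{-\rho})$ holds for some $\rho>0$, with finite, R-linear or algebraic decay respectively.

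The main obstacle, such as it is, is aligning the two exponents $\omega$ (from the angle condition) and $\sigma$ (from H\"older regularity) so that the matching requirement $\sigma=\omega/2$ of Theorem \ref{theorem1} is met. This only works because Corollary \ref{cor3} leaves $\sigma$ and $c$ completely free, and because the \L ojasiewicz exponent can be inflated slightly above $\tfrac{1}{2}$ to guarantee $\sigma\in (0,1)$; once this bookkeeping is done the statement reduces to a direct application of already established theorems, and no new estimates are required.
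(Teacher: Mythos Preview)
Your proposal is correct and follows essentially the same route as the paper: subanalyticity supplies the angle condition via the \L ojasiewicz inequality (the corollary after Lemma~\ref{second}), prox-regularity of $B$ with reach exceeding $r^*$ supplies H\"older regularity with freely chosen $\sigma$ and $c$ (Corollary~\ref{cor3}), and then the main convergence machinery finishes the job. One cosmetic point: Corollary~\ref{global} is stated only for $\theta\in[\tfrac34,1)$, so either inflate $\theta$ into that range (as your own remark on \L ojasiewicz exponents permits) or bypass it and appeal directly to Theorem~\ref{theorem1} on $A^s,B^s$ together with Theorem~\ref{rate}, which covers all $\theta\in[\tfrac12,1)$.
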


\begin{proof}
Let $A^*,B^*$ be the set of accumulation points of the sequences $a_k,b_k$. 
Since $B$ is prox-regular and $B^*$ is compact, $B$ has positive reach $R>0$ at the points of $B^*$. Then by Corollary \ref{cor3}
every gap with $r^* < R$ is H\"older regular on a neighborhood $V$ of $A^*$. By Proposition \ref{prox} we may also assume that the angle condition is satisfied 
on this neighborhood, and
Lemma \ref{three-point} then gives the three-point inequality on  $V$. 
That means the argument 1)+2) in the proof of Theorem \ref{theorem1} works as long as $a_k,a_{k+1}\in V$.

But  $P_A(b_k) \in V$ from some counter $k$ onward, so that whenever the argument above produces a new
$b_{k+1}$ satisfying (\ref{sole}), we have $a_{k+2} = P_A(b_{k+1})\in V$, so that we can iterate the procedure. Therefore
by the main convergence theorem the sequence $b_k$ converges to a $b^*\in B$ realizing the gap $r^*$. 
The speed of convergence is governed by Theorem \ref{rate}.
\hfill $\square$
\end{proof}

The main convergence theorem derives convergence from the angle condition in tandem with the
four-point estimate (\ref{four}). H\"older regularity is only used to prove the latter, but is not used directly in the proof of
Theorem \ref{theorem1}, and similarly already in \cite{noll}. We therefore have the following

\begin{corollary}
Let $a_k,b_k$ be a bounded alternating sequence between $A,B$ such that building blocks $a_{k-1}\to b_{k-1} \to a_k\to b_k$
satisfy the four-point estimate
{\rm (\ref{four})} with the same $\ell >0$.  Suppose $B$ satisfies the angle condition for the gap generated by the alternating sequence.
Then $b_k \to b^*$ for some $b^*\in B$  with speed $O(k^{-\rho})$ for some $\rho >0$.
\hfill $\square$
\end{corollary}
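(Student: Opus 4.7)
The plan is to observe that the four-point estimate (\ref{four}) and the angle condition are precisely the two analytic ingredients used in the convergence proof of Theorem \ref{theorem1}; H\"older regularity only enters there indirectly, via Lemma \ref{three-point}, to derive the three-point and hence the four-point estimate. Since both ingredients are handed to us by hypothesis, we simply skip that derivation and run the same argument on the underlying sets $A^s,B^s$ from (\ref{s}). Concretely, I would let $A^*,B^*$ denote the accumulation sets of $a_k,b_k$, so that $(A^*,B^*,r^*)$ is the gap of the sequence; by the remark that every accumulation-point gap is saturated with regard to its $A^s,B^s$, saturatedness is for free. The angle condition with exponent $\omega = 4\theta-2$ and constant $\gamma>0$ holds by hypothesis on this gap, and the four-point estimate (\ref{four}) with uniform $\ell>0$ holds by hypothesis along the entire sequence.

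The core of the argument replays steps 1) and 2) of the proof of Theorem \ref{theorem1}: the cosine theorem applied to the building block $b_{k-1}\to a_k \to b_k$, combined with the angle condition, yields (\ref{est1}) and then (\ref{cosine}); applying concavity of $s\mapsto s^{2-2\theta}$ to $s_1 = d_B(a_k)-r^*$ and $s_2 = d_B(a_{k+1})-r^*$, together with the four-point estimate, then produces the core recursion (\ref{sole}) with a constant $C$ depending only on $\theta,\gamma,\ell$. Since the $a_k$ eventually lie in any prescribed neighborhood $\mathcal N(A^*,\epsilon)$ and saturatedness for $A^s,B^s$ controls the $P_A$-projections in the inductive step, the telescoping argument of parts 4)--5) of the proof of Theorem \ref{theorem1} goes through verbatim and yields $\sum_k \|b_k-b_{k+1}\|<\infty$. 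Hence $b_k\to b^*$ for some $b^*\in B$, and (\ref{est1}) forces $d_B(a_k)\to r^*$, so that $b^*$ realizes the gap.

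The polynomial rate $O(k^{-\rho})$ then follows from Theorem \ref{rate} when $r^*>0$ and from Corollary \ref{old} when $r^*=0$; both proofs use only the recursion (\ref{sole}) and the estimate (\ref{four}), with $\rho>0$ determined by $\theta$. The only point that needs a little extra attention is the $r^*=0$ case, where the passage from (\ref{est1}) to (\ref{cosine}) retains the factor $d_B(a_k)\to 0$ on the right rather than being bounded below by $r^*>0$; this bifurcation was already flagged after (\ref{cosine}) in the proof of Theorem \ref{theorem1}, and is handled by invoking the slightly slower rate of Corollary \ref{old} in place of Theorem \ref{rate}. This is the step I would expect to require the most attention, but it is essentially bookkeeping rather than a genuine obstacle, since the stand-alone status of the four-point estimate is exactly what allowed us to decouple H\"older regularity from convergence in the first place.
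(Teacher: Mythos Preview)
Your proposal is correct and takes essentially the same approach as the paper. The paper does not give a separate proof (the $\square$ is in the statement itself); it merely observes in the sentence preceding the corollary that the proof of Theorem~\ref{theorem1} uses H\"older regularity only through Lemma~\ref{three-point} to produce the four-point estimate~(\ref{four}), so that assuming~(\ref{four}) directly and invoking the angle condition lets the argument run through on $A^s,B^s$ exactly as you describe, with the rate coming from Theorem~\ref{rate} or Corollary~\ref{old}.
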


\begin{remark}
This means we can understand (\ref{four}) as a regularity property replacing convexity,
which in tandem with the angle condition assures convergence
with rate. In particular, for $r^* >0$, an R-linear rate is obtained from (\ref{four}) and the angle condition (\ref{angle1}) with $\omega =1$,
while for $r^*=0$, the R-linear rate occurs under (\ref{four})  and the angle condition with $\omega =0$.  
\end{remark}

Let us look for conditions under which not only the sequence $b_k$, but also
the $a_k$, converge. This is obviously the case when the hypotheses in the main convergence theorem or
in corollaries \ref{global}, \ref{subanalytic} are satisfied symmetrically, and we leave this to the reader.
The following observation is also useful.

\begin{remark}
Let $A,B$ be prox-regular and suppose an alternating sequence $a_k,b_k$ within reach of both sets is generated.
Then  the gap $(A^*,B^*,r^*)$ of accumulation points of the $a_k,b_k$ has the property that
$P_A:B^*\to A^*$ is a bijection and $P_B:B^*\to A^*$ is its inverse. In that case, if one of the sequences converges,
then so does the other. 
\end{remark}

This is for instance used in the following, where we recall from \cite{shiota} that semi-algebraic sets
are those which satisfy  (\ref{subanalytic}) with $\phi_{ij}, \psi_{ij}$ polynomials:

\begin{corollary}
{\bf (See \cite{chinese})}.
Suppose $A,B$ are semi-algebraic sets, and let $a_k,b_k$ be a bounded alternating sequence satisfying the
three point inequality. Suppose there exists $L>0$ such that $\|P_A(b)-P_A(b')\| \leq L \|b-b'\|$ for $b,b' \in B^s$.
Then $b_k \to b^*\in B$ and $a_k \to a^*\in A$, both with rate $O(k^{-\rho})$ for some $\rho > 0$.
\end{corollary}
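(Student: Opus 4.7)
The plan is to combine three ingredients already in place: the subanalytic angle condition from Section~\ref{sect_angle}, the four-point estimate that follows automatically from the assumed three-point inequality, and a Lipschitz transfer from the convergence of $b_k$ to that of $a_k$.

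First I would observe that semi-algebraic sets are subanalytic, so the subanalytic corollary at the end of Section~\ref{sect_angle} yields an exponent $\theta\in[\tfrac12,1)$ and a constant $\gamma>0$ for which the angle condition holds along the sequence with $\omega=4\theta-2$. Second, the hypothesis that the three-point inequality (\ref{three_point_estimate}) holds along the building blocks delivers the four-point estimate (\ref{four}) with a uniform $\ell>0$, by the same algebraic reduction used at the beginning of the proof of Theorem~\ref{theorem1}. With these two items in hand, the corollary stated immediately before the present one (convergence from the four-point estimate plus angle condition) applies to the underlying sets $A^s,B^s$ and produces some $b^*\in B$ with $b_k\to b^*$ and rate $\|b_k-b^*\|=O(k^{-\rho})$ for some $\rho>0$.

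For the $a_k$ side I would invoke the Lipschitz hypothesis on $P_A|_{B^s}$. Since $a_{k+1}\in P_A(b_k)$ with $b_k\in B^s$, the bound
\[
\|a_{k+1}-a_{m+1}\|=\|P_A(b_k)-P_A(b_m)\|\leq L\,\|b_k-b_m\|
\]
holds along the orbit, so the Cauchy property of $(b_k)$ forces $(a_k)$ to be Cauchy as well, and $a_k\to a^*$ for some $a^*\in A$. Passing $m\to\infty$ in this inequality, and using that $b^*\in B^*\subset B^s$ so the Lipschitz estimate remains valid at the limit, gives $\|a_{k+1}-a^*\|\leq L\,\|b_k-b^*\|=O(k^{-\rho})$, which is the claimed common rate.

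I do not anticipate a genuine obstacle: the heavy lifting has been done in Theorems~\ref{theorem1} and~\ref{rate}, the preceding corollary, and the subanalytic corollary of Section~\ref{sect_angle}. The only point requiring a moment of care is the very last step, namely checking that the Lipschitz hypothesis (stated on $B^s$, which by construction contains $B^*$) legitimately extends to the limit $b^*$, so that the rate for the $b_k$ transfers one-to-one to the $a_k$; this is exactly why $B^s$ is defined to include the accumulation points.
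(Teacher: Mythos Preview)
Your proposal is correct and follows essentially the same route as the paper: semi-algebraicity gives the angle condition via the subanalytic corollary, the assumed three-point inequality yields the four-point estimate, the preceding corollary (or equivalently steps 1)--2) of Theorem~\ref{theorem1}) then forces $b_k\to b^*$ with rate $O(k^{-\rho})$, and the Lipschitz hypothesis on $P_A|_{B^s}$ carries this over to the $a_k$. Your treatment of the Lipschitz transfer is more explicit than the paper's one-line remark, and your observation that $b^*\in B^*\subset B^s$ is exactly the right justification for applying the Lipschitz bound at the limit.
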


\begin{proof}
With the three-point estimate satisfied by hypothesis, and with the angle condition
satisfied by Lemma \ref{angle},
we get
a neighborhood $V$ of $A^*$
on which the argument 1)-2) in the proof of Theorem \ref{theorem1} works. For $k$ large enough, we have $P_A(b_k)\in V$, hence
condition (\ref{sole}) can be reproduced, and that gives convergence of the $b_k$. Convergence of the $a_k$
then follows easily with the Lipschitz condition.
\hfill $\square$
\end{proof}

We close this section by considering the averaged projection method. For closed sets $C_1,\dots,C_m$ in $\mathbb R^n$, the method
iterates as follows: Given
the current average $x\in \mathbb R^n$, compute projections $x_i \in P_{C_i}(x)$, and form the new average
$x^+ = \frac{1}{m}(x_1+\dots+x_m)$.

\begin{corollary}
{\bf (Averaged projections)}.
Let $C_1,\dots,C_m$ be subanalytic, and let $x^k$ be a bounded sequence of averaged  projections.
Then the $x^k$ converge to a limit average $x^*$ with rate $\|x^k-x^*\|=O(k^{-\rho})$ for some $\rho > 0$. If 
$(x_1^*,\dots,x_m^*)$ is any of the accumulation points of the projections $(x_1^k,\dots,x_m^k)$ with $x_i^k\in P_{C_i}(x^k)$, then
$\frac{1}{m}(x_1^*+\dots+x_m^*) = x^*$ and $x_i^*\in P_{C_i}(x^*)$. 
\end{corollary}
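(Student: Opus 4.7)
The plan is to lift the averaged-projection scheme to an alternating-projection scheme in a product space, where it becomes a direct application of Corollary \ref{subanalytic}. Working in $\mathbb{R}^{nm}$ with the standard Euclidean inner product, I would set $\mathbf{A} = C_1 \times \cdots \times C_m$ and $\mathbf{B} = \{(x,\ldots,x) : x \in \mathbb{R}^n\}$. Then $P_{\mathbf{A}}$ acts coordinatewise, while the projection onto the diagonal satisfies $P_{\mathbf{B}}(y_1,\ldots,y_m) = (\bar{y},\ldots,\bar{y})$ with $\bar{y} = m^{-1}\sum_i y_i$, since $\bar{y}$ is the unique minimizer of $\sum_i \|x - y_i\|^2$. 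Taking $\mathbf{b}_k = (x^k,\ldots,x^k)$ and $\mathbf{a}_{k+1} \in P_{\mathbf{A}}(\mathbf{b}_k)$, the alternating iteration reproduces the averaged-projection update $x^{k+1} = m^{-1}\sum_i x_i^k$ with $x_i^k \in P_{C_i}(x^k)$ exactly.

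Next I would verify the hypotheses of Corollary \ref{subanalytic} for the pair $(\mathbf{A},\mathbf{B})$. The set $\mathbf{A}$ is subanalytic as a finite Cartesian product of subanalytic sets. The diagonal $\mathbf{B}$ is an affine subspace, hence simultaneously subanalytic, convex, and prox-regular with reach $R = \infty$ at every point; in particular the condition $r^* < R$ holds trivially for any finite gap. Boundedness of $\{x^k\}$ transfers at once to boundedness of the pairs $(\mathbf{a}_k,\mathbf{b}_k)$, and the monotone decrease of $\|\mathbf{a}_k - \mathbf{b}_k\|$ ensures that the alternating sequence is (a fortiori) prox-alternating and approaches its gap value $r^* = \inf_k \|\mathbf{a}_k - \mathbf{b}_k\|$.

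Corollary \ref{subanalytic} now delivers $\mathbf{b}_k \to \mathbf{b}^* = (x^*,\ldots,x^*)$ with rate $\|\mathbf{b}_k - \mathbf{b}^*\| = O(k^{-\rho})$, and since $\|\mathbf{b}_k - \mathbf{b}^*\| = \sqrt{m}\,\|x^k - x^*\|$, the averages inherit the same rate. For the accumulation-point assertion, if $(x_1^*,\ldots,x_m^*)$ is a cluster point of $\mathbf{a}_{k+1} = (x_1^k,\ldots,x_m^k)$ along some subsequence, then continuity of $P_{\mathbf{B}}$ forces $\mathbf{b}_{k+1} = P_{\mathbf{B}}(\mathbf{a}_{k+1}) \to (\bar{x}^*,\ldots,\bar{x}^*)$ along that subsequence, with $\bar{x}^* = m^{-1}\sum_i x_i^*$. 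But this limit must coincide with $\mathbf{b}^* = (x^*,\ldots,x^*)$, yielding $m^{-1}\sum_i x_i^* = x^*$. The inclusion $x_i^* \in P_{C_i}(x^*)$ then follows from outer semicontinuity (closed graph) of the set-valued projection onto the closed set $C_i$, applied to $x_i^k \in P_{C_i}(x^k)$ with $x^k \to x^*$ and $x_i^k \to x_i^*$ along the subsequence.

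The only step that requires any care is the product-space reformulation, and it is routine once one observes that projection onto the diagonal is averaging. No genuine obstacle arises: because $\mathbf{B}$ is a linear subspace, every prox-regularity, reach and subanalyticity hypothesis needed by Corollary \ref{subanalytic} is vacuously or automatically satisfied, so the averaged-projection corollary reduces cleanly to the alternating-projection result already established.
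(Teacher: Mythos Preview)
Your proof is correct and follows essentially the same route as the paper: both lift the averaged-projection iteration to alternating projections between the product $C_1\times\cdots\times C_m$ and the diagonal, observe that the diagonal is a linear subspace (hence subanalytic with infinite reach), and then invoke the subanalytic convergence result to obtain convergence of the diagonal iterates with rate $O(k^{-\rho})$. Your treatment is in fact a bit more explicit than the paper's, spelling out the closed-graph argument for $x_i^*\in P_{C_i}(x^*)$ and the averaging identity $m^{-1}\sum_i x_i^* = x^*$, which the paper only states.
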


\begin{proof}
As is well-known, we may interpret the situation as alternating projections between
$A=C_1 \times \dots \times C_m$ and the diagonal $B = \{(x,\dots,x): x \in \mathbb R^n\}$. Both sets are
subanalytic, and $B$ is convex, hence the main convergence theorem gives global
convergence of the $B$ iterates, hence of the $x^k$, at rate $O(k^{-\rho})$. As in the general case, a priori nothing can be said about
convergence of the $(x_1^k,\dots,x_m^k)\in C_1 \times\dots \times C_m$, but any of their accumulation points
realizes the gap value $\sum_{i=1}^m (x_i^*-x^*)^2 = mV(x_1^*,\dots,x_m^*)$, which is $m$ times the biased sample variance. 
\hfill $\square$
\end{proof}

In other words, all accumulation points $(x_1^*,\dots,x_m^*)$ of the projected vector have the same sample mean $x^*$ and the same sample variance. 
Naturally, conditions which assure convergence to a single limit are obtained in much the same way as for the general case. For instance, if $m-1$ of the $m$ projections are single valued at $b^*$, then $A^*$ is singleton. A probabilistic interpretation of this result in terms of the EM-algorithm
will be given attention in section \ref{EM}.

\section{Gerchberg-Saxton}
\label{sect_GS}
In phase retrieval one has to determine
an unknown signal $\x(t)$ with physical
coordinates $t=0,\dots,N-1$ from measurements
$|\widehat{\x}(\omega)|^2=m(\omega)^2$ of its Fourier magnitude
obtained
at frequency coordinates
$\omega=0,\dots,N-1$. Given the magnitude $m(\omega)$, we have to recover the
unknown phase
$\widehat{\x}(\omega)/|\widehat{\x}(\omega)|$ of the signal, hence the name. 
As this is generally
an under-determined problem, 
prior information about the unknown $\x(t)$
under the form of a constraint
$\x\in \A$ is added. For instance
in electron microscopy $\x\in \A$ accounts for a second
set of measurements
of the physical domain amplitude or intensity $|\x(t)|^2$, 
while in other situations 
$\x\in \A$ could stand for a pattern like sparsity, 
prior information about the spatial
localization, non-negativity, and much else. An exact solution of the phase retrieval problem
would then be an object $\x\in \mathbb C^N$ with pattern $\x\in \A$ satisfying
$|\widehat{\x}|=m$. Since due to
noisy measurements an exact solution is rarely possible,
the measured data may lead us to accept
pairs $\x^*,\y^*\in \mathbb C^N$
as generalized solutions, where $\y^*$ is a phase retrieval for $\x^*$, and $\x^*$
is a pattern, or prior, for $\y^*$. In other words,
$$ \x^*\in P_\A(\y^*), \; \widehat{\y}^*=m \cdot\widehat{\x}^*/|\widehat{\x}^*|,$$
or in fixed-point terminology:
$$
\x^*=P_\A\left((m\cdot\widehat{\x}^*/|\widehat{\x}^*|)^\sim\right), \; \; \y^* =\big(m\cdot\widehat{P_\A(\y^*)}/|\widehat{P_\A(\y^*)}|\big)^\sim,
$$
where $\sim$ is the inverse Fourier transform.
The Gerchberg-Saxton error reduction method is now the following iterative
procedure:

\begin{algorithm}[!ht]
\caption{\!\!{\bf .} Gerchberg-Saxton error reduction}
\begin{algorithmic}[1]
\STEP{Adjust magnitude} Given current iterate $\x\in \A$, compute Fourier transform $\widehat{\x}$
and correct Fourier magnitude by
computing $\widehat{\y}(\omega)= m(\omega)\cdot\frac{\widehat{\x}(\omega)}{|\widehat{\x}(\omega)|}$.
\STEP{Adjust pattern} Compute inverse Fourier transform $\y$ of $\widehat{\y}$  and
obtain new iterate $\x^+$ as orthogonal projection of
$\y$ on prior information set $\A$, i.e.,  $\x^+\in P_\A({\y})$.
\end{algorithmic}
\end{algorithm}

As is well-known, the magnitude correction step
can be interpreted as orthogonal projection
of the current prior $\x\in \A$ on the magnitude 
set 
\begin{equation}
    \label{fat_B}
\B=\{\y\in \mathbb C^N: |\widehat{\y}(\omega)|=m(\omega),
\omega=0,\dots,N-1\},
\end{equation}
so that Gerchberg-Saxton error reduction
is the special case $\x^+\in P_\A\left( P_\B\left(\x\right)\right)$, $\y^+\in P_\B(P_\A(\y))$
of alternating projections. If we call $\x\in \A$ priors or pattern,
and $\y \in \B$  phase retrievals,
then
a generalized solution of the phase retrieval problem
is a pair $(\x^*,\y^*)$, where $\y^*\in \B$ is a phase retrieval closest to the prior
$\x^*\in \A$, and $\x^*$ is closest to $\y^*$ among the priors.
Since $\B$ is bounded,
the algorithm will by default give a gap
$(\A^*,\B^*,r^*)$, consisting of generalized solutions $(\x^*,\y^*)$. Primarily we hope
$\B^*$  to be singleton, as this means a unique phase retrieval $\y^*$ for all priors $\x^*\in \A^*$.  
Secondarily,  we would also not be averse to
$\A^*$ being singleton, as this would indicate that prior information $\A$ was successful 
in orienting us toward a unique prior $\x^*\in \A$
with that phase retrieval $\y^*\in \B^*$. Notwithstanding, the ideal case is convergence to $\x^*=\y^*\in \A \cap \B$,
in which case we find a prior which is also a phase retrieval.

One may argue that the least useful prior information is
$\A = \{{\bf 0}\}$, as this gives no orientation whatsoever on how to select
a phase retrieval $\y^*\in \B$ among the candidates $\y\in \B$. Any guess $\x\not=0$ would seem better.
We conclude  that meaningful  prior information $\A$ should allow a guess $\x\in \A$
better than just $\x={\bf 0}$. Since dist$({\bf 0},\B) = \|m\|:= \big(\sum_{\omega=0}^{N-1} m(\omega)^2\big)^{1/2}$, we shall 
say that $\A$ {\em allows a prior guess better than} ${\bf 0}$ if there exists
$\x\in \A$ with dist$(\x,\B) < \|m\|$.

\begin{theorem}
\label{GS}
Let prior information $\x\in \A$ be represented
by a closed subanalytic set $\A$ allowing a guess better than ${\bf 0}$.
Suppose Gerchberg-Saxton error reduction is started from that guess and generates sequences
$\x_n \in \A$, $\y_n\in \B$.
Then $\y_n$ converges to  a phase retrieval $\y^*\in \B$ with speed $\|\y_n-\y^*\| = O(n^{-\rho})$ for some $\rho > 0$. Every accumulation point
$\x^*\in \A^*$ of the sequence of priors $\x_n$
has phase retrieval $\y^*=P_\B(\x^*)$, and every prior $\x^*\in \A^*$ is best for $\y^*$, i.e.,  $\A^*\subset P_\A(\y^*)$.
\end{theorem}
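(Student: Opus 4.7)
My plan is to identify Gerchberg-Saxton error reduction with alternating projections between $\A$ and $\B$ and then apply Corollary \ref{subanalytic}. Algorithm 1 is precisely $\y_n\in P_\B(\x_n)$ followed by $\x_{n+1}\in P_\A(\y_n)$, as already noted in the paper. The first step is to check that $\B$ in \eqref{fat_B} fits the framework: the defining equations $|\widehat{\y}(\omega)|^2=m(\omega)^2$ are polynomial in the real and imaginary coordinates of $\y$, so $\B$ is semi-algebraic and hence subanalytic; Parseval gives $\|\y\|=\|m\|$ for every $\y\in\B$, so $\B$ is bounded. In the Fourier domain $\B$ is a product of circles of radii $m(\omega)$ (with trivial factors $\{0\}$ whenever $m(\omega)=0$), a smooth compact manifold, and is therefore prox-regular at every one of its points.

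The second step is to exploit the starting-point hypothesis. Because distances are nonincreasing along an alternating sequence, $r^* = \lim_n\|\x_n-\y_n\| \leq \|\x_0-\y_0\| = \mathrm{dist}(\x_0,\B) < \|m\| = \mathrm{dist}(\mathbf 0,\B)$, so no accumulation point of $\x_n$ can equal $\mathbf 0$. A direct computation in Fourier coordinates then expresses the reach $R(\y^*,d)$ at any accumulation pair $(\x^*,\y^*)$ in the approach direction $d=(\x^*-\y^*)/\|\x^*-\y^*\|$ in terms of the moduli $|\widehat{\x}^*(\omega)|$; the strict gain $r^*<\|m\|$ over the origin is then used to keep this reach strictly above $r^*$.

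With these ingredients Corollary \ref{subanalytic} yields convergence $\y_n\to \y^*\in \B$ at rate $O(n^{-\rho})$. The structural assertions follow by limiting arguments: along any subsequence $\x_{n_k}\to \x^*\in \A^*$, upper semicontinuity of $P_\B$ gives $\y^*\in P_\B(\x^*)$, and since $\widehat{\x}^*$ does not vanish on $\mathrm{supp}\,m$ (by the reach analysis above) the projection is single-valued, so $\y^*=P_\B(\x^*)$. Taking limits in $\x_{n_k+1}\in P_\A(\y_{n_k})$ produces $\x^*\in P_\A(\y^*)$, which is $\A^*\subset P_\A(\y^*)$.

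The main obstacle is the reach step: converting the global bound $r^*<\|m\|$ into the pointwise inequality $r^*<R(\y^*,d)$ needed to feed Corollary \ref{subanalytic}. This requires a careful Fourier-coordinate analysis showing that the iteration cannot simultaneously drive some $|\widehat{\x}_n(\omega)|$ to $0$ on $\mathrm{supp}\,m$ while keeping $\|\x_n-\y_n\|<\|m\|$, since such degenerate accumulation would put the limiting $\y^*$ on the medial axis of $\B$ and collapse the reach to $r^*$. This is where the specific geometry of the magnitude set $\B$, whose ambiguities are confined to the coordinate hyperplanes, must enter decisively.
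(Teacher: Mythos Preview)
Your outline is exactly the paper's approach: recognise Gerchberg--Saxton as alternating projections, check that $\B$ is subanalytic and prox-regular, use the better-than-$\mathbf 0$ guess to get $r^*<\|m\|$, and feed this into the subanalytic convergence machinery via H\"older regularity coming from the reach of $\B$.

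Where you hesitate, the paper simply asserts a stronger fact and moves on. Its resolution of your ``main obstacle'' is a direct reach computation in Fourier coordinates: writing a unit proximal normal to $\B$ at $\y$ as $d_\omega=\pm\widehat{\y}(\omega)/\|m\|$ and tracing the ray $\y+Rd$, the paper finds $P_\B(\y+Rd)=\y$ exactly when $R<\|m\|$ (or for all $R$ if every sign is outward). It therefore declares $R(\y,d)\in\{\|m\|,\infty\}$ for \emph{every} unit proximal normal, so that $r^*<\|m\|$ places the iterates within reach of $\B$ and Corollary~\ref{non_shrinking} applies immediately---no analysis of $|\widehat{\x}_n(\omega)|$ is attempted.

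Your caution is in fact sharper than the paper here. A general unit proximal normal to the product of circles has the form $d_\omega=c_\omega\,\widehat{\y}(\omega)/m(\omega)$ with $\sum_\omega c_\omega^2=1$, and the paper's formula covers only the special choice $c_\omega=\pm m(\omega)/\|m\|$ (the ray through the origin). For the direction $d=(\x^+-\y^+)/\|\x^+-\y^+\|$ that actually occurs in a building block one gets
\[
R(\y^+,d)=\|\x^+-\y^+\|\cdot\min_{\omega:\,|\widehat{\x}^+(\omega)|<m(\omega)}\frac{m(\omega)}{m(\omega)-|\widehat{\x}^+(\omega)|},
\]
which always satisfies $R(\y^+,d)\ge\|\x^+-\y^+\|>r^*$ but collapses to $r^*$ precisely when some $\widehat{\x}^+(\omega)\to 0$ on $\mathrm{supp}\,m$. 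So the scenario you worry about is genuine and is not excluded by the paper's computation either; the paper's claim that the reach is $\|m\|$ in all proximal normal directions is too strong as written. Your instinct to isolate this degeneracy is well placed.
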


\begin{proof}
Since the method is an instance of alternating projections,
the result will
follow from the main theorem. Note that
the sequences $\x_n,\y_n$ generate a gap $(\A^*,\B^*,r^*)$, where
$r^* < \|m\|$, because by assumption the initial guess satisfies already
dist$(\x_0,\B) < \|m\|$.

We check the hypotheses of the main theorem.
The fact that $\B$ is subanalytic was shown in \cite{noll},
and since $\A$ is subanalytic by hypothesis,
the first part of the requirements in the main theorem
is met.

For the following we identify $\mathbb C^N$ with $\mathbb R^{2N}$
in the natural way. Then up to Fourier transforms $P_\B$ is the mapping
\begin{equation}
    \label{project}
(\widehat{\x}_1(\omega),\widehat{\x}_2(\omega))\to m(\omega)
\left( 
\frac{\widehat{\x}_1(\omega)}
     {\sqrt{\widehat{\x}_1(\omega)^2+\widehat{\x}_2(\omega)^2}} ,
     \frac{\widehat{\x}_2(\omega)}
     {\sqrt{\widehat{\x}_1(\omega)^2+\widehat{\x}_2(\omega)^2}} 
\right),
\end{equation}
which can be understood as the cartesian product of
$N$ projections on circles with radii $m(\omega)$
in $\mathbb R^2$.

Working for simplicity in the frequency domain,
let $\y\in \B$ and ${\bf d}$ a unit proximal normal vector to
$\B$ at $\y$. Then ${\bf d}=(d_\omega)$ where for every
$\omega=0,\dots,N-1$ $d_\omega$ is a normal to
the sphere $y_1(\omega)^2+y_2(\omega)^2 = m(\omega)^2$ at $(y_1(\omega),y_2(\omega))
\in\mathbb R^2$. That means $d_\omega=\pm(y_1(\omega),y_2(\omega))/\|m\|$. 
This gives us now the reach of $\B$ at $\y$
with respect to ${\bf d}$. We have
$R(\y,{\bf d})=\|m\|$ if there exists at least one coordinate $\omega$ with $d_\omega=-(y_1(\omega),y_2(\omega))/\|m\|$,
while $R(\y,{\bf d})=\infty$ if all signs are positive. Indeed, 
we have to determine the largest $R\geq 0$ such that 
the projection $P_{\bf B}(\y+R{\bf d})=\y$. We 
may without loss assume that $\y =(0,m(\omega))$, then
$\y+R{\bf d}= (0,m(\omega)) \pm \frac{R}{\|m\|} (0,m(\omega))= (0,(1\pm \frac{R}{\|m\|})m(\omega))$. 
Now it follows that $P_{\bf B}(\y+R{\bf d}) = (0,{\rm sign}(1\pm \frac{R}{\|m\|})m(\omega))$,
and for this to equal $\y = (0,m(\omega))$, we need $1-\frac{R}{\|m\|} > 0$ if there is at least one negative sign,
while this is always true when all signs are positive. This means $R < \|m\|$ if there is one negative sign, so the limiting case
gives the reach $R=\|m\|$.

In consequence, as the sequence $\x_n,\y_n$ has gap $r^* < \|m\|$, the $\x_n\in \A$ are within reach of
$\B$, so H\"older regularity of the gap $(\A^*,\B^*,r^*)$ associated with the alternating sequence
follows from Corollary \ref{non_shrinking}.
Convergence $\y_n\to \y^*\in \B$ with $\B^*=\{\y^*\}$ now follows from the main theorem, the rate being provided by Theorem \ref{rate}.
\hfill $\square$
\end{proof}

\begin{remark}
As in the case of the main theorem no information on convergence of the
sequence $\x_n\in \A$ is available in the infeasible case $r^* > 0$, while convergence of the $\x_n$ is assured \cite{noll} when
$r^*=0$. In the feasible case starting from a guess better than {\bf 0} is not required to get convergence, see \cite{noll}.
Moreover, the projection on $\A$ may be performed locally, which gives additional flexibility.
\end{remark}

When $r^* >0$
additional properties of the prior set $\A$ are needed to
assure that $\A^*$ is also singleton. During the following we discuss a number of prominent examples.
Historically the first instance of Gerchberg-Saxton
error reduction along with (\ref{B}) had measurements of the signal magnitude in a second Fourier plane.
This can be modeled by taking the prior set
\begin{equation}
    \label{second_plane}
\A = \{\x\in \mathbb C^N: |\x(t)| = \widetilde{m}(t), t=0,\dots,N-1\}
\end{equation}
where $\|\widetilde{m}\| = \|m\|$. Here $\B$ and $\A$ have the same reach, and consequently we have the following

\begin{corollary}
\label{original}
The historically first instance of Gerchberg-Saxton error reduction {\rm (\ref{fat_B})}, {\rm (\ref{second_plane})},
if started
from an initial guess $\x_0$ better than ${\bf 0}$, 
converges with speed
$\|\x_k - \x^*\|=O(k^{-\rho})$, $\|\y_k - \y^*\|= O(k^{-\rho})$ for some $\rho > 0$.
The limit pair
$\x^*,\y^*$ has the following properties: $|\x^*|=\widetilde{m}$, $|\widehat{\y}^*|=m$, $\widehat{\y}^* = m\cdot \widehat{\x}^*/|\widehat{\x}^*|$,
$\x^* = \widetilde{m}\cdot {\y}^*/|{\y}^*|$.
\end{corollary}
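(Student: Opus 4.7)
The plan is to obtain convergence of $\y_n$ directly from Theorem \ref{GS}, and then to obtain convergence of $\x_n$ by the symmetric application of the same result, exploiting the fact that $\A$ and $\B$ in this historical setup have exactly parallel geometric structure.

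First I would invoke Theorem \ref{GS} as it stands. The prior set $\A$ is subanalytic (a product of circles in $\mathbb{R}^{2}$ identified as $\mathbb{C}$), and the initial guess $\x_{0}$ being better than $\mathbf{0}$ yields gap value $r^{*} \le \mathrm{dist}(\x_{0},\B) < \|m\|$. Theorem \ref{GS} therefore delivers $\y_{n} \to \y^{*}\in \B$ with rate $O(n^{-\rho})$, the accumulation points $\x^{*}\in \A^{*}$ satisfy $\y^{*} = P_{\B}(\x^{*})$, and $\A^{*}\subset P_{\A}(\y^{*})$.

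Next I would argue symmetrically to get convergence of $\x_{n}$. The point is that the derivation in the proof of Theorem \ref{GS}---subanalyticity, the computation of the reach via the product-of-circles description, and the invocation of Corollary \ref{non_shrinking}---uses only that the set in question is a Cartesian product (possibly after a global unitary change of variables, namely the Fourier transform) of circles of radii $\widetilde{m}(t)$, and that the other set is at gap $< \|\widetilde{m}\|$. Since $\|\widetilde{m}\| = \|m\|$ and the gap $r^{*}$ of the sequence is the same regardless of which side we analyse, the hypothesis needed to control $\x_{n}$ via the roles swapped is automatic: $\A$ has reach exactly $\|\widetilde{m}\| > r^{*}$ at every point, so by Corollary \ref{non_shrinking} the gap $(\B^{*},\A^{*},r^{*})$ is H\"older regular, the angle condition is inherited from subanalyticity via Lemma \ref{second}, and the main convergence theorem applied to the prox-building blocks $\y_{k-1}\to \x_{k}\to \y_{k}$ yields $\x_{n}\to \x^{*}\in \A$ at rate $O(n^{-\rho})$, with corresponding rate from Theorem \ref{rate}.

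Finally I would read off the structural identities. From $\x^{*}\in \A$ and $\y^{*}\in \B$ we immediately obtain $|\x^{*}| = \widetilde{m}$ and $|\widehat{\y}^{*}| = m$. From $\y^{*}=P_{\B}(\x^{*})$ together with the closed form (\ref{project}) for projection onto $\B$ we obtain $\widehat{\y}^{*} = m\cdot \widehat{\x}^{*}/|\widehat{\x}^{*}|$; applying the same closed form in the physical domain to $\x^{*}\in P_{\A}(\y^{*})$ gives $\x^{*} = \widetilde{m}\cdot \y^{*}/|\y^{*}|$. The main obstacle is the second step: one has to be sure that the swap of roles really is admissible, i.e., that the symmetric hypotheses (in particular subanalyticity, prox-regularity at the limit set with reach strictly greater than $r^{*}$, and a starting guess whose gap value is strictly smaller than the reach of the opposite set) all hold simultaneously. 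This is exactly where the equality $\|\widetilde{m}\| = \|m\|$ built into the classical two-plane setup is essential.
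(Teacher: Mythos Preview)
Your proposal is correct and follows essentially the same approach as the paper: apply Theorem \ref{GS} to the gap $(\A^*,\B^*,r^*)$ to get convergence of the $\y_n$, and then invoke the same result with the roles of $\A$ and $\B$ exchanged, using $r^* < \|m\| = \|\widetilde{m}\|$, to obtain convergence of the $\x_n$. The paper's proof is terser (it simply says ``apply Theorem \ref{GS} to both gaps''), but your more explicit verification of the symmetric hypotheses---in particular the reach computation for $\A$ and the role of $\|\widetilde{m}\|=\|m\|$---is exactly what underlies that sentence.
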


\begin{proof}
This follows by applying Theorem \ref{GS} to both gaps $(\A^*,\B^*,r^*)$ and $(\B^*,\A^*,r^*)$ and using $r^* < \|m\|=\|\widetilde{m}\|$. 
Note that $\A,\B$ are
both subanalytic, so the hypotheses of the theorem are met.
\hfill $\square$
\end{proof}

\begin{remark}
The case $r^*=0$ is allowed and gives $\x^*=\y^*$. As was shown in \cite{noll}, if $\A \cap \B \not=\emptyset$,
then there exists a neighborhood $V$ of $\A \cap \B$ such that whenever a Gerchberg-Saxton sequence enters $V$,
 it will converge toward a phase retrieval $\x^*=\y^*\in \A \cap \B$.  We mention that the case of two Fourier planes arises for instance in electron microscopy and
 in wave front sensing \cite{fienup,fienup2}.
\end{remark}

Another typical case arising in a variety of applications in crystallography (see \cite{fienup2})  is when the unknown signal $\x$
has support in a known subset $S$ of the physical domain $\{0,\dots,N-1\}$, i.e., supp$(\x) \subset S$.

\begin{corollary}
Consider a support prior
$\A = \{\x\in \mathbb C^N: \x(t) = 0 \mbox{ for } t\not\in S\}$ in the physical domain, where Gerchberg-Saxton error reduction has
the compact form $\x^+ = {\bf 1}_S\cdot (m\widehat{\x}/|\widehat{\x}|)^\sim$,
${\y}^+=\big(m \,\widehat{{\bf 1}_S\cdot \y}/|\widehat{{\bf 1}_S\cdot \y}|\big)^\sim$.
Here the $\x_k\in \A$ converge with speed $O(k^{-\rho})$ to a unique $\x^*$
with its support in $S$, while every accumulation point
$\y^*$
of the $\y_k$ is a possible phase retrieval of $\x^*$.
If, in addition, the prior allows a guess better than ${\bf 0}$ from which the iterates are started, 
then both sequences converge with that speed
to a pair $(\x^*,\y^*)$, where ${\rm supp}(\x^*)\subset S$, $|\widehat{\y}^*|=m$, $\x^*={\bf 1}_S\cdot \y^*$,
$m\widehat{\x}^*/|\widehat{\x}^*|=\widehat{\y}^*$.
\end{corollary}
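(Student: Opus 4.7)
My plan is to exploit that the support prior $\A$ is a linear (coordinate) subspace of $\mathbb C^N \cong \mathbb R^{2N}$, hence convex, subanalytic, and prox-regular with infinite reach throughout. This immediately suggests invoking our convergence machinery with the roles of $A,B$ swapped relative to Theorem \ref{GS}, so that the convex side $\A$ is the set on which the iterates are forced to converge, while the magnitude set $\B$ plays the passive role.

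I would first apply Corollary \ref{subanalytic} with the assignment $A:=\B$, $B:=\A$. The required hypotheses are met: $\B$ is subanalytic (as recorded in Theorem \ref{GS}), $\A$ is subanalytic since it is a coordinate subspace, and $\A$ is prox-regular with reach $R=+\infty$. Boundedness of $\B$ forces the whole alternating sequence to be bounded, and the gap value $r^*$ is automatically strictly less than $R$; in particular the ``better than $\mathbf 0$'' assumption of Theorem \ref{GS} is \emph{not} required here, precisely because the reach on the $\A$-side is infinite. Corollary \ref{subanalytic} then delivers a unique $\x^*\in\A$ with $\x_k\to\x^*$ at rate $O(k^{-\rho})$, and by construction ${\rm supp}(\x^*)\subset S$. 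For any accumulation point $\y^*$ of the $\y_k$, closedness of $\B$ and the outer semicontinuity of $P_\B$ on bounded sets combined with $\y_k\in P_\B(\x_k)$ and $\x_k\to\x^*$ give $\y^*\in P_\B(\x^*)$; that is, $\y^*$ is a possible phase retrieval of $\x^*$.

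For the second part, the extra assumption of a starting guess better than $\mathbf 0$ provides $r^*<\|m\|$, which is exactly what Theorem \ref{GS} needs in order to produce convergence $\y_n\to\y^*$ at rate $O(n^{-\rho})$. The displayed identities for the limit pair are then read off directly from the membership and projection relations: $\x^*\in\A$ gives ${\rm supp}(\x^*)\subset S$; $\y^*\in\B$ gives $|\widehat{\y}^*|=m$; $\x^*=P_\A(\y^*)$ is the linear projection onto the coordinate subspace, namely $\mathbf 1_S\cdot\y^*$; and $\y^*\in P_\B(\x^*)$ combined with the explicit cartesian-product formula (\ref{project}) yields $\widehat{\y}^*=m\,\widehat{\x}^*/|\widehat{\x}^*|$.

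The main conceptual point, rather than a technical obstacle, is recognising that the convex (subspace) structure of the prior lets one run the convergence argument on the $\A$-side with \emph{no restriction} on the initial data, in sharp contrast to Corollary \ref{original}, where both $\A$ and $\B$ had finite reach $\|m\|$ and a guess better than $\mathbf 0$ was needed to keep the sequence strictly inside reach of both sets. Saturation of the gap with respect to the accumulation sets $A^s,B^s$ and the subanalytic angle condition, which would otherwise be the delicate ingredients, are packaged inside Corollary \ref{subanalytic} once the prox-regularity of $\A$ is in place.
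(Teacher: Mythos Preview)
Your proposal is correct and follows essentially the same route as the paper: exploit that the support prior $\A$ is a linear subspace, hence convex with infinite reach, and apply the main convergence machinery with the roles of $\A$ and $\B$ swapped to force convergence of the $\x_k$ unconditionally; then invoke Theorem~\ref{GS} under the ``better than $\mathbf 0$'' hypothesis to obtain convergence of the $\y_k$ as well. The paper phrases the first step as ``Theorem~\ref{GS} applied to the gap $(\B^*,\A^*,r^*)$'', whereas you cite Corollary~\ref{subanalytic} with $A:=\B$, $B:=\A$ directly; the latter is arguably cleaner, but the underlying argument is identical.
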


\begin{proof}
The constraint set $\A$ is convex and algebraic, hence convergence $\x_k\to \x^*\in \A$ follows from
Theorem \ref{GS}, applied to the gap
$(\B^*,\A^*,r^*)$, using that $\A$ has infinite reach. On the other hand, when $r^*< \|m\|$, we can
use the previous result and obtain convergence $\y_k\to \y^*$, so that both sequences converge.
\hfill $\square$
\end{proof}


\begin{remark}
Another case in the rubrique of convex priors is
$\A=\{\x\in \mathbb C^N: {\rm Im}(\x)=0, {\rm Re}(\x)\geq 0\}$, which occurs for instance
if $\x$ is an unknown image, known to have real non-negative gray values. This arises for instance in astronomic speckle interferometry,
cf. \cite{fienup2}. 
\end{remark}

An interesting case often discussed in the 
literature is a sparsity prior. Let $k\ll N$
and define
\begin{equation}
    \label{sparse}
\A = \{\x\in \mathbb C^N: \mbox{at most $k$ of the $\x(t)$ are non-zero}\}.
\end{equation}
The projection $P_{\A}$ on $\A$ is easily identified:
If $|\y(t_0)|\leq |\y(t_1)|\leq \dots \leq |\y(t_{N-1})|$ for a permutation $t_0,\dots,t_{N-1}$ of
$0,\dots,N-1$, then $\x$ with $\x(t_0)=0,\dots,\x(t_{k-1})=0$, $\x(t_k)=\y(t_k),\dots,\x(t_{N-1})=\y(t_{N-1})$
belongs to $P_\A(\y)$, and every element of $P_\A(\y)$ is of this type. Let $S \subset \{0,\dots,N-1\}$ denote subsets of
cardinal $N-k$, and let $P_S$ be the projection on
the linear subspace $\{\x: \x(t) = 0 \mbox{ for }t\not\in S\}$, that is
$P_S(\y) = {\bf 1}_S\cdot \y$.
Then $P_\A(y)\subset \bigcup\{P_S(\y): |S|=N-k\}$. Moreover,
there exists a subset $\frak{S}$ of $\mathcal P(\{0,\dots,N-1\})$, depending on $\y$,
such that $P_\A(\y)=\bigcup \{P_S(\y): S \in \frak{S}\}$.

Now suppose $r^* < \|m\|$, so that the sequence $\y_n$ in the Gerchberg-Saxton algorithm converges to
a unique phase retrieval $\y^*$. Let $\A^*$ be the set of accumulation points
of the sequence $\x_n\in \A$. Then $\A^* = \bigcup\{P_S(\y^*): S \in \frak{S}^*\}$ for the
$\frak{S}^*$ associated with $\y^*$. That means, $\A^*$
is a finite set of cardinal $|\A^*|\leq |\frak{S}^*|\leq {N\choose N-k}$.
$|\A^*|$ can be computed accurately.
Let $|\y^*(t_0)|\leq \dots \leq |\y^*(t_{k-1})| \leq \dots \leq |\y^*(t_{N-1})|$, and suppose
there is ambiguity around position $k-1$ in the sense that
$|\y^*(t_{k-1-r})|=\dots =|\y^*(t_{k-1})|=\dots=|\y^*(t_{k-1+s})|$
for $s>0$. Then we have ${r+s+1 \choose r+1}$ possibilities to arrange $|\y|$
in increasing order and truncate at $k-1$, so this is the cardinal of $\A^*$. Now
choose $\epsilon >0$ such that the balls $\mathcal B(\x^*,\epsilon)$, $\x^*\in \A^*$ are mutually disjoint.
Note that $\x_n\in \bigcup\{ \mathcal B(\x^*,\epsilon): \x^*\in \A^*\}$ from some counter
$n(\epsilon)$ onward. That means we get a finite partition
$\mathbb N= N_1 \cup \dots \cup N_{|\A^*|}$ into infinite sets $N_i$ such that
the subsequence $\x_n$, $n\in N_i$, converges to the $i$th element of $\A^*$.
If $|\y^*(t_{k-1})| < |\y^*(t_k)|$, which corresponds to the case $s=0$, then the projection is unique,
and the entire sequence $\x_n$ converges. 

\begin{corollary}
Suppose $\A$ is the sparsity prior {\rm (\ref{sparse})} and allows a guess
better than ${\bf 0}$, at which Gerchberg-Saxton error reduction is started. Then $\|\y_n - \y^*\|=O(n^{-\rho})$ for a unique phase retrieval $\y^*$, 
while the
$\x_n$ have finitely many sparse accumulation points $\x^*$, each
admitting $\y^*$ as its phase retrieval. If choosing the $k$ smallest $|\y^*(t)|$ is unambiguous,
then the entire sequence $\x_n$ converges to a unique sparse $\x^*$, whose phase retrieval is $\y^*$.
\hfill $\square$
\end{corollary}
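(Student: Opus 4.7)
The plan is to feed the sparsity prior $\A$ into Theorem \ref{GS} and then to read off the structure of the accumulation set $\A^*$ from the explicit formula for $P_\A$ recalled in the discussion preceding the statement.

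First I would verify the hypotheses of Theorem \ref{GS}. The set $\A$ is a finite union of $k$-dimensional complex coordinate subspaces of $\mathbb C^N$, hence closed and semi-algebraic, a fortiori subanalytic; $\B$ is subanalytic by the argument recalled inside the proof of Theorem \ref{GS}. The hypothesis that the starting guess is better than ${\bf 0}$ gives $\|\x_0-\y_0\|=d_\B(\x_0)<\|m\|$, and since the distances $\|\x_n-\y_n\|$ along a Gerchberg--Saxton sequence are non-increasing, $r^*\le\|\x_0-\y_0\|<\|m\|$. Theorem \ref{GS} therefore produces the convergence $\y_n\to\y^*$ at rate $O(n^{-\rho})$ together with $\y^*=P_\B(\x^*)$ and $\A^*\subset P_\A(\y^*)$ for every $\x^*\in\A^*$, which already settles the phase-retrieval assertion.

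Next I would identify $\A^*$ as a finite set. From the description of $P_\A$ given just before the statement, $P_\A(\y^*)=\bigcup\{P_S(\y^*):S\in\mathfrak{S}^*\}$, where $\mathfrak{S}^*$ is the family of subsets $S\subset\{0,\ldots,N-1\}$ of cardinality $N-k$ whose complements mark the $k$ smallest values among $|\y^*(0)|,\ldots,|\y^*(N-1)|$. In the notation of the preceding discussion, $|\mathfrak{S}^*|=\binom{r+s+1}{r+1}\le\binom{N}{k}$, so $P_\A(\y^*)$, and hence $\A^*$, is a finite set of sparse vectors.

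For the partition and the uniqueness case I would fix $\epsilon>0$ so small that the balls $\mathcal B(\x^*,\epsilon)$, $\x^*\in\A^*$, are pairwise disjoint. Since $\A^*$ is precisely the accumulation set of the bounded sequence $\x_n$, there exists $n(\epsilon)$ such that each $\x_n$ with $n\ge n(\epsilon)$ lies in one of these balls, and grouping the indices by the ball containing $\x_n$ partitions the tail of $\mathbb N$ into finitely many infinite subsequences, each converging to the corresponding element of $\A^*$. When $|\y^*(t_{k-1})|<|\y^*(t_k)|$ one has $|\mathfrak{S}^*|=1$, so $\A^*$ is a singleton and the bounded sequence $\x_n$ converges to that unique sparse $\x^*$, whose phase retrieval is $\y^*$ by the first step. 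The one subtle point is the inclusion $\A^*\subset P_\A(\y^*)$ used above: it rests on outer semicontinuity of the set-valued map $P_\A$ at $\y^*$, which in this combinatorial setting reduces to the observation that $\mathfrak{S}(\y_n)\subset\mathfrak{S}^*$ eventually, so that any limit of selectors $S_n\in\mathfrak{S}(\y_n)$ lies in $\mathfrak{S}^*$. Once this is secured the remainder is a direct combinatorial reading of the explicit formula for $P_\A$.
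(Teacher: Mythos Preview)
Your proposal is correct and follows essentially the same route as the paper: the corollary carries no separate proof (it is marked with $\square$), the argument being the discussion immediately preceding it, and you reproduce that discussion faithfully---invoking Theorem~\ref{GS} for the convergence of $\y_n$ and the inclusion $\A^*\subset P_\A(\y^*)$, then reading off finiteness of $\A^*$ from the combinatorial description of $P_\A$. Your added remark that $\A$ is semi-algebraic as a finite union of coordinate subspaces, and your closing observation on outer semicontinuity of $P_\A$, are welcome bits of explicitness; note however that the inclusion $\A^*\subset P_\A(\y^*)$ is already part of the conclusion of Theorem~\ref{GS}, so the outer semicontinuity digression, while correct, is not strictly needed here.
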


\begin{remark}
Due to the special discrete structure of $\A^*$, if it is known that
$\x_n-\x_{n-1} \to 0$, then the sequence $\x_n$ converges as well.
\end{remark}

In \cite{thao} sparsity of the phase in the frequency
domain is considered with the prior
\begin{equation}
\label{sparse_phase}
\A=\{\x\in \mathbb C^N: \arg(\widehat{\x}(\omega)) \not= 0
\mbox{ for at most $k$ frequencies $\omega$}\}.
\end{equation}
We have to find the projection on $\A$. 
Given $\y$, we arrange $|{\rm Im}(\widehat{\y}(\omega_0))| \leq |{\rm Im}(\widehat{\y}(\omega_1))|\leq \dots \leq |{\rm Im}(\widehat{\y}(\omega_{N-1}))|$
for a permutation $\omega_0,\dots,\omega_{N-1}$ of $0,\dots,N-1$. Then $\x$ defined by
$\widehat{\x}(\omega_0)={\rm Re}(\widehat{\y}(\omega_0)), \dots, \widehat{\x}(\omega_{k-1})={\rm Re}(\widehat{\y}(\omega_{k-1}))$,  $\widehat{\x}(\omega_k)=\widehat{\y}(\omega_k),\dots,\widehat{\x}(\omega_{N-1})=\widehat{\y}(\omega_{N-1})$,
satisfies $\x \in P_\A(\y)$. This situation is now similar to sparsity in the physical domain.
Let $\widehat{S}\subset \{0,\dots,N-1\}$ denote subsets of cardinal $|\widehat{S}|=N-k$, and
let $P_{\widehat{S}}$ be the projection on the 
linear subspace $\{\y\in \mathbb C^N: \widehat{\y}(\omega) \in \mathbb R \mbox{ for all $\omega\in \widehat{S}$}\}$.
That is $P_{\widehat{S}}(\y) = ({\bf 1}-{\bf 1}_{\widehat{S}})\cdot \y + {\bf 1}_{\widehat{S}}\cdot {\rm Re}(\y)$.
Then $P_\A(\y) \subset \bigcup \{P_{\widehat{S}}(\y): |\widehat{S}|=N-k\}$, and for every $\y$ there exists a
set $\widehat{\frak{S}}$ of such $\widehat{S}$, depending on $\y$,  such that
$P_\A(\y) = \bigcup \{P_{\widehat{S}}(\y): \widehat{S}\in \widehat{\frak{S}}\}$. 

\begin{corollary}
Let $\x_n,\y_n$ be the Gerchberg-Saxton sequence for the sparse phase prior {\rm (\ref{sparse_phase})}.
Suppose $\A$ allows a guess better than ${\bf 0}$, from which
error reduction is started. Then the $\y_n$ converge toward a unique phase retrieval $\y^*$ with speed $O(n^{-\rho})$ for some
$\rho > 0$.
The $\x_n$ admit a finite set of accumulation points, each with sparse phase, and having $\y^*$
as their phase retrieval.
\hfill $\square$
\end{corollary}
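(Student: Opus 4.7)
The plan is to mimic the proof of the preceding corollary on sparsity, exploiting the fact that the sparse-phase constraint has an analogous piecewise-linear structure. First I would verify that the set $\A$ in (\ref{sparse_phase}) is subanalytic, so that Theorem \ref{GS} becomes applicable. To this end, identifying $\mathbb{C}^N$ with $\mathbb{R}^{2N}$, write
\[
\A = \bigcup_{|\widehat{S}|=N-k} L_{\widehat{S}}, \qquad L_{\widehat{S}} = \{\y\in \mathbb{C}^N : \mathrm{Im}(\widehat{\y}(\omega))=0 \text{ for all } \omega\in \widehat{S}\}.
\]
Each $L_{\widehat{S}}$ is a real-linear subspace, so $\A$ is a finite union of such subspaces, hence closed and semi-algebraic, in particular subanalytic.

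With subanalyticity in hand and the standing hypothesis that $\A$ admits a guess better than $\mathbf{0}$, Theorem \ref{GS} applies directly: the sequence $\y_n$ converges to a unique phase retrieval $\y^*\in \B$ with rate $\|\y_n-\y^*\|=O(n^{-\rho})$ for some $\rho>0$, and every accumulation point $\x^*$ of $\x_n$ belongs to $P_\A(\y^*)$ and has $\y^*$ as its phase retrieval.

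It remains to argue that the set $\A^*$ of accumulation points of $\x_n$ is finite and that each element has sparse phase. For this I invoke the explicit description of $P_\A$ given immediately before the corollary: there exists a family $\widehat{\frak{S}}^*$ of subsets of $\{0,\dots,N-1\}$ of cardinality $N-k$, depending on $\y^*$, such that
\[
P_\A(\y^*) = \bigcup\{P_{\widehat{S}}(\y^*) : \widehat{S}\in \widehat{\frak{S}}^*\}.
\]
Since $|\widehat{\frak{S}}^*|\leq \binom{N}{k}$, the set $P_\A(\y^*)$ is finite, hence so is $\A^*\subset P_\A(\y^*)$. Each element of $\A^*$ lies in $\A$ and therefore has at most $k$ frequencies with nonzero argument, i.e.\ sparse phase, and has $\y^*$ as its phase retrieval.

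The only real work is the reduction to Theorem \ref{GS}, which relies on the subanalyticity of $\A$; the rest follows by inspection of the projection formula. The single point requiring care — and the only potential obstacle — is the verification that the union-of-subspaces description really does exhibit $\A$ as a semi-algebraic set and that the characterization of $P_\A(\y^*)$ given in the paragraph preceding the corollary is correctly inherited in the limit, which is immediate because the combinatorial choice of $\widehat{S}$ depends only on $\y^*$ and not on the subsequence extracted.
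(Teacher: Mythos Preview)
Your proposal is correct and follows exactly the approach the paper intends: the corollary is stated with a bare $\square$ and no explicit proof, relying on the preceding paragraph's description of $P_\A$ together with Theorem~\ref{GS}, and your argument simply fills in those implicit steps. The only addition you make is the explicit verification that $\A$ is semi-algebraic as a finite union of real-linear subspaces, which is the one point the paper leaves entirely to the reader.
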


It is again clear that when $|{\rm Im}(\y^*(t_{k-1}))| < |{\rm Im}(\y^*(t_k))|$, then the entire sequence
$\x_n$ converges, and the same is  true when $\x_n-\x_{n-1} \to 0$.

\begin{remark}
For the feasible case $A\cap B\not=\emptyset$ it has often been argued in the literature, see e.g. the essai \cite{luke_nonsense}, that convergence 
of alternating projections and Gerchberg-Saxton error reduction should 
be linear as a rule.  Typical supporting arguments are as follows:  $A,B$ drawn randomly,
will almost always intersect transversally. Or in the same vein: Even when
$A,B$ happen to intersect tangentially (as opposed to transversally), the slightest perturbation of their mutual position
would countermand this and lead back to transversality. 
 Even if one agrees with this reasoning, one should be aware that this does by no means resolve the dilemma of the phase retrieval
 literature \cite{luke_nonsense}. Namely, transversality is not a useful convergence criterion, because it is
 impossible to check it in practical situations. (Readers may convince themselves of the validity of our argument by trying to
 prove transversality of $\A \cap \B\not=\emptyset$ in any of the practical situations of this section.)
 For the feasible case, the only practically useful criterion for convergence of Gerchberg-Saxton error reduction
ever published is \cite{noll}. Our present contribution completes this picture by providing the very first verifiable conditions in the 
general case $r^* \geq 0$.
\end{remark}

\section{Cylinder and spiral}
\label{spiral}


In this section we show that Gerchberg-Saxton error reduction, even though convergent in natural situations, 
may fail to converge
even in the feasible case when the constraint set $\A$ is sufficiently pathological. We use an example constructed in \cite{douglas}, which we briefly
recall. We consider the cylinder mantle
\begin{equation}
    \label{B}
B=
\{x \in \mathbb R^3: x_1^2 + x_2^2 = 1, 0 \leq x_3 \leq 1\}
\end{equation}
the circle 
\begin{equation}
\label{F}
F=\{(\cos t, \sin t,0): t \geq 0\},
\end{equation}
and the logarithmic spiral
\begin{equation}
    \label{A}
A = \{((1+e^{-t})\cos t,(1+e^{-t})\sin t, e^{-t/2}): t \geq 0 \} \cup F
\end{equation}
winding around the cylinder with $A \cap B = F$. 
Alternating projections between the sets $A, B$ have been analyzed in \cite{douglas}, where in addition
a picture is available.  The findings
can be summarized as follows:
\begin{lemma}
{\rm (See \cite[Cor. 2]{douglas})}.
Every alternating sequence $a_k,b_k$ between cylinder mantle $B$ and spiral $A$, started at $a_1\in A\setminus F$, winds infinitely  often
around the cylinder, satisfies $a_k-a_{k+1} \to 0$, $b_k-b_{k+1}\to 0$, $a_k-b_k\to 0$, but fails to converge and 
its set of accumulation points is $F$. 
\hfill $\square$
\end{lemma}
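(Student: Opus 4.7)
The plan is to exploit the explicit parameterization of $A \setminus F$ by $t \geq 0$ via $a(t) = ((1+e^{-t})\cos t, (1+e^{-t})\sin t, e^{-t/2})$ to reduce alternating projections to a one-dimensional recursion on the spiral parameter. Since the height $e^{-t/2}$ lies in $(0,1]$, the radial ray through $a(t)$ meets the cylinder mantle at $b(t) := P_B(a(t)) = (\cos t, \sin t, e^{-t/2})$, and $\|a(t) - b(t)\| = e^{-t}$.

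The central step is to identify $P_A(b(t))$. I would expand the squared distance
\[
D(s) = (1+e^{-s})^2 - 2(1+e^{-s})\cos(s-t) + 1 + (e^{-s/2} - e^{-t/2})^2,
\]
observe $D(t) = e^{-2t}$ and $D'(t) = -2 e^{-2t} < 0$, and Taylor expand at $s = t$: writing $u = s - t$, for $t$ large one gets $D(t+u) \approx e^{-2t}(1-u)^2 + u^2$, which is minimized at $u^* = e^{-2t}/(1+e^{-2t}) > 0$ with minimum value $e^{-2t}/(1+e^{-2t}) < e^{-2t}$. I would then rule out distant competing minima: each point of $F$ lies at distance $\geq e^{-t/2} \gg e^{-t}$ from $b(t)$, and spiral points $a(s)$ with $|s-t| \geq \pi$ carry either a persistent angular gap or a height offset of order $e^{-t/2}$. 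Hence $P_A(b(t)) = a(s^\ast)$ with $s^\ast - t =: \Delta(t) > 0$ and $\Delta(t) \to 0$ as $t \to \infty$.

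From this I would extract the one-dimensional recursion $t_{k+1} = t_k + \Delta(t_k)$ with $t_k$ strictly increasing. If $t_k$ were bounded with limit $t^\infty < \infty$, continuity of the projections would force $\Delta(t^\infty) = 0$, contradicting $\Delta > 0$; hence $t_k \to \infty$. This immediately yields $\|a_k - b_k\| = e^{-t_k} \to 0$, and Lipschitzness of $a(\cdot)$ and $b(\cdot)$ on compact parameter sets gives $\|a_{k+1} - a_k\| \to 0$ and $\|b_{k+1} - b_k\| \to 0$. Finally, since $t_k$ tends monotonically to $\infty$ with successive increments tending to zero, the residues $t_k \bmod 2\pi$ are dense in $[0, 2\pi]$; combined with the vanishing height $e^{-t_k/2} \to 0$ this forces the accumulation set of $\{a_k\}$ to equal $F$, and analogously for $\{b_k\}$, so neither sequence converges.

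The main obstacle is the uniqueness claim underlying the second step: one must rule out globally, not merely locally, that the minimizer of $D$ jumps to a distant arc of the spiral for intermediate $t$. This is handled by a uniform lower bound on $\|a(s) - b(t)\|$ over $\{s \geq 0 : |s - t| \geq \pi\}$ that stays strictly above $e^{-t}$; the detailed bookkeeping is carried out in \cite[Cor. 2]{douglas} and can be invoked directly.
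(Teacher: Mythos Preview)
The paper does not give its own proof of this lemma; the statement is followed immediately by $\square$ and the result is attributed to \cite[Cor.~2]{douglas}. Your proposal therefore goes further than the paper by sketching the actual argument, and the outline you give is essentially the one carried out in \cite{douglas}: reduce to a one-dimensional recursion $t_{k+1}=t_k+\Delta(t_k)$ on the spiral parameter, show $\Delta(t)>0$ via $D'(t)=-2e^{-2t}<0$, rule out competing minima on $F$ and on distant spiral arcs, deduce $t_k\nearrow\infty$ with $\Delta(t_k)\to 0$, and conclude density of $t_k\bmod 2\pi$. This matches the estimate $t<\tau(t)<t-2\ln(1-e^{-t/2})$ that the paper quotes from \cite{douglas} in the Remark following the lemma.

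Two minor points. First, the phrase ``Lipschitzness of $a(\cdot)$ and $b(\cdot)$ on compact parameter sets'' is not quite what you use, since $t_k\to\infty$; what you actually need and have is that $\|a'(t)\|$ and $\|b'(t)\|$ are uniformly bounded on $[0,\infty)$, which is immediate from the explicit formulas. Second, your local asymptotic $\Delta(t)\sim e^{-2t}/(1+e^{-2t})$ is sharper than the upper bound $\Delta(t)<-2\ln(1-e^{-t/2})\sim 2e^{-t/2}$ that the paper cites; either suffices for $\Delta(t)\to 0$.
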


\begin{remark}
The only hypothesis from Theorem \ref{theorem1} which fails  here is the angle condition, which is thereby shown to
be essential. 
Note that we may consider the sequence $a_k,b_k$ as alternating between the spiral and the solid cylinder co$(B)$, which is convex,
so the pathological behavior is caused by the spiral.
While $A$ is not prox-regular, we can see that the projector $P_A$ is single-valued and even Lipschitz at the points of $B$. 
This can be seen from an estimate obtained in \cite{douglas}. Suppose $P_A(b(t))=a(\tau)$,
where $b(t)=(\cos t,\sin t,e^{-t/2})\in B$ and $a(\tau)=((1+e^{-\tau})\cos \tau, (1+e^{-\tau})\sin \tau,e^{-\tau/2})\in A$
and $\tau(t) = {\rm argmin}_\tau \|b(t)- a(\tau)\|$, then
$t < \tau(t) < t-2\ln (1-e^{-t/2})$ from \cite{douglas}, which shows that $t \mapsto \tau(t)$ is Lipschitz. 
\end{remark}

\begin{remark}
The projector $P_A$ is certainly locally Lipschitz on a neighborhood of $B^s$ if $A$ is prox-regular and $B^s$
is within reach. The case of the spiral $A$, which is not prox-regular, shows that Lipschitz behavior of  $P_A|B^s$ is a
considerably weaker requirement, but sufficient to imply convergence of the $A$-sequence, provided the $B$-sequence converges. 
In particular, for the spiral $P_A$ is locally Lipschitz on $B^s$, but not on a neighborhood of $B^s$. 
This leads to the following open problem:
Find compact prox-regular sets $A,B$ with non-empty intersection and an alternating sequence $a_k,b_k$ with $a_k-b_k\to 0$,
$a_k-a_{k-1}\to 0$, 
which fails to converge. We know that at least one of the sets must fail to be subanalytic. 
\end{remark}

We use this example to construct an instance of  Gerchberg-Saxton error reduction, where convergence to a single limit fails.
Consider an unknown image $\x(t)$ with two pixels $t=0,1$, where amplitude
measurements of the discrete Fourier transform
\begin{equation}
    \label{fourier}
\widehat{\x}(\omega) = \frac{1}{\sqrt{2}} \sum_{t=0}^1 e^{i\pi t\omega} \x(t), \; \omega = 0,1
\end{equation}
are available under the form
\begin{equation}
\label{magnitude}
|\widehat{\x}(0)| = 1, \quad |\widehat{\x}(1)| = 1.
\end{equation}
This corresponds to the Fourier magnitude set
\begin{equation}
    \label{fat_B_new}
\B = \{\x \in \mathbb C^2: |\widehat{\x}(0)| = 1, |\widehat{\x}(1)| = 1\}.
\end{equation}
Since unique reconstruction of $\x(t)$ based on these measurements is not possible, the following prior information
is added. The unknown source is assumed to belong to
the prior set
\begin{align}
\label{fat_A}
\notag
\A = \big\{\x \in \mathbb C^2: |\widehat{\x}(0)| = 1 + &\left({\rm Re}\, \widehat{\x}(1) \right)^2,  |\widehat{\x}(1)| = 1,  0 \leq {\rm Re}\,\widehat{\x}(1)\leq 1,\\
&\left.{\rm Re}\, \widehat{\x}(0)=(1+{\rm Re}\, \widehat{\x}(1)^2)\cos \left( \ln {\rm Re}\, \widehat{\x}(1)\right)
\right\} \cup {\bf F},
\end{align}
where
\begin{equation}
    \label{fat_F1}
    {\bf F} = \{\x\in \mathbb C^2: |\widehat{\x}(0)|=1, {\rm Re}\,\widehat{\x}(1)=0, {\rm Im}\,\widehat{\x}(1)=1\}.
\end{equation}
Now any Gerchberg-Saxton sequence $\x_k\in \A,\y_k\in \B$ corresponds to
a unique alternating sequence $a_k\in A$, $b_k\in B$. Let $\mathscr F$
be the Fourier transform (\ref{fourier}), $\mathscr F'$ the inverse Fourier transform,
$\mathscr P$ the projector $\x\in \mathbb C^2\to ({\rm Re}\, x(0), {\rm Im}\, x(0),{\rm Re}\, x(1)) \in \mathbb R^3$, $\mathscr P'$ its adjoint
the inclusion
$x\in \mathbb R^3 \to (x_1+ix_2,x_3+i0)\in \mathbb C^2$. Then we have
\begin{equation}
    \label{link}
P_\A = \mathscr F \circ \mathscr P' \circ P_A \circ \mathscr P \circ \mathscr F', \quad
P_\B = \mathscr F \circ \mathscr P' \circ P_B \circ \mathscr P \circ \mathscr F'.
\end{equation}
All we have to see is that (\ref{fat_A}) is just a way of encoding the spiral (\ref{A}) in frequency coordinates,
and bearing in mind that the fourth coordinate is fixed throughout. 
In other words, $A = \mathscr P(\mathscr F'(\A))$, and $\A= \mathscr F ( \mathscr P'(A))$, and the same for $B,\B$. 
Our findings, based on \cite[Thm. 3]{douglas}, 
are now summarized by the following:

\begin{theorem}
Gerchberg-Saxton error reduction for the two pixel reconstruction problem {\rm (\ref{fourier}), (\ref{magnitude})} 
with prior information  {\rm (\ref{fat_A})}
fails to converge even though $\x_k-\x_{k+1}\to 0$, $\y_k-\y_{k+1}\to 0$, $\x_k-\y_k\to 0$. Every $\x^*\in {\bf F}$ 
is an accumulation point of the sequences $\x_k,\y_k$ and represents a possible exact solution of the phase retrieval problem.
\hfill $\square$
\end{theorem}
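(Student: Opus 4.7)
The strategy is to transport the Gerchberg-Saxton iteration into the cylinder-spiral example via the conjugation identity (\ref{link}) and then invoke the lemma based on \cite[Cor.~2]{douglas}. Because $\mathscr{F}$ is an isometry of $\mathbb{C}^2 \cong \mathbb{R}^4$, $\mathscr{P}'$ is an isometric linear embedding $\mathbb{R}^3 \hookrightarrow \mathbb{R}^4$, and $\mathscr{P}$ is the corresponding orthogonal projection, all asymptotic and convergence-related properties transfer freely between the two settings.

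First I would set $a_k := \mathscr{P}(\mathscr{F}'(\mathbf{x}_k))$ and $b_k := \mathscr{P}(\mathscr{F}'(\mathbf{y}_k))$. Using the explicit form (\ref{fat_A}) of $\mathbf{A}$ together with (\ref{fat_B_new}), one checks that the constraints on $|\widehat{\mathbf{x}}(0)|, |\widehat{\mathbf{x}}(1)|, \operatorname{Re}\widehat{\mathbf{x}}(1)$ and $\operatorname{Re}\widehat{\mathbf{x}}(0)$ precisely carve out the spiral $A$ of (\ref{A}) and the mantle $B$ of (\ref{B}) in the three coordinates kept by $\mathscr{P}$, while $\mathbf{F}$ of (\ref{fat_F1}) corresponds to the accumulation circle $F$ of (\ref{F}). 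Combining (\ref{link}) with the Gerchberg-Saxton recursion then yields $a_{k+1} \in P_A(b_k)$ and $b_k \in P_B(a_k)$, i.e. the $(a_k,b_k)$ form a bona fide alternating sequence between $A$ and $B$, starting at $a_1 \in A \setminus F$ because $\mathbf{x}_0$ is a guess better than $\mathbf{0}$.

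The lemma recalled from \cite[Cor.~2]{douglas} applies and asserts that $a_k,b_k$ wind infinitely around the cylinder, satisfy $a_k - a_{k+1} \to 0$, $b_k - b_{k+1} \to 0$, $a_k - b_k \to 0$, but do not converge, having $F$ as the complete set of accumulation points. Pushing this forward through $\mathscr{F}\circ\mathscr{P}'$, which is a linear isometric embedding, preserves all four of these statements, giving $\mathbf{x}_k - \mathbf{x}_{k+1} \to 0$, $\mathbf{y}_k - \mathbf{y}_{k+1} \to 0$, $\mathbf{x}_k - \mathbf{y}_k \to 0$, and accumulation set $\mathbf{F}$ for both sequences. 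Finally I would observe that every $\mathbf{x}^* \in \mathbf{F}$ satisfies $|\widehat{\mathbf{x}^*}(0)|=1$ and $|\widehat{\mathbf{x}^*}(1)|=1$ by construction, so $\mathbf{x}^* \in \mathbf{A} \cap \mathbf{B}$, exhibiting $\mathbf{x}^*$ as an exact phase retrieval solution.

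The main obstacle is the verification that (\ref{link}) really holds in this setup, which comes down to showing that the restrictions encoded in $\mathbf{A}$ (especially $0 \le \operatorname{Re}\widehat{\mathbf{x}}(1) \le 1$) and the behavior of the iteration force $\operatorname{Im}\widehat{\mathbf{x}}(1)$ to take the sign consistent with $\mathscr{P}'$; once the iterates are pinned to that half-space, the set-valued ambiguity of $P_{\mathbf{B}}$ in the dropped coordinate becomes harmless and the projector identities reduce to coordinatewise projection onto the circles arising from the isometric splitting.
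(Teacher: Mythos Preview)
Your proposal is correct and follows essentially the same route as the paper: transport the Gerchberg--Saxton iterates to the cylinder--spiral example via the conjugation identity (\ref{link}), apply the lemma from \cite[Cor.~2]{douglas}, and pull the conclusions back through the isometric embedding $\mathscr{F}\circ\mathscr{P}'$. The paper presents the theorem as an immediate consequence of the preceding construction and simply cites \cite[Thm.~3]{douglas}; your write-up makes the transport argument explicit, which is fine. Regarding the obstacle you raise in your final paragraph, the paper disposes of it in one line by remarking that ``the fourth coordinate is fixed throughout'', i.e., both $\mathbf{A}$ and $\mathbf{B}$ sit in the affine slice $|\widehat{\mathbf{x}}(1)|=1$ and the iteration never moves the $\operatorname{Im}\widehat{\mathbf{x}}(1)$ component once the correct sign is selected; this is what makes (\ref{link}) hold and your concern legitimate but resolvable.
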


\section{Fienup's HIO-algorithm for phase retrieval}
\label{sect_hio}
Our construction can be used to show failure of convergence of other methods
used in phase retrieval, like hybrid input-output (HIO), relaxed averaged alternating reflections (RAAR), relaxed reflect reflect (RRR),
as those include the Douglas-Rachford algorithm for specific parameter values. We consider the Douglas-Rachford algorithm
\[
\x^+ = \x + P_{\A} (2P_{\B}(\x) - \x)  - P_{\B}(\x) = \textstyle\frac{1}{2} \left(R_{\A}R_{\B}   +I\right)(\x),
\]
where as before $\B$ is the magnitude set (\ref{magnitude}), and $\A$ gives prior information. We use again
\cite[Thm. 3]{douglas} to construct an example of failure of convergence.

Consider again the cylinder mantle $B$, but choose as set $A$ a double spiral
defined as follows:
\begin{equation}
\label{double_A}
a_\pm(t) = \left((1 \pm e^{-t}) \cos t,(1\pm e^{-t})\sin t, e^{-t/2}   \right) \in \mathbb R^3
\end{equation}
where $A_\pm = \{a_\pm(t): t \geq 0\}$ and $A = A_+ \cup A_- \cup F$.
The inner and outer spirals are mutual reflections of each other with respect to the cylinder mantle.
If we denote $b(t)\in B$ the projection of the two spirals on the mantle, then we obtain three curves
winding down inside, on, and around the cylinder toward the circle $F$ (see the picture in \cite{douglas}). 
If one starts a Douglas-Rachford iteration at some point
$x_1=a_-(t_1)\in A_-$ with $t_1 >0$  on the inner spiral, then $P_B(x_1) = b(t_1)$, hence $R_B(x_1) =a_+(t_1)\in A_+\subset A$,
and therefore $x_2 = (x_1+a_+(t_1))/2 = (a_-(t_1)+a_+(t_1))/2 = b(t_1) \in B$, which ends the first step of the DR-algorithm.
Now the second step starts at $x_2\in B$. The reflection in $B$ changes nothing $R_B(x_2)=x_2$, while reflection
in $A$ needs $P_A(x_2)=P_A(b(t_1))$, and as shown in \cite{douglas}, this projects always onto the inner spiral $A_-$, that is, we get
$P_A(b(t_1)) = P_{A_-}(b(t_1))= a_-(t_2)$ for some $t_2 > t_1$. 
Then $R_A(x_2) = 2a_-(t_2)-x_2$, which means $x_3 = a_-(t_2)\in A_-\subset A$. Hence after two DR-steps we are back
to the situation at the beginning, but at a slightly increased parameter value $t_2 > t_1$. 

As further shown in \cite{douglas}, the sequence $t_k$ so defined
satisfies $t_k\to \infty$ and $0\leq t_k-t_{k-1} \to 0$. That means, 
$$x_{2k-1}=a_-(t_k), x_{2k}= b(t_k)$$ 
and the $x_k$ fail to converge and wind around the cylinder
in the same way as the alternating projection sequence between $B$ and the inner spiral $A_-$. All points in $F$ are accumulation points
of the DR-sequence and also of the shadow sequences.

Now we lift this to produce a counterexample in the context of phase retrieval, using the same
method as in section \ref{spiral}.
We interpret the situation from the point of view of
the phase retrieval problem (\ref{fourier}), (\ref{magnitude}).
Since this is under-determined, we add the following prior information about 
$\x$, which is just a way to lift the double spiral $A$ into $\mathbb C^2$:
\begin{align}
\label{fat_Apm}
\notag
\A = \big\{\x \in \mathbb C^2: |\widehat{\x}(0)| = 1 \pm &\left({\rm Re}\, \widehat{\x}(1) \right)^2,  |\widehat{\x}(1)| = 1,  0 \leq {\rm Re}\,\widehat{\x}(1)\leq 1,\\
&\left.{\rm Re}\, \widehat{\x}(0)=(1\pm{\rm Re}\, \widehat{\x}(1)^2)\cos \left( \ln {\rm Re}\, \widehat{\x}(1)\right)
\right\} \cup {\bf F},
\end{align}
where ${\bf F}$ is as before.
Using (\ref{link}), we see that any Douglas-Rachford sequence for $\A,\B$ corresponds to a unique
Douglas-Rachford sequence for $A,B$. Therefore, based on \cite[Thm. 3]{douglas}, we derive the following

\begin{theorem}
The Fienup phase retrieval algorithm HIO for the two pixel reconstruction problem {\rm (\ref{fourier}), (\ref{magnitude})} 
with prior information  {\rm (\ref{fat_Apm})} just as well as the RAAR and RRR variants
fail to converge even though $\x_n-\x_{n+1}\to 0$, $\y_n-\y_{n+1}\to 0$, $\x_n-\y_n\to 0$. Every $\x^*\in {\bf F}$ 
is an accumulation point of the sequences $\x_n,\y_n$ and represents a possible exact solution of the phase retrieval problem.
\hfill $\square$
\end{theorem}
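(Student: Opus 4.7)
The plan is to reproduce, \emph{mutatis mutandis}, the Fourier-domain lifting carried out in Section~\ref{spiral} for Gerchberg-Saxton, with alternating projections replaced by Douglas-Rachford iterations. As noted in the preceding discussion, each of HIO, RAAR and RRR specialises to plain Douglas-Rachford, $\x^+ = \tfrac{1}{2}(R_\A R_\B + I)(\x)$, for a distinguished value of its internal parameter, so it is enough to exhibit a Douglas-Rachford sequence between the sets $\A$ and $\B$ of (\ref{fat_Apm}) and (\ref{fat_B_new}) that fails to converge.

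First I would extend the identity (\ref{link}) from projections to the reflections $R_\A = 2P_\A - I$ and $R_\B = 2P_\B - I$. Because $\mathscr F$ is unitary and $\mathscr P'$ is the adjoint of $\mathscr P$, the composition $\mathscr F \circ \mathscr P' \circ \mathscr P \circ \mathscr F'$ is the orthogonal projector of $\mathbb C^2$ onto the three-dimensional real subspace $V := \mathscr F(\mathscr P'(\mathbb R^3))$, i.e.\ the hyperplane $\mathrm{Im}\,\widehat{\x}(1) = 0$. Since the sets (\ref{fat_B_new}) and (\ref{fat_Apm}) are lifts of $B$ and $A$ through $\mathscr F \circ \mathscr P'$ together with the circle $\mathbf F \subset V$, any iterate started in $V$ stays in $V$. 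Within $V$ the $\mathbb C^2$ iteration is, via $\mathscr P \circ \mathscr F'$, exactly the $\mathbb R^3$ Douglas-Rachford iteration between the double spiral $A$ and the cylinder mantle $B$ started at $\mathscr P(\mathscr F'(\x_0))$.

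I would then invoke \cite[Thm.~3]{douglas}, which supplies an initial point $x_1 = a_-(t_1)$ on the inner spiral for which the DR-iterates satisfy $x_{2k-1} = a_-(t_k) \in A_-$, $x_{2k} = b(t_k) \in B$ with $t_k \to \infty$ and $0 \le t_k - t_{k-1} \to 0$, so that $x_n - x_{n+1} \to 0$ while the whole circle $F$ is the set of accumulation points and no single limit is attained. The corresponding shadow projections $P_A(x_n), P_B(x_n)$ share this behaviour by continuity of $P_A, P_B$ near $F$. Lifting back through (\ref{link}) produces the two sequences $\x_n, \y_n \in \mathbb C^2$ with $\x_n - \x_{n+1} \to 0$, $\y_n - \y_{n+1} \to 0$, $\x_n - \y_n \to 0$ and accumulation set $\mathbf F \subset \A \cap \B$; since $\mathbf F \subset \A \cap \B$, each accumulation point represents an exact solution of the phase retrieval problem.

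The main obstacle, aside from the routine bookkeeping matching each of HIO, RAAR and RRR to DR at its distinguished parameter, is verifying that the lifted set (\ref{fat_Apm}) genuinely encodes the double spiral without introducing spurious branches coming from the freedom in $\mathrm{Im}\,\widehat{\x}(1)$, and that the reflection version of (\ref{link}) is valid on $V$. Both issues collapse once one observes that the iteration, started in the hyperplane $V$, never leaves it, so only the physical branch is ever visited.
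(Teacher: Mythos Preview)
Your proposal is correct and follows essentially the same route as the paper: the paper also reduces everything to the Douglas--Rachford iteration between the double spiral $A$ and the cylinder mantle $B$ in $\mathbb R^3$, analyses that iteration explicitly (obtaining $x_{2k-1}=a_-(t_k)$, $x_{2k}=b(t_k)$ with $t_k\to\infty$, $t_k-t_{k-1}\to 0$ from \cite[Thm.~3]{douglas}), and then lifts to $\mathbb C^2$ via the correspondence (\ref{link}). Your explicit invariance argument for the hyperplane $V$ is a careful justification of what the paper simply asserts when it says ``any Douglas--Rachford sequence for $\A,\B$ corresponds to a unique Douglas--Rachford sequence for $A,B$''; note only that the fixed fourth coordinate in the paper's construction is governed by the constraint $|\widehat{\x}(1)|=1$ together with (\ref{fat_F1}), so the precise description of $V$ should be adjusted accordingly.
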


\begin{remark}
Recall that the DR-algorithm is asymmetric with regard to $A,B$,
so one may wonder whether changing order and using
$\frac{1}{2} (R_{B}R_{A}+I)$ gives still failure of convergence. 
We now reflect first in the double spiral, then in the cylinder mantle, and then average.
Starting at $x_1=a_+(t_1)\in A_+$ in the outer spiral, we get $R_A(x_1)=x_1$, and then $R_B(R_A(x_1)) = a_-(t_1)\in A_-$, so that
averaging gives $x_2 = (a_-(t_1)+a_+(t_1))/2 = b(t_1)\in B$. Now $R_A(x_2)=2a_-(t_2)-x_2$ for $a_-(t_2) = P_{A_-}(b(t_1))$,
and then $R_B(R_A(x_2)) = 2a_+(t_2)-x_2$, so that averaging gives $x_3 = a_+(t_2)$, when we are back ion $A_+$ with a slightly
enlarges $t_2 > t_1$. So here we can see that the DR-iterates follow
alternating projections between $B$ and $A_+$, and convergence fails again.
\end{remark}

\begin{remark}
Convergence theory of the DR-algorithm for phase retrieval is even less advanced than for alternating projections.
Even the most pertinent currently available result \cite{hung} needs some form of transversality of $\A \cap \B \not=\emptyset$, which
as we argued above is impossible to verify in practice. It is therefore of interest to dispose at least of a limiting counterexample. 
\end{remark}

\section{Gaussian EM-algorithm revisited}
\label{EM}
The following situation involves a special case of the EM-algorithm for gaussian
random vectors with unknown mean and known variance. It can be used in
image restoration methods; cf. Bauschke {\em et al.} \cite{dynEM}, where this has been applied to emission tomography.

We consider a random vector $Y$ with joint  distribution $f_Y(y|x)$ representing the incomplete data space,
where the law depends linearly on a parameter $x\in \Omega \subset \mathbb R^n$ via
\[
E(Y_j|x) = \sum_{i=1}^n c_{ji} x_i, \quad j=1,\dots,m.
\]
Defining $C=(c_{ji})$, this can be written as $E(Y|x) = Cx$, where $C$ may typically
lead to a certain loss of information.
Suppose a sample $y\in \mathbb R^m$ of $Y$ is given,
then the maximum likelihood estimation problem is
\begin{eqnarray*}
\begin{array}{ll}
\mbox{minimize} & -\ln f_Y(y|x) \\
\mbox{subject to} & x \in  \Omega
\end{array}
\end{eqnarray*}
Now assume that $Z$ is a random vector of size $nm$ and joint distribution $f_Z(z|x)$, depending on the parameter $x\in \Omega$, 
representing the complete data space, where
\[
E(Z_{ji}|x) = c_{ji} x_i.
\]
Introducing the linear operator $\Gamma: x \mapsto c_{ji}x_i$, this reads $E(Z|x) = \Gamma x$. 
Assuming that maximum likelihood estimation is easier in complete data space, one applies 
the well-known EM-algorithm, which is the following alternating procedure:

\begin{algorithm}[!ht]
\caption{\!\!{\bf .} EM-algorithm}
\begin{algorithmic}[1]
\STEP{E-step} Given current parameter estimate $x^{(t)}\in \Omega$, supply completed data by computing
conditional expectation
$$z^{(t)} = E\left(Z|Z_{j1}+\dots+Z_{jn} =y_j,x^{(t)}\right).$$

\STEP{M-step} Given completed data sample $z^{(t)}$ for $Z$, perform maximum likelihood
estimation in complete data space
\begin{eqnarray*}
\begin{array}{ll}
\mbox{minimize} & -\ln f_Z(z^{(t)}|x) \\
\mbox{subject to} & x \in  \Omega
\end{array}
\end{eqnarray*}
The result is the new parameter estimate $x^{(t+1)}\in \Omega$.

\end{algorithmic}
\end{algorithm}

If we consider the case where $Y,Z$ are independent and normally distributed with known variance $\sigma^2$, 
the E-step has the explicit form
\begin{equation}
    \label{Estep}
z_{ji}^{(t)}= \frac{1}{n} y_j+c_{ji} x_i^{(t)} - \frac{1}{n} \sum_{i'=1}^n c_{ji'} x_{i'}^{(t)},
\end{equation}
which is the orthogonal projection of the estimate $v^{(t)}\in \mathbb R^{nm}$ with $v_{ji}^{(t)}=c_{ji} x_i^{(t)}$ onto the set
$B = \{z\in \mathbb R^{nm}: z_{j1}+\dots+z_{jn} = y_j, j=1,\dots,m\}$. At the same time, the M-step
as well turns out to be an orthogonal projection, namely, the orthogonal projection of $z^{(t)}$  onto the set
\[
A = \{v \in \mathbb R^{mn}: v = \Gamma x \mbox{ for some $x\in \Omega$}\},
\]
where $v^{(t+1)} \in P_A(z^{(t)})$. 
This leads now to the following
\begin{theorem}
Suppose $\Omega$ is a bounded closed subanalytic set, and consider sequences $z^{(t)}, x^{(t)}$ and $v^{(t)}$ generated by the
Gaussian EM-algorithm with known variance $\sigma^2$. Then the sequence $z^{(t)}$ converges to  a
limit $z^*$ with rate $\|z^{(t)} - z^*\| = O(t^{-\rho})$ for some $\rho > 0$. Moreover, if
$x^*\in \Omega$ is any of the accumulation points of the $x^{(t)}$, then 
$z^*=E(Z|Z_{j1}+\dots+Z_{jn}=y_j,x^*)=\frac{1}{n} y_j+c_{ji} x_i^* - \frac{1}{n} \sum_{i'=1}^n c_{ji'} x_{i'}^*$, $x^*$ is a critical point of the
complete data space maximum likelihood estimation problem
$\min\{-\ln f_Z(z^*|x): x\in \Omega\}$, and 
$z_{ji}^* - c_{ji}x_i^*$ is independent of $i$ for every $j$.
\end{theorem}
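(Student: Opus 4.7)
The plan is to cast the EM iteration as alternating projections between two closed subanalytic sets and then invoke Corollary \ref{subanalytic}. Following the paragraph preceding the theorem, set $v^{(t)} = \Gamma x^{(t)}$, so that the E-step reads $z^{(t)} = P_B(v^{(t)})$ and the M-step reads $v^{(t+1)} \in P_A(z^{(t)})$, with $B = \{z \in \mathbb R^{nm}: z_{j1}+\dots+z_{jn}=y_j\}$ and $A = \Gamma(\Omega) \subset \mathbb R^{nm}$. Since $\Gamma$ is linear and $\Omega$ is bounded subanalytic, $A$ is bounded and subanalytic; $B$ is an affine subspace, hence convex, subanalytic, and prox-regular with infinite reach at every point. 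Consequently, the alternating sequence $v^{(t)}, z^{(t)}$ is bounded (use that $\|v^{(t)} - z^{(t)}\|$ is nonincreasing), and for its gap $(A^*,B^*,r^*)$ one trivially has $r^* < \infty = $ reach of $B$ at $B^*$.

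Applying Corollary \ref{subanalytic} to the $B$-iterates then yields $z^{(t)} \to z^*$ for some $z^* \in B$ at rate $\|z^{(t)} - z^*\| = O(t^{-\rho})$ for some $\rho > 0$, which is the first claim.

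For the identification of $z^*$, observe that $z^{(t)}$ is given by $x^{(t)}$ through the explicit continuous formula (\ref{Estep}). Along any subsequence $x^{(t_k)} \to x^*$ we also have $z^{(t_k)} \to z^*$ by the convergence of the whole sequence; passing to the limit in (\ref{Estep}) gives $z^*_{ji} = \tfrac{1}{n} y_j + c_{ji} x^*_i - \tfrac{1}{n} \sum_{i'} c_{ji'} x^*_{i'}$, which is the asserted conditional expectation. Rearranging as $z^*_{ji} - c_{ji} x^*_i = \tfrac{1}{n}\bigl(y_j - \sum_{i'} c_{ji'} x^*_{i'}\bigr)$ shows this quantity is independent of $i$, giving the last claim.

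The critical-point assertion is obtained by passing to the limit in the M-step. Setting $v^* = \Gamma x^*$, the inclusions $v^{(t_k)} \in P_A(z^{(t_k-1)})$ together with $v^{(t_k)} \to v^*$ and $z^{(t_k-1)} \to z^*$ force $v^* \in P_A(z^*)$ by outer semicontinuity of the projector onto the closed set $A$. Unwinding $A = \Gamma(\Omega)$, this says that $x^*$ globally minimizes $\|z^* - \Gamma x\|^2$ over $x \in \Omega$, which up to additive and multiplicative constants is $-\ln f_Z(z^*|x)$ in the Gaussian model; in particular, $x^*$ is a critical point of the complete-data maximum likelihood estimation problem. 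The main point of the argument is the reduction to Corollary \ref{subanalytic}; all remaining claims are bookkeeping built on the explicit E-step formula and outer semicontinuity of the projection.
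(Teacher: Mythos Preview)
Your proof is correct and follows essentially the same route as the paper: recast the EM iteration as alternating projections between $A=\Gamma(\Omega)$ and the affine set $B$, observe that $B$ has infinite reach, and invoke the subanalytic convergence result to obtain $z^{(t)}\to z^*$ with rate $O(t^{-\rho})$; the remaining claims then follow by passing to the limit in (\ref{Estep}). Two minor differences are worth mentioning: you justify subanalyticity of $A$ via the image of a bounded subanalytic set under a linear map, whereas the paper describes $A$ by explicit defining relations; and you supply the critical-point argument (via outer semicontinuity of $P_A$ and the identification of the Gaussian log-likelihood with $\|z^*-\Gamma x\|^2$), which the paper's own proof leaves implicit.
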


\begin{proof}
The main convergence theorem gives convergence $z^{(t)} \to z^*$ with rate $O(t^{-\rho})$ if we consider that
$B$, being an affine subspace, has infinite reach, while $A$ is subanalytic. The latter
follows because  $A$ can be defined equivalently by
the relations $(v_{j1}/c_{j1})\in \Omega$, $v_{ji}c_{1i} = v_{1i}c_{ki}$. 

Clearly $v^{(t)} = \Gamma x^{(t)}$ implies $v^*=\Gamma x^*$ for every accumulation point $x^*\in \Omega$ of the $x^{(t)}\in \Omega$. 
Now from (\ref{Estep}) $z_{ji}^* - \frac{1}{n} y_j = c_{ji}x_i^* - \frac{1}{n} \sum_{i'=1}^n c_{ji'} x_{i'}^*$ for all $i,j$
we see that for two accumulation points $x^*_1,x^*_2\in \Omega$ the shift
$x^*_1-x^*_2$ is in the kernel of the operator $(\Gamma x)_{ji}-\frac{1}{n} (Cx)_j$, because the left hand term is the same for every $x^*$. 
It also follows that $z_{ji}^* - c_{ji}x_i^*= \frac{1}{n}y_j - \frac{1}{n} \sum_{i'=1}^n c_{ji'} x_{i'}^*$ is independent of $i$ for every $j$.
\hfill $\square$
\end{proof}

\begin{remark}
The result is interesting for two reasons. Firstly, even for this very elementary case no convergence result
has been known for a non-convex parameter set $\Omega$ since the 1970s. For a convex $\Omega$ convergence follows of course
from the classical convergence result \cite{bauschke-survey}. The second aspect is that some insight into
the speed of convergence is provided. This has been a point of vivid interest in various forms of the EM-algorithm,
and our result suggests that the speed $O(t^{-\rho})$ can be extremely slow. Note also that the M-step may be optimized locally,
which is convenient when $\Omega$ is 'curved'. 
\end{remark}

\begin{remark}
In \cite{dynEM} this method is applied to dynamic SPECT imaging with slow camera rotation, where
$x_{ik} = x_i(t_k)$ represents the unknown tracer activity in voxel $i$ at angular camera position $\theta_k=k\Delta\theta$ 
at time $t_k=k\Delta t$, while $y_{jk} = y_j(t_k)$ is  the sinogram, 
i.e., the activity received in camera bin $j$ at position $\theta_k$ and time $t_k$,
with $C$ the linear operator representing
camera geometry and collimator specifications. The artificial complete data $z_{ijk}=z_{ij}(t_k)$ 
represent that part of the activity emanating from voxel $i$ toward camera bin $j$ at time $t_k$ and camera position $\theta_k$.
Due to missing data, a dynamic model of the form $x_i(t) = A_ie^{-\lambda_i t} + B_ie^{-\mu_it} + C_i$ is imposed,
giving rise to the non-convex set $\Omega$. 

In \cite{maeght} a Prony type model
$x_{ik} -\alpha_{1i} x_{i,k-2} -\alpha_{2i} x_{i,k-1} - \alpha_{1i}=0$ is used instead to implement
a constraint on the tracer dynamics, giving rise to yet another non-convex parameter set $\Omega$, to which our convergence result applies.
\end{remark}

\begin{remark}
The averaged projection method can be obtained as a special case of the Gaussian EM-algorithm. Let $\Omega= C_1 \times (-C_2) \times C_3 \times\dots \times (\pm C_m)$
and $\Gamma = I$. Then the M-step is equivalent to the coordinatewise projection
$x_i \in P_{C_i}(x)$. For the E-step we we have to come up with the operator $C$, which
we model as $x_1-x_2=0$, $-x_2+x_3=0, \dots$. Then averaging is the E-step (\ref{Estep}) with data vector $y=0$. 
\end{remark}

\section{Structured low-rank approximation}
\label{sect-cadzow}

Structured
low-rank approximation 
has  the general form:
\begin{equation}
\label{cadzow}
\mbox{find a matrix $S \in A$ such that
{\rm rank}$(S) \leq r$},
\end{equation}
where $A \subset \mathbb C^{n \times m}$ is a closed set of {\it structured} $n \times m$-matrices, and $r \ll \min(n,m)$.  Letting
$B = \{R \in \mathbb C^{n \times m}: {\rm rank}(R) \leq r\}$,
we seek a matrix $S \in A \cap B$ which has structure {\it and} low rank,
and  this is addressed via alternating projections between $A,B$ in the euclidean space $\mathbb C^{n \times m}$,
equipped with the Frobenius norm $\|\cdot\|_F$. Motivated by \cite{cadzow1,cadzow2}, see Example \ref{basic} below, 
we call the corresponding alternating sequence
$$
R_k \in P_B(S_k), S_{k+1} \in P_A(R_k), k=1,2,\dots
$$
a {\it Cadzow alternating sequence}, and its limit $S^* \in A \cap B$ a Cadzow solution of (\ref{cadzow}).

Projections
 $R \in P_B(S)$  on the low-rank set are obtained by
singular value decomposition (SVD). Let $S = U \Sigma V^T$ with $\Sigma = {\rm diag}(\sigma_1, \dots, \sigma_{\min(n,m)})$ and $\sigma_1 \geq \sigma_2  \geq \dots$ be an SVD
of $S$,
then every $r$-truncation  $\Sigma' = {\rm diag}(\sigma_1,\dots,\sigma_r,0,\dots)$  of $\Sigma$, i.e.,  keeping  $r$ largest 
singular values and zeroing the others, gives rise to an element 
$R=U\Sigma'V^T\in  P_B(S)$. 
Assuming $\sigma_1 \geq \dots \geq \sigma_{k-1} > \sigma_k = \dots = \sigma_r = \dots = \sigma_\ell > \sigma_{\ell+1} \geq \dots$
for certain $k \leq r \leq \ell$, 
we have ${\ell-k+1\choose r-k+1}$ possibilities to choose such an $r$-truncation $\Sigma'$ of $\Sigma$, and since each gives rise to
a unique $R = U\Sigma'V^T$,  this is the cardinality of
$P_B(S)$. Since $\|\Sigma'-\Sigma''\|_F \geq 2\sigma_r$ for any two $r$-truncations, it follows that $B$ has positive reach $\sigma_r$
at every projected point
$R$, and on $\mathcal B(R,\sigma_r)$ the projection $P_B$ is single valued.
This leads now to our first
result.

\begin{theorem}
\label{cadzow1}
Let $A \subset \mathbb C^{n\times m}$ be a closed subanalytic set of structured matrices, $B$ matrices of rank $\leq r$.
Let $R_k,S_{k}$ be a bounded Cadzow alternating sequence with gap $(A^*,B^*,r^*)$, and suppose
$r^* < \sigma^*_r$ for the reach $\sigma^*_r = \min\{\sigma_r(R): R \in B^*\}$ of $B^*$. Then the $R_k$ converge to a low rank matrix
$R^*\in B$ with speed  $\|R_k-R^*\|_F =O(k^{-\rho})$ for some $\rho >0$. All accumulation points $S^*\in A$ of the sequence
$S_k$ are structured matrices, and all admit $R^*$ as their low-rank approximation. If in addition $r^*=0$, then $S_k\to R^*\in A \cap B$
with the same speed.
\end{theorem}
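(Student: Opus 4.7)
The plan is to invoke Corollary \ref{subanalytic} as a black box, since that corollary is tailored exactly for bounded alternating sequences between a subanalytic set and a prox-regular subanalytic set when the gap stays below the reach. Two hypotheses must therefore be verified: (i) both $A$ and $B$ are closed subanalytic, and (ii) $B$ is prox-regular at the points of $B^*$ with reach strictly larger than $r^*$.

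For (i), $A$ is subanalytic by hypothesis. The set $B$ of matrices of rank $\leq r$ is in fact semi-algebraic: after identifying $\mathbb{C}^{n\times m}$ with $\mathbb{R}^{2nm}$, the constraint $\operatorname{rank}(R) \leq r$ is cut out by the simultaneous vanishing of all $(r+1)\times(r+1)$ minors, each of which is a polynomial in the real and imaginary parts of the entries; semi-algebraic sets are subanalytic. For (ii), I would exploit the SVD analysis carried out just before the theorem statement: at any $R\in B$ with $\sigma_r(R)>0$, the projector $P_B$ is single-valued on the open ball $\mathcal{B}(R,\sigma_r(R))$, and this ball is realized along the proximal normal directions obtained by perturbing retained singular values; this gives prox-regularity at $R$ with reach at least $\sigma_r(R)$. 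Since by hypothesis $r^*<\sigma_r^*=\min\{\sigma_r(R):R\in B^*\}$, there is a uniform lower bound on the reach along $B^*$ that strictly exceeds $r^*$, which is exactly what Corollary \ref{subanalytic} requires.

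With (i) and (ii) in place, Corollary \ref{subanalytic} directly delivers $R_k\to R^*\in B$ with rate $\|R_k-R^*\|_F=O(k^{-\rho})$ for some $\rho>0$. To identify the accumulation points of the sequence $S_k$, I would use continuity of $d_B$ and the definition of the gap $r^*=\inf_k\|S_k-R_k\|_F$: for any accumulation point $S^*\in A$ one has $\|S^*-R^*\|_F=r^*$ because $\|S_k-R_k\|_F\to r^*$ and $R_k\to R^*$; moreover, since $R_k\in P_B(S_k)$, taking limits along the relevant subsequence together with closedness of the graph of $P_B$ (a consequence of lower semicontinuity of $d_B$ and closedness of $B$) gives $R^*\in P_B(S^*)$, so that $R^*$ is a low-rank approximation of every such $S^*$. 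Finally, in the special case $r^*=0$, the estimate $\|S_k-R_k\|_F\to 0$ coupled with $R_k\to R^*$ forces $S_k\to R^*$ as well, at the same rate, with $R^*\in A\cap B$.

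The main technical step I expect to require careful bookkeeping is the prox-regularity estimate in (ii), specifically ensuring that the reach at $R\in B^*$ genuinely controls single-valued projection for all nearby matrices, including those whose $r$-th and $(r{+}1)$-th singular values may come close to coinciding. The hypothesis $r^*<\sigma_r^*$ is however designed precisely to provide a uniform gap between $\sigma_r$ and $\sigma_{r+1}$ on a neighborhood of $B^*$, so the single-valuedness ball centred at each projected iterate eventually dominates $r^*$ uniformly, and the invocation of Corollary \ref{subanalytic} goes through without further adjustment.
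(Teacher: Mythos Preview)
Your proposal is correct and follows essentially the same route as the paper: establish that $B$ is semi-algebraic via vanishing of all $(r{+}1)\times(r{+}1)$ minors, note that $A$ is subanalytic by hypothesis, invoke the prox-regularity of $B$ with reach $\sigma_r$ coming from the SVD discussion preceding the theorem, and then apply the main convergence machinery. The only cosmetic difference is that the paper cites Corollary~\ref{global} (and \cite[Thm.~1]{noll} for $r^*=0$) whereas you cite Corollary~\ref{subanalytic}, which is the subanalytic specialization of the same result; your choice is in fact the more direct black box. Your additional remarks on why every accumulation point $S^*$ has $R^*\in P_B(S^*)$ and on the $r^*=0$ case spell out details the paper leaves implicit, and are fine (though note $d_B$ is Lipschitz, not merely lower semicontinuous).
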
 

\begin{proof}
Since $R\in B$ iff the determinants of all $(r+1)\times(r+1)$-minors of $R$ vanish,  $B$ is 
the solution set of a finite number of polynomial equations, i.e., a semi-algebraic variety, also known as determinantal variety of dimension
$r(n+m-r)$;  \cite{determinantal}. 
Since $A$ is subanalytic by hypothesis and $B$ is prox-regular and closed, we may apply our convergence theory.
For $r^*>0$ 
we use Corollary \ref{global}, whereas the case $r^*=0$ is already contained in \cite[Thm.1]{noll}.
\hfill $\square$
\end{proof}

The limitation here is that the attracting neighborhoods $\mathcal N(A^*,\delta), \mathcal N(B^*,\delta)$ of the gap $(A^*,B^*,r^*)$ may be small, 
as $\delta$ 
depends on the reach $\sigma^*_r$ of $B$ at the $R^*\in B^*$. 
Often we can do better, since usually the structure 
set $A$ has additional properties.

\begin{theorem}
\label{structure}
Let the structure set $A$ be closed subanalytic and prox-regular. Let $R_k,S_k$ be a bounded Cadzow sequence
with gap $(B^*,A^*,r^*)$, where $r^* <  \rho^*$ for the reach $\rho^*$ of $A^*$. Then the $S_k$ converge to a structured matrix $S^*\in A$ with speed
$\|S_k-S^*\|_F = O(k^{-\rho})$
for some $\rho >0$. The sequence $R_k$ has a finite set of accumulation points $R^*\in B$, and each $R^*$ is a low-rank
 approximation of $S^*$. If in addition $r^* < \sigma_r^*$ for the $r$th singular value $\sigma_r^*$ of $S^*$, then the sequence $R_k$ converges to
 a unique low-rank approximation $R^*$ of $S^*$. The same is true when $\sigma_r^* > \sigma_{r+1}^*$, or when $\limsup_{k\to \infty} \|R_k-R_{k-1}\|_F <2\sigma_r^*$. 
\end{theorem}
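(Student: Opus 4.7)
The plan is to apply the paper's convergence apparatus in the reverse orientation: cast the Cadzow sequence as prox-alternating projections to which Corollary \ref{subanalytic} applies, with our structure set $A$ playing the role of the prox-regular set there (the corollary's ``$B$'') and the low-rank variety $B$ playing the role of its ``$A$''. Concretely, one re-indexes so that our $S_k$ becomes the corollary's $b_k$ and our $R_{k-1}$ its $a_k$. The hypotheses are met: both sets are closed subanalytic (the low-rank variety by vanishing of $(r{+}1)\times (r{+}1)$ minors, as used in the proof of Theorem \ref{cadzow1}; the structure set $A$ by assumption), $A$ is prox-regular, and the gap satisfies $r^* < \rho^*$ with $\rho^*$ the reach of $A$ at $A^*$. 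This delivers $S_k \to S^* \in A$ with rate $\|S_k-S^*\|_F = O(k^{-\rho})$, the explicit speed coming from Theorem \ref{rate}.

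With $S_k \to S^*$ in hand, closedness of the graph of $P_B$ (a consequence of closedness of $B$) forces every accumulation point of the bounded sequence $R_k \in P_B(S_k)$ into $P_B(S^*)$. Writing the singular values of $S^*$ as $\sigma_1^* \geq \dots \geq \sigma_{p-1}^* > \sigma_p^* = \dots = \sigma_r^* = \dots = \sigma_q^* > \sigma_{q+1}^* \geq \dots$ with $p \leq r \leq q$, the SVD description recalled just before Theorem \ref{cadzow1} identifies $P_B(S^*)$ as a set of $\binom{q-p+1}{r-p+1}$ rank-$r$ truncations, pairwise at Frobenius distance at least $2\sigma_r^*$. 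Hence the accumulation points of $R_k$ form a finite subset of $P_B(S^*)$, each a low-rank approximation of $S^*$.

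For the three uniqueness claims, the driving identity will be
\[
r^{*2} \;=\; \lim_{k\to\infty}\|R_k-S_k\|_F^2 \;=\; \lim_{k\to\infty}\sum_{i>r}\sigma_i(S_k)^2 \;=\; \sum_{i>r}\sigma_i(S^*)^2 \;\geq\; \sigma_{r+1}^{*2},
\]
obtained from Eckart-Young-Mirsky applied to the rank-$r$ projection together with continuity of singular values in Frobenius norm. The hypothesis $r^* < \sigma_r^*$ then forces $\sigma_{r+1}^* \leq r^* < \sigma_r^*$, reducing to the case $\sigma_r^* > \sigma_{r+1}^*$; in that case $p=q=r$ in the SVD picture, so $P_B(S^*)$ is a singleton and $R_k\to R^*$. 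For the last hypothesis $\limsup_k\|R_k-R_{k-1}\|_F<2\sigma_r^*$, I select $\varepsilon>0$ with $2\varepsilon < 2\sigma_r^* - \limsup_k\|R_k-R_{k-1}\|_F$; the balls $\mathcal B(R^*,\varepsilon)$, $R^*\in P_B(S^*)$, are then pairwise disjoint and cover the tail of the $R_k$, and consecutive iterates in balls around distinct accumulation points would yield $\|R_k-R_{k-1}\|_F \geq 2\sigma_r^*-2\varepsilon$, contradicting the choice of $\varepsilon$; hence the tail of $R_k$ is trapped in a single ball and converges.

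The main obstacle will be the first step, because the convergence theorems of the preceding sections are asymmetric and produce the limit on the prox-regular side of the alternation; Corollary \ref{subanalytic} has to be re-read carefully with the roles of $A$ and $B$ swapped, and one has to confirm that prox-alternation still holds after the reindexing. The remaining steps are essentially combinatorial: unpacking the SVD inventory of $P_B(S^*)$ and a short pigeon-hole argument trapping the tail of $R_k$ in a single ball.
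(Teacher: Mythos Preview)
Your proposal is correct and follows essentially the same route as the paper: apply the main convergence machinery to the dual gap $(B^*,A^*,r^*)$ with the prox-regular structure set $A$ in the role of the regular set, obtain $S_k\to S^*$ with rate, then read off the finiteness of the $R_k$-accumulation points from the SVD description of $P_B(S^*)$ and handle the three uniqueness clauses by the separation of the truncations.

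The only noteworthy difference is your treatment of the hypothesis $r^*<\sigma_r^*$. The paper argues directly that $r^*<\sigma_r^*$ places the $S_k$ within reach of $B$, so $P_B$ is single-valued and locally Lipschitz near $S^*$, which yields convergence of $R_k$ (and in fact the same rate $O(k^{-\rho})$, a bonus not asserted in the statement). You instead use Eckart--Young--Mirsky to compute $r^{*2}=\sum_{i>r}\sigma_i(S^*)^2\geq\sigma_{r+1}^{*2}$, forcing $\sigma_{r+1}^*\leq r^*<\sigma_r^*$ and thereby reducing to the already-settled case $\sigma_r^*>\sigma_{r+1}^*$. Your reduction is clean and self-contained; the paper's reach argument is shorter and buys the rate for $R_k$ as well.
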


\begin{proof}
Here we apply the main convergence theorem to the dual gap $(B^*,A^*,r^*)$, where it is now the reach of $A^*$
that matters. We obtain convergence $S_k\to S^*\in A$ from the main convergence theorem. The specific structure of $P_B$  assures that the set of accumulation points
$R^*$ of the $R_k$ is finite,
and clearly every such $R^*$ is a low rank approximation of the same $S^*$.

For $\sigma_r^* > \sigma_{r+1}^*$
the projection $R^*=P_B(S^*)$ is single valued, hence the $R_k$ converge to $R^*$,
and the same is true
for $\|R_{k}-R_{k-1}\|_F \leq 2\sigma_r^* -\epsilon$ for $k \geq k_0$, because the distance between two elements $R^*\in B^*$ is $2\sigma_r^*$, hence the sequence
$R_k$  can then have 
only one accumulation point, to which it converges.
Finally, for $r^* < \sigma_r^*$, the projection $P_B$ is single-valued and locally Lipschitz, so the $R_k$ converge to
$R^*=P_B(S^*)$ with the same speed $\|R_k-R^*\|_F = O(k^{-\rho})$.
\hfill $\square$
\end{proof}

In most applications the set $A$ is convex and subanalytic, or even affine, in which case the sequence $S_k$, when bounded,  converges from an arbitrary starting point,
while the $R_k$ still admit their finite set of accumulation points as described above. In the literature Cadzow's method is usually
presented for affine $A$, but we use the term in a broader sense, because we get convergence for a much broader class of structures $A$. 

\begin{example}
\label{basic}
Historically the first application is {\it Cadzow's basic algorithm}  in signal de-noising; cf. \cite{cadzow1,cadzow2}.  Given a Toeplitz
matrix $\widetilde{T}\in \mathbb C^{n \times m}$, encoding
a noisy signal, 
one wishes to solve the problem:
\begin{eqnarray}
\label{denoise}
\begin{array}{ll}
\mbox{minimize} & \|T-\widetilde{T}\|_F \\
\mbox{subject to} & {\rm rank}(T) \leq r, \mbox{ $T$ Toeplitz }
\end{array}
\end{eqnarray}
where the de-noised signal is encoded in the solution $T$ of (\ref{denoise}). Letting $A$ be
the set of Toeplitz matrices, Cadzow's heuristic \cite{cadzow1} consists in projecting alternatively on $A,B$, starting at $\widetilde{T}\in A$,
$R_1 \in P_B(\widetilde{T})$,
$T_{k+1}=P_A(R_k),R_{k}\in P_B(T_k)$. 
Here
$T=P_A(R)$,  the nearest Toeplitz matrix to a given matrix $R$, is  obtained explicitly by fixing the
value in each diagonal of $T$ as the average of the values in the corresponding diagonal in $R$:
$T_{1+k,i+k} = T_{1i}= (R_{1i}+R_{2,i+1}+\dots+R_{n-i+1,n})/(n-i+1)$.
This example motivated our nomenclature.
\end{example}

\begin{corollary}
 {\bf (Global convergence for Cadzow)}.
 Let $A$ be closed convex and subanalytic. Then
 every bounded Cadzow sequence $S_k\in A$ converges
 to $S^*\in A$ with speed $\|S_k-S^*\|_F = O(k^{-\rho})$ for some $\rho >0$. The corresponding low rank $R_k \in B$ have a finite set of low-rank accumulation points
 $R_1^*\dots,R_N^*\in B$, where $S^*$ is the nearest structured matrix to each $R_i^*$, and where each
 $R_i^*$ is a $r$-truncated SVD of $S^*$. Convergence of the $R_k$ to a single $R^*$ occurs under any of the additional conditions in Theorem {\rm \ref{structure}}.
\end{corollary}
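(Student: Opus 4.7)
The plan is to obtain this corollary by direct specialization of Theorem \ref{structure}, exploiting that convexity makes the dual reach hypothesis automatic. Since $A$ is convex, it is prox-regular at every point with infinite reach. Hence for any gap $(B^*,A^*,r^*)$ produced by a bounded Cadzow sequence, the reach $\rho^*$ of $A^*$ is $+\infty$, so the requirement $r^* < \rho^*$ in Theorem \ref{structure} is trivially met. Together with the subanalyticity assumption on $A$, all hypotheses of Theorem \ref{structure} hold, which immediately gives $S_k \to S^* \in A$ with $\|S_k-S^*\|_F = O(k^{-\rho})$ for some $\rho>0$.

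To read off the structure of the accumulation points of the $R_k$, I would invoke the discussion of $P_B$ preceding Theorem \ref{cadzow1}. By that discussion, if $S = U\Sigma V^T$ is an SVD, then every element of $P_B(S)$ is obtained by an $r$-truncation of $\Sigma$, and there are at most ${\ell-k+1\choose r-k+1}$ such truncations corresponding to ties in the singular values at position $r$; in particular $P_B(S)$ is a finite set. Since $S_k \to S^*$ and $R_k \in P_B(S_k)$, standard outer-semicontinuity of $P_B$ at $S^*$ (using its SVD description) shows that every accumulation point $R^*$ of the $R_k$ belongs to $P_B(S^*)$, hence the set of accumulation points is finite, say $R_1^*,\dots,R_N^*$, and each $R_i^*$ is an $r$-truncated SVD of $S^*$.

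To see that $S^*$ is the nearest structured matrix to each $R_i^*$, I would pass to the limit along a subsequence $R_{k_j}\to R_i^*$: since $S_{k_j+1}\in P_A(R_{k_j})$ and $A$ is convex, the projector $P_A$ is single-valued and continuous, so $S^* = \lim S_{k_j+1} = P_A(R_i^*)$. This identifies $S^*$ as the nearest point in $A$ to each $R_i^*$ as claimed. The additional sufficient conditions for convergence of the whole sequence $R_k \to R^*$ are inherited verbatim from Theorem \ref{structure}.

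There is no serious obstacle here; the corollary is essentially a repackaging. The only subtlety worth checking carefully is the finiteness of the accumulation-point set of $R_k$, which must be derived from the SVD description of $P_B(S^*)$ rather than from any generic argument, since for a non-convex prox-regular $B$ the outer-semicontinuity of $P_B$ at $S^*$ is delicate. However, here the SVD characterization makes this completely explicit.
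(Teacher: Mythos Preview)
Your proposal is correct and follows essentially the same route as the paper's own proof: invoke infinite reach of the convex $A$ so that Theorem \ref{structure} applies unconditionally to the dual gap $(B^*,A^*,r^*)$, conclude $S_k\to S^*$ with rate, then use the SVD description of $P_B$ together with outer-semicontinuity to see that accumulation points of the $R_k$ lie in the finite set $P_B(S^*)$, and use single-valued continuity of $P_A$ to get $S^*=P_A(R_i^*)$. The paper's argument is terser but identical in substance.
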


\begin{proof}
 The set $A$ is subanalytic and convex, hence of infinite reach, and since $B$ is subanalytic, the sequence $S_k$ is now convergent with limit $S^*\in A$
 for an arbitrary starting point. 
 All accumulation points $R^*_i$ of the sequence $R_k$ satisfy $R_i^*\in P_B(S^*)$, and we have $S^*=P_A(R_i^*)$ for every $i$. From the discussion above
 we know that there are only finitely many such accumulation points.
 
 Convergence to a single low rank $R^*$ occurs under any of the
 conditions in Theorems \ref{cadzow1},\ref{structure}, that is, when $\sigma_r^* > \sigma_{r+1}^*$, or when the $R_k$ come within reach of $B$,
 or again when $\limsup_{k \to \infty} \|R_k-R_{k-1}\|_F <2 \sigma_r^*$.
 \hfill $\square$ 
 \end{proof}

\begin{corollary}
{\bf (Feasible case for Cadzow)}.
Let $S^\sharp \in A \cap B$ be a Cadzow solution to the 
low rank structured approximation problem
{\rm (\ref{cadzow})}. There exists $\delta > 0$ such that every Cadzow sequence $S_k,R_k$ which enters the $\delta$-neighborhood of
$S^\sharp$
 converges to a Cadzow solution $S^* \in A \cap B$ with speed $O(k^{-\rho})$ for some $\rho > 0$. 
 \hfill $\square$
 \end{corollary}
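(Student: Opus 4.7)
The plan is to apply the local attraction theorem (Theorem \ref{theorem1}) in the feasible regime $r^* = 0$ to the degenerate gap $(\{S^\sharp\}, \{S^\sharp\}, 0)$. Since $S^\sharp \in A \cap B$, it is immediate that this is a valid gap (both projection identities reduce to $S^\sharp \in P_A(S^\sharp)\cap P_B(S^\sharp)$), and zero gaps with $\{S^\sharp\} \subset A \cap B$ are automatically saturated by the remark following Definition \ref{def_loja}. It therefore suffices to verify the angle condition and H\"older regularity on a neighborhood of $S^\sharp$, and then extract the rate.

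For the angle condition I invoke subanalyticity. The structure set $A$ is subanalytic by hypothesis, while the rank-$\le r$ set $B$ is cut out by the vanishing of all $(r+1)\times(r+1)$ minors and is thus a semi-algebraic determinantal variety. Consequently $f = i_A + \frac{1}{2} d_B^2$ is subanalytic (cf.\ \cite[Thm.~3]{noll}) and has $S^\sharp$ as a critical point, so by \cite[Thm.~3.1]{bolte1} it satisfies the \L ojasiewicz inequality there with some exponent $\theta \in [\frac{1}{2},1)$. Lemma \ref{second} then converts this into the angle condition with $\omega = 4\theta-2$ and some $\gamma > 0$ on a neighborhood $V$ of $S^\sharp$.

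For H\"older regularity I use that at a rank-$r$ matrix $S^\sharp$ the determinantal variety $B$ is locally a smooth submanifold of dimension $r(n+m-r)$ with positive reach $\sigma_r(S^\sharp) > 0$, hence prox-regular. Corollary \ref{non_shrinking} then delivers $\omega/2$-H\"older regularity of the zero gap with any preassigned constant $c < \gamma/2$. All hypotheses of Theorem \ref{theorem1} being met, there exists $\delta > 0$ such that every Cadzow sequence entering $\mathcal N(S^\sharp,\delta)$ satisfies $R_k \to R^*$ realizing the gap; because $r^* = 0$ we have $R^* \in A \cap B$, and the $r^*=0$ estimate of Corollary \ref{old} even gives $\|R_k-R^*\|_F = \|S_k-R^*\|_F = O(k^{-\rho})$ with $\rho=(1-\theta)/(2\theta-1)$, so $S^* := R^*$ is a Cadzow solution attained by both sequences at the claimed rate.

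The only delicate point, which I expect to be the main obstacle, is the case where $S^\sharp$ has rank strictly less than $r$. There $B$ is singular and prox-regularity fails at $S^\sharp$, so Corollary \ref{non_shrinking} is not directly available. The way out is to stratify $B$ by its rank strata and observe that any Cadzow sequence in a sufficiently small neighborhood of $S^\sharp$ either already satisfies $R_k = S^\sharp$ (trivial convergence) or produces, from some index onward, iterates $R_k$ lying on the regular rank-$r$ stratum, where the preceding prox-regularity argument applies. Carrying out this stratified reduction while keeping uniform constants in the angle and H\"older estimates is the technical heart of the proof.
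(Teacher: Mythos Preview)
Your main line is exactly the paper's intended one: the corollary carries no proof beyond the $\square$, and the surrounding text (proof of Theorem~\ref{cadzow1}, remark after the next corollary) makes clear that the feasible case is meant to be a direct instance of \cite[Thm.~1]{noll}, which you have correctly reconstructed via Lemma~\ref{second}, Corollary~\ref{non_shrinking}, Theorem~\ref{theorem1}, and Corollary~\ref{old}. For $\operatorname{rank}(S^\sharp)=r$ your argument is complete and matches the paper; a minor slip is that the saturatedness of zero gaps is the remark following the \emph{definition of saturated} in Section~\ref{sect_convergence}, not the one after Definition~\ref{def_loja}.

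You are right that the case $\operatorname{rank}(S^\sharp)<r$ is a genuine gap, one the paper simply does not address (the sentence ``$B$ is prox-regular'' in the proof of Theorem~\ref{cadzow1} is only valid at rank-$r$ points). However, your proposed stratified fix does not work as written. First, the dichotomy is wrong: $R_k\in P_B(S_k)$ lands on the rank-$r$ stratum exactly when $\operatorname{rank}(S_k)\ge r$; otherwise $R_k=S_k$, which need not equal $S^\sharp$, so the sequence can wander on lower strata without being trivially constant. Second, even when the $R_k$ do sit on the rank-$r$ stratum, the reach of $B$ at $R_k$ is $\sigma_r(R_k)$, and this tends to $0$ as $R_k\to S^\sharp$; so Corollary~\ref{non_shrinking} gives H\"older constants that blow up, and the uniform $c<\gamma/2$ required by Lemma~\ref{three-point} cannot be secured this way. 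A correct treatment of the singular case would need a direct verification of H\"older regularity (or of the three-point estimate) at singular points of the determinantal variety, which is substantially harder than what you outline. The cleanest honest statement is to add the hypothesis $\operatorname{rank}(S^\sharp)=r$, which is the generic case and the one the paper's argument actually covers.
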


 \begin{remark}
 In the case of Cadzow's basic sequence in Example \ref{basic}  it is important to be allowed the starting point $\widetilde{T}$, because we want a restoration $T^*\in A \cap B$ 
 close
 to $\widetilde{T}$. The method is a heuristic, because even in the case of convergence $T_k,S_k\to T^*$ we do not get the exact solution of (\ref{denoise}).
 Convergence to the projection of the initial guess  on $A \cap B$ is only obtained for alternating projections between affine subspaces \cite{bauschke-survey}. 
 \end{remark}
 
\begin{remark}
Even for affine $A$ our convergence result  is new,  while in the feasible case $r^*=0$ convergence is already affirmed by \cite{initial,noll},
even though there this was not stated explicitly for the Cadzow case. 
\end{remark}

\begin{remark}
Convergence claims for Cadzow's basic method, and for more general affine structures  $A$,  have been made repeatedly
in the literature. None of the published arguments the author is aware of are tenable. Most authors claim that convergence is linear and follows form
\cite{malick}. We show by way of an example that
this is incorrect, because \cite{malick} requires the manifolds to intersect transversally, and this fails in general.
\end{remark}

\begin{example}
\label{ex2}
Consider the set $B$ of $2 \times 2$ matrices of rank $\leq 1$,   $B_1$ those of rank equal 1, 
$B = B_1 \cup \{0_{2\times 2}\}$. Let $\bar{Y}= \begin{bmatrix} 1&-1\\2&-2\end{bmatrix}\in B_1$
and parametrize
$Y\in B_1$ in the neighborhood of $\bar{Y}$  by a vector $y=(y_1,y_2,y_3)\in \mathbb R^3$ as 
$Y = \begin{bmatrix} y_1 &y_1y_2 \\ y_3& y_3y_2\end{bmatrix}$, where
$\bar{y}=(1,-1,2)$ gives $\bar{Y}$.  Let $y(t)=(y_1(t),y_2(t),y_3(t))\in \mathbb R^3$ be a smooth curve with $y(0)=\bar{y}$, then
$Y(t) = \begin{bmatrix} y_1(t) & y_1(t)y_2(t) \\ y_3(t) & y_3(t)y_2(t) \end{bmatrix}\in B_1$ is a smooth surface curve on $B_1$
near $Y(0)=\bar{Y}$.  
Its tangent vector is $$\dot{Y}(t) = \begin{bmatrix} \dot{y}_1(t) & \dot{y}_1(t)y_2(t) + y_1(t) \dot{y}_2(t) \\ \dot{y}_3(t) & \dot{y}_3(t)y_2(t)+y_3(t)\dot{y}_2(t)\end{bmatrix},$$
hence $\dot{Y}(0)= \begin{bmatrix} \dot{y}_1(0) & -\dot{y}_1(0)+\dot{y}_2(0)\\ \dot{y}_3(0)& -\dot{y}_3(0)+2 \dot{y}_2(0)\end{bmatrix}$ is 
an element of the tangent space to $B_1$ at $\bar{Y}$. If we choose $\dot{y}_1(0)=\dot{y}_2(0)=\dot{y}_3(0)=1$, then $\dot{Y} = \begin{bmatrix} 1&0\\1&1\end{bmatrix}$
is a tangent direction to $B_1$ at $\bar{Y}$. We have
$\bar{Y} + t\dot{Y} \not \in B$ for $t \not= 0$, which corroborates that $B_1$ is curved. Now let
$A$ be the affine set 
$A = \bar{Y} + \mathbb R \dot{Y}$ which defines our structure, then $A\cap B = \{\bar{Y}\}$, and the intersection is tangential, because
$\dot{Y}$ belongs to both tangent spaces. Indeed, $T_{\bar{Y}}A = \mathbb R\dot{Y}$, hence
$T_{\bar{Y}}A + T_{\bar{Y}}B_1 = T_{\bar{Y}}B_1\not= \mathbb R^{2\times 2}$,  the latter
since the tangent space of $B_1$ at $\bar{Y}$ is not the full $\mathbb R^{2\times 2}$, given that dim$(B_1)=3$. Hence $A,B$ do not intersect transversally at 
$\bar{Y}$. 

Convergence for this simple example can be derived from \cite{noll}, but results based on transversality do not apply. In particular, convergence
to $\bar{Y}$ can be proved to be sublinear, as can be confirmed numerically.
\end{example}

\begin{example}
We may expand on Example \ref{ex2} by choosing a second smooth curve $z(t)$ with $z(0)=\bar{y}$, now with $\dot{z}_1(0)=1$, $\dot{z}_2(0)=-1$,
$\dot{z}_3(0)=0$, which gives a second tangent $\dot{Z}=\begin{bmatrix} 1&-2\\0&-2\end{bmatrix}$ to $B_1$ at $\bar{Y}$. Then
$A = \bar{Y} + \mathbb R\dot{Y}+\mathbb R\dot{Z}$ is two-dimensional, but still $T_{\bar{Y}}A \subset T_{\bar{Y}}B_1$, and the intersection is again 
tangential with $A \cap B = \{\bar{Y}\}$.
\end{example}

 \begin{example}
 It should also be stressed that one has to assume that the Cadzow alternating sequence
 is bounded, because the low rank set $B$ may have asymptotes, so that Cadzow iterates may escape to infinity.
 We give an example again for $B \subset \mathbb R^{2 \times 2}$.  Choose the affine structure
 $A = \{ S \in \mathbb R^{2 \times 2}: S_{12}=S_{21} = 1, S_{22}=0\}$, then the Cadzow alternating sequence,
 started at $S^0$ with $S^0_{11}=1$
 produces $S^k\in A$ where $S^k_{11} \to \infty$, as can also be verified numerically. 
 \end{example}

\end{document}